\numberwithin{equation}{section}
\newtheorem{theorem}[equation]{Theorem}
\newtheorem{lemma}[equation]{Lemma}
\newtheorem{corollary}[equation]{Corollary}
\newtheorem{cordef}[equation]{Corollary/Definition}
\newtheorem{proposition}[equation]{Proposition}
\newtheorem{propdef}[equation]{Proposition/Definition}
\newtheorem*{nothm}{Theorem}
\theoremstyle{definition}
\newtheorem{definition}[equation]{Definition}
\theoremstyle{remark}
\newtheorem{remark}[equation]{Remark}
\DeclareMathOperator{\diminj}{diminj}
\DeclareMathOperator{\id}{id}
\DeclareMathOperator{\Hom}{Hom}
\DeclareMathOperator{\Spec}{Spec}
\def \AA {\mathbb{A}}
\def \CC {\mathbb{C}}
\def \NN {\mathbb{N}}
\def \ZZ {\mathbb{Z}}
\def \Fcal {\mathcal{F}}
\def \mfr {\mathfrak{m}}
\def \pfr {\mathfrak{p}}
\def \qfr {\mathfrak{q}}
\def \Ascr {\mathscr{A}}
\def \Bscr {\mathscr{B}}
\def \Fscr {\mathscr{F}}
\def \Oscr {\mathscr{O}}
\def \Pbar {\bar{P}}
\def \Xbar {\bar{X}}
\def \fbar {\bar{f}}
\def \gbar {\bar{g}}
\def \hbar {\bar{h}}
\def \ybar {\bar{y}}
\def \Htilde {\tilde{H}}
\def \mono  {\hookrightarrow}
\def \isom  {\stackrel{\sim}{\rightarrow}}
\newcommand{\restr}[2]{{#1}\raise-.5ex\hbox{\ensuremath|}_{#2}}
\newcounter{subenvcounter}
\newenvironment{subenv}{%
 \begin{list}
  {\em (\arabic{subenvcounter})}
  {\setlength{\leftmargin}{20pt}
   \setlength{\rightmargin}{0pt}
   \setlength{\itemindent}{0pt}
   \setlength{\labelsep}{5pt}
   \setlength{\labelwidth}{13pt}
   \setlength{\listparindent}{\parindent}
   \setlength{\parsep}{0pt}
   \setlength{\itemsep}{0pt}
   \setlength{\topsep}{-\parskip}
   \usecounter{subenvcounter}}}
  {\end{list}}
\newcounter{asslistcounter}
\newenvironment{assertionlist}{
 \begin{list}
  {\upshape (\alph{asslistcounter})}
  {\setlength{\leftmargin}{18pt}
   \setlength{\rightmargin}{0pt}
   \setlength{\itemindent}{0pt}
   \setlength{\labelsep}{5pt}
   \setlength{\labelwidth}{13pt}
   \setlength{\listparindent}{\parindent}
   \setlength{\parsep}{0pt}
   \setlength{\itemsep}{0pt}
   \setlength{\topsep}{-.5\parskip}
   \usecounter{asslistcounter}}}
  {\end{list}}
\newenvironment{bulletlist}{
 \begin{list}
  {$\bullet$}
  {\setlength{\leftmargin}{18pt}
   \setlength{\rightmargin}{0pt}
   \setlength{\itemindent}{0pt}
   \setlength{\labelsep}{5pt}
   \setlength{\labelwidth}{13pt}
   \setlength{\listparindent}{\parindent}
   \setlength{\parsep}{0pt}
   \setlength{\itemsep}{0pt}
   \setlength{\topsep}{-.5\parskip}
   \usecounter{bulllistcounter}}}
 {\end{list}}
\author[P.~Hamacher]{Paul Hamacher}
\address{Paul Hamacher\\
Technische Universit\"at M\"unchen\\
Zentrum Mathematik - M 11\
Boltzmannstra{\ss}e 3\\
85748 Garching\\
Deutschland}
\email{hamacher@ma.tum.de}
\def \an {{\mathrm {an}}}
\def \et {{\mathrm{\acute{e}t}}}
\DeclareMathOperator{\qIsol}{qIsol}
\DeclareMathOperator{\LocIsom}{LocIsom}
\DeclareMathOperator{\Tot}{Tot}
\DeclareMathOperator{\Top}{Top}
\def \L {{\mathrm{L}}}
\def \R {{\mathrm{R}}}
\def \D {{\mathrm{D}}}
\def \H {{\mathrm{H}}}
\title[Cohomology with compact support]{On the generalisation of cohomology with compact support to non-finite type schemes}
\begin{document}

 \begin{abstract}
 {In this article we extend Deligne's construction of Grothendieck's six operations on the derived category of torsion sheaves over the \'etale site of a scheme for morphisms of finite type to a larger class of morphisms. This class includes profinite \'etale coverings as well as separated morphisms perfectly of finite type}
 \end{abstract}
 
 \maketitle
 
\section*{Introduction}
 
 In the recent years there has been a trend in arithmetic algebraic geometry to work directly with geometric objects which are not of finite type. The most popular example of this are Scholze's perfectoid spaces, which form a family of ``infinite'' objects such that any analytic adic space over $\ZZ_p$ has a pro\'etale cover which is a perfectoid space. The most important instances of perfectoid spaces, from the view of the Langlands program, are (adic) infinite level Shimura varieties and their local analogues. For schemes this developement has taken place on a smaller scale; examples are the work about perfect schemes with application to the affine flag variety of Bhatt and Scholze (\cite{BhattScholze:AffGr}), as well as their work on pro\'etale morphisms (\cite{BhattScholze:ProEtTop}).
 
 While we have a definition of cohomology of \'etale cohomology with compact support for any adic space by Huber's work \cite{Huber:EtCohBook}, we can only define the \'etale cohomology group with compact support of a scheme (or more generally the direct image with compact support of a morphism) if it is of finite type. The aim of this paper is to extend the definition to a larger class of morphisms which we will call of finite expansion. This class will as include profinite \'etale coverings as well as perfections.

 Our construction follows the general framework of Deligne in \cite{SGA4.3}. Given a compactification of a separated morphism of finite extension, that is a a decomposition $f = p \circ j$ with $j$ an open immersion and $p$ separated universally closed, we define the higher direct image of compact support of $f$ as
 \[
  \R f_! \coloneqq \R p_\ast \circ j_!.
 \] 
Imposing some mild conditions on the source and target of $f$, this functor is well-defined. More precisely, we prove the following results.

 \begin{nothm}
  Let $S$ be a scheme and $\Ascr$ be a torsion sheaf on the \'etale site of $S$. Let $(S)$ denote the category whose objects are qcqs schemes over $S$ and whose morphisms are separated morphism of schemes of finite expansion.
  \begin{subenv}
   \item There is an essentially unique way to assign to every morphism $f\colon X \to Y$ a functor $\R f_!\colon \D(X,\Ascr_X) \to \D(Y,\Ascr_Y) $ such that $\R f_! = \R f_\ast$ if $f$ is universally closed, $\R f_! = f_!$ if $f$ is an open immersion and such there is a collection of isomorphisms $\R (f \circ g)_! \cong \R f_! \circ \R g_!$ satisfying the usual cocycle condition (see Theorem~\ref{thm-lowershriek} for details).
   \item The functor $\R f_!$ has a partial right adjoint $\R f^! \colon \D^+(Y,\Ascr_Y) \to \D^+(X,\Ascr_X)$. The formalism of Grothendieck's six operations for morphisms of finite type extends to $(S)$ (see sections~\ref{sect-bc} and \ref{sect-excinvim} for details).
  \end{subenv}
 \end{nothm}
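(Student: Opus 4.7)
The plan is to follow Deligne's framework exactly: the axioms listed in (i) force $\R f_! \coloneqq \R p_\ast \circ j_!$ on any compactification $f = p \circ j$, so the content is to produce such compactifications, establish independence from the choice, and verify the cocycle condition. I would reduce the whole theorem to three subproblems: \textbf{(a)} a Nagata-type compactification theorem for separated morphisms of finite expansion between qcqs $S$-schemes, \textbf{(b)} proper base change for torsion sheaves along separated universally closed morphisms of finite expansion, and \textbf{(c)} an independence/uniqueness statement comparing two compactifications by dominating them via the closure of the diagonal.

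Step (a) is what I expect to be the principal obstacle, since it demands extending Nagata outside of finite type. I would first express every morphism of finite expansion as a cofiltered inverse limit $X = \varprojlim X_i$ of separated finite type $Y$-schemes with affine transition maps; this is where the classes ``profinite \'etale'' and ``perfection'' are absorbed into a common framework. Applying classical Nagata (L\"utkebohmert/Conrad) at each level produces open immersions $X_i \hookrightarrow \overline{X}_i$ into proper $Y$-schemes; by passing to scheme-theoretic closures of $X$ in successive products I would arrange a cofinal compatible system of such compactifications, then set $\overline{X} \coloneqq \varprojlim \overline{X}_i$. The qcqs limit formalism of EGA IV \S 8 then gives that $\overline{X} \to Y$ is separated universally closed and that $X \to \overline{X}$ is an open immersion; preservation of universal closedness along the affine transition system is the technical heart here.

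Step (b), with $p$ outside the finite type world, is handled by the same limit presentation $p = \varprojlim p_i$ with each $p_i$ proper of finite type: I would show that $\R p_\ast$ is computed as a derived colimit of pullbacks of $\R p_{i,\ast}$, which reduces proper base change to its classical form. Step (c) then proceeds as in Deligne: two compactifications are dominated by the closure of the diagonal of $X$ in their fibre product, and the comparison map between the resulting candidate functors is an isomorphism by (b). Given (a)--(c), the functor $\R f_!$ is well-defined, and the cocycle isomorphism $\R (f \circ g)_! \cong \R f_! \circ \R g_!$ is obtained by composing compactifications of $f$ and $g$ via the standard graph construction and invoking the uniqueness in (c).

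For part (ii), I would construct the partial right adjoint by setting $\R f^! \coloneqq j^\ast \circ p^!$ on a compactification $f = p \circ j$, where $p^!$ for a separated universally closed morphism of finite expansion is produced on $\D^+$ either by Brown representability (once $\R p_\ast$ is shown to commute with the relevant coproducts) or by transport from the limit presentation using Verdier duality in the proper case. Independence of the factorisation follows from the same diagonal-domination argument as in (c). The remaining pieces of the six operations formalism (base change, projection formula, excisive inverse image) then reduce to the corresponding statements in the finite type case via the limit presentation, with compatibility secured by the colimit behaviour of $\R p_\ast$ and $j_!$ already established in the proofs of (a) and (b).
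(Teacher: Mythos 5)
Your overall architecture is the same as the paper's (compactify, prove base change along separated universally closed morphisms, feed everything into Deligne's gluing formalism from SGA~4 Exp.~XVII \S 3), but your step (a) --- which you correctly identify as the principal obstacle --- has a genuine gap. Taking scheme-theoretic closures of $X_{i'}$ in products $\overline{X}_i \times_Y \overline{X}_{i'}$ gives transition maps $\overline{X}_{i'} \to \overline{X}_i$ that are proper but not affine, so $\varprojlim \overline{X}_i$ need not exist as a scheme; and even after arranging affine transition maps, a cofiltered limit of open immersions $X_i \hookrightarrow \overline{X}_i$ is only a pro-open subscheme (a cofiltered intersection of opens), not an open immersion, unless each $X_{i'}$ is the \emph{full} preimage of $X_i$ in $\overline{X}_{i'}$ --- which your construction does not guarantee. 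The EGA~IV \S 8 formalism does not give ``open immersion in the limit'' for free. The missing idea, which is the genuinely new input of the paper, is a Zariski-Main-Theorem-type statement: after using Temkin's factorisation of $f$ into (integral) $\circ$ (finite presentation) and applying Nagata only to the finite-presentation part, one is left with an affine quasi-profinite morphism, and the paper proves that the relative normalisation of the target in the source contains the source as an open subscheme (precisely the locus of points quasi-isolated in their fibres, characterised via the limit description of Lemma~\ref{lemma quasi-isolated points}). That single normalisation replaces your inverse system of compactifications and is what actually produces the open immersion into a universally closed $Y$-scheme.

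Two smaller comparisons. For step (b), your reduction of universally closed base change to the proper finite type case via $p = \varprojlim p_i$ and commutation of cohomology with filtered colimits of sheaves is a viable alternative; the paper instead reduces to a strictly Henselian local base and invokes Gabber's affine analogue of proper base change for the Henselian pair $(A, \mathfrak{m}A)$ arising from the integral part, which is more direct and avoids having to check compatibility of the base change maps with the colimit presentation of an arbitrary torsion sheaf. For part (ii), Brown representability on $\D^+$ is not available ($\D^+$ is not cocomplete), and passing through the unbounded category would still require the cohomological dimension bound $\mathrm{cd}(\R f_!) \leq 2d$; the paper follows Deligne's explicit route, replacing $\R f_!$ by the complex of exact functors $p_\ast \tau_{\leq 2d} C_\ell^\ast j_!$ and taking termwise right adjoints, which is what makes the adjoint land in $\D^+$ with the stated amplitude. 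You should also note that independence of the compactification in (c) is not proved by a bare diagonal-domination argument alone: the comparison isomorphism $d\colon j_! \R p'_\ast \to \R p_\ast j'_!$ for a square of compactifications is exactly where the universally closed base change theorem of (b) enters, via the vanishing of $\R p_\ast j'_! K$ on the boundary.
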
 

 For separated $\CC$-schemes of finite expansion the above notion of $\R f_!$ coincides with the topological direct image functor with proper support. More precisely, the following generalisation of Artin's comparison theorem holds.
 
 \begin{nothm}
  Let $f\colon X \to Y$ be a morphism of separated $\CC$-schemes of finite expansion, $\Fscr \in \D^+_{\rm tors}(X,\ZZ_X)$ and denote by $(-)_{\rm an}$ the analytification functor. Then there exists a natural isomorphism $(\R f_!\Fscr)_{\rm an} \cong \R f_{\an, !} \Fscr_{\rm an}$ (see section~\ref{sect-comparison-Artin} for details).
 \end{nothm}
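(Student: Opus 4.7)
The plan is to reduce to the classical Artin comparison theorem for morphisms of finite type via the compactification formalism established in the first theorem. Pick a factorisation $f = p \circ j$ with $j$ an open immersion and $p$ separated and universally closed, available by the definition of finite expansion; then $\R f_! \Fscr = \R p_\ast j_! \Fscr$ algebraically and $\R f_{\an, !} \Fscr_{\rm an} = \R p_{\an, \ast} j_{\an, !} \Fscr_{\rm an}$ analytically. Since analytification is exact and commutes canonically with open immersions and with extension by zero, the comparison $(j_! \Fscr)_{\rm an} \cong j_{\an, !} \Fscr_{\rm an}$ is essentially formal and may be checked stalkwise. The problem reduces to constructing, naturally in $p$ and in $\Gscr \in \D^+_{\rm tors}$, a comparison $(\R p_\ast \Gscr)_{\rm an} \cong \R p_{\an, \ast} \Gscr_{\rm an}$ for separated universally closed morphisms $p$ of finite expansion.

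For such $p$, the plan is to use the definition of finite expansion to present $p = \varprojlim_\alpha p_\alpha$ as a cofiltered limit of proper morphisms $p_\alpha \colon X_\alpha \to Y$ of finite type with affine transition maps. On the algebraic side, the continuity statement for \'etale cohomology along affine cofiltered limits (SGA~4, Exp.~VII) identifies $\R p_\ast \Gscr$ with a filtered colimit built from the $\R p_{\alpha, \ast} \Gscr_\alpha$, where the $\Gscr_\alpha$ are descents of $\Gscr$ to finite levels; a preliminary reduction to constructible $\Gscr$ is needed here, and is legitimate because $\Gscr$ is bounded below and torsion. For each fixed $\alpha$, Artin's classical comparison theorem applied to the proper morphism $p_\alpha$ supplies a natural isomorphism $(\R p_{\alpha, \ast} \Gscr_\alpha)_{\rm an} \cong \R p_{\alpha, \an, \ast} \Gscr_{\alpha, \rm an}$.

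The main obstacle, and the place where the hypothesis of finite expansion does genuine work on the analytic side, is the analytic counterpart of this continuity: one must show that $\R p_{\an, \ast} \Gscr_{\rm an}$ is itself the corresponding filtered colimit of the $\R p_{\alpha, \an, \ast} \Gscr_{\alpha, \rm an}$. I would verify this stalkwise. At a point $y \in Y_{\rm an}$, the map $p_{\an}$ is proper as a map of topological spaces: it is closed, being the analytification of a universally closed morphism, and the fibre $p_{\an}^{-1}(y)$ is compact, being the inverse limit of the compact Hausdorff spaces $p_{\alpha, \an}^{-1}(y)$ under proper transitions. Topological proper base change then computes the stalk of $\R p_{\an, \ast} \Gscr_{\rm an}$ at $y$ as $\R\Gamma(p_{\an}^{-1}(y), \Gscr_{\rm an}|_{p_{\an}^{-1}(y)})$, and since $p_{\an}^{-1}(y) = \varprojlim_\alpha p_{\alpha, \an}^{-1}(y)$, the topological analogue of SGA~4 continuity — commutation of sheaf cohomology of torsion sheaves with cofiltered limits of compact Hausdorff spaces along proper transition maps — identifies this with the filtered colimit of $\R\Gamma(p_{\alpha, \an}^{-1}(y), \Gscr_{\alpha, \rm an}|_{p_{\alpha, \an}^{-1}(y)})$. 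Combined with Artin's theorem level by level, this yields the comparison.

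Independence of the chosen compactification and compatibility with composition then follow by transporting the comparison through the cocycle isomorphisms from the first theorem, so that the isomorphism assembles into a natural transformation of the two functors $\D^+_{\rm tors} \to \D^+_{\rm tors}$ associated to $f$.
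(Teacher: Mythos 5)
Your overall strategy is the same as the paper's: factor $f = p\circ j$, dispose of the open immersion formally, reduce the universally closed case to constructible coefficients, present $p$ as a cofiltered limit of proper morphisms $p_\alpha\colon X_\alpha \to Y$ with finite (affine) transition maps, and pass to the limit using continuity of cohomology on both the \'etale and the topological side before invoking Artin. Your stalkwise verification of the analytic continuity statement (topological proper base change plus commutation of cohomology of torsion sheaves with cofiltered limits of compact Hausdorff spaces) is a legitimate, more hands-on substitute for the paper's direct appeal to the topos-theoretic limit theorem of SGA~4, Exp.~VI.

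There is, however, one genuine gap: the step ``for each fixed $\alpha$, Artin's classical comparison theorem applied to the proper morphism $p_\alpha$'' does not go through as stated, because the target $Y$ of $p_\alpha$ is only of finite expansion over $\CC$, not of finite type, and the classical theorem is a statement about morphisms of finite type $\CC$-schemes. This is not a pedantic point --- it is exactly the place where the paper inserts an extra reduction: one descends the proper morphism $p_\alpha$ and the bounded constructible complex to a Cartesian square over finite type $\CC$-schemes (possible since $Y$ is itself a limit of finite type schemes and $p_\alpha$ is finitely presented), applies Artin's theorem there, and then transfers the isomorphism back using the proper base change theorem in \'etale cohomology on one side and the topological proper base change theorem on the other. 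Without this intermediate descent your level-by-level comparison has nothing to stand on. The fix is of the same flavour as the limit arguments you already use, so the proof is repairable, but as written the key input is being applied outside its domain of validity. A smaller remark in the same spirit: the closedness of $p_{\an}$ is not an automatic consequence of universal closedness of $p$; it too requires the presentation of $p$ as a limit of proper morphisms (whose analytifications are proper) together with the compactness of the limit fibres, which you do have at hand.
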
 

 We give a brief outline of the article. In sections~\ref{sect basic properties} and \ref{sect-ZMT}, we define morphisms of finite expansion and prove some of their properties; in particular we show that any separated morphism of finite expansion can be written as the composition of an open embedding and a universally closed morphism. In section~\ref{sect-univclosedbc}, we prove that the proper base change theorem extends to the case of separated universally closed morphisms, which is the main ingredient for the proof that $\R f_!$ does not depend on the choice of compactification. Then we use Deligne's formal construction from \cite[Exp.~XVII, \S~5]{SGA4.3} to put the pieces together and prove the well-definedness of $\R f_!$ in section~\ref{sect-lowershriek}. The formalism of Grothendieck's six operations can now deduced by the same arguments as in \cite[Exp.~XVII, XVIII]{SGA4.3}. We give brief sketch of the proofs in sections~\ref{sect-bc} and \ref{sect-excinvim} to convince the reader that Deligne's proofs still apply. We then show in section~\ref{sect-comparison-ad-hoc} that our definition coincides with the ad-hoc definitions used so far in the literature. Finally, in section~\ref{sect-comparison-Artin} we recall the analytification functor for complex schemes (which may not be of finite type) and extend Artin's comparison theorem for compact support cohomology to complex schemes of finite expansion.

{\bf Acknowledgements:} I am very grateful to Daniel Harrer for his helpful remarks and his explanations about derived categories. Furthermore I thank Timo Richarz, Ulrich Bauer and Kai Behrens for helpful discussions. The author was partially supported by the ERC Consolidator Grant "Newton strata".

\section{Morphisms of finite expansion}
In this section we introduce morphisms of finite expansion, which can be seen as relaxing the finiteness conditions of morphisms of finite type a bit. Building upon the analogous results for morphisms of finite type, we prove under a mild condition that they are compactifiable, i.e.\ can be written as  a composition of an open immersion and a universally closed separated morphism and study the behaviour of \'etale cohomology with respect to morphisms of finite expansion.

\subsection{Basic definitions and properties} \label{sect basic properties}
The key definition is:
 
\begin{definition}
 Let $R$ be a ring and $A$ be an $R$-algebra. A family $(a_i)_{i \in I}$ of elements in $A$ is called quasi-generating system of $A$, if $A$ is integral over $R[a_i \mid i\in I]$; the $a_i$ are called quasi-generators. If there exists a finite quasi-generating system of $A$, we say that $A$ is of finite expansion over $R$.
\end{definition}

\begin{remark}
 Alternatively we could say an $R$-algebra $A$ is of finite expansion if and only if there exists an integral morphism $R[x_1,\ldots, x_n] \to A$. In particular, the structure morphism decomposes into a morphism of finite presentation and an integral morphism.
\end{remark}

\begin{lemma}
 We fix a ring $R$ and an $R$-algebra $A$ be an $R$-algebra.
 \begin{subenv}
  \item Let $B$ be an $A$-algebra and assume that $A$ is of finite expansion over $R$. Then $B$ is of finite expansion over $R$ if and only if it is of finite expansion over $A$.
  \item Let $f_1,\ldots,f_m \in A$ such that $(f_1,\ldots,f_m)_A = A$. Then $A$ is of finite expansion over $R$ if and only if $A_{f_i}$ is of finite expansion over $R$ for every $i$.
  \item Let $R'$ be another $R$-algebra and assume that $A$ is of finite expansion over $R$. Then $R' \otimes_R A$ is finite expansion over $R'$.
  \item Let $R'$ be a faithfully flat $R$-algebra and assume that $R' \otimes A$ is of finite expansion over $R'$. Then $A$ is of finite expansion over $R$.
 \end{subenv}
\end{lemma}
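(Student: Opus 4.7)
I would base the proof on the following reformulation, immediate from the definition and the remark preceding the lemma: $A$ is of finite expansion over $R$ if and only if there is a subring $B \subseteq A$ that is of finite type over $R$ and over which $A$ is integral. With this reformulation, each of the four assertions reduces to a combination of transitivity of integrality with the analogous statement for ``of finite type'' or ``integral''.

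For (1), if $A$ is integral over a finite-type $A_0 = R[a_1,\ldots,a_n] \subseteq A$ and $B$ is integral over an $A$-finite-type $A[c_1,\ldots,c_m] \subseteq B$, I would take $B_0 \coloneqq R[a_1,\ldots,a_n,c_1,\ldots,c_m] \subseteq B$: the image of $A$ in $B$ is integral over $B_0$ (since $A$ is integral over $A_0$), hence so is $A[c_j]$ (integral elements form a ring), and by transitivity $B$ is integral over $B_0$. Conversely, if $B$ is integral over a finite-type $R$-subring $B_0 \subseteq B$, then $B$ is integral over $A[B_0] \subseteq B$, which is $A$-finite-type --- no hypothesis on $A$ is needed for this direction.

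For (2), the forward direction is routine: enlarge a witness $B \subseteq A$ to contain the $f_i$ and localize. The converse, which I expect to be the principal obstacle, proceeds by taking witnesses $B_i \subseteq A_{f_i}$, clearing denominators in a finite list of $R$-algebra generators of $B_i$ to produce $a_{ij} \in A$, and setting $B \coloneqq R[a_{ij}, f_i, g_i] \subseteq A$, where $1 = \sum g_i f_i$ in $A$. Then $B_i \subseteq B_{f_i}$ and $(f_i)B = B$. The key step is a local-to-global integrality lemma: under these conditions, $A$ is integral over $B$. To prove it, for any $a \in A$, I would clear denominators in a monic relation for $a$ over $B_{f_i}$ and multiply by a further power of $f_i$ to obtain a relation $f_i^{E_i} a^{d_i} + \sum_{j<d_i} e_{ij} a^j = 0$ in $A$ with $e_{ij} \in B$. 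Multiplying by $a^{D-d_i}$ with $D = \max_i d_i$ yields $f_i^{E_i} a^D \in M \coloneqq \sum_{k<D} B a^k$; taking $E = \max_i E_i$ and using $(f_1^E,\ldots,f_m^E)B = B$, I would conclude $a^D \in M$, so that $a$ satisfies a monic polynomial of degree $D$ over $B$.

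For (3), if $B \subseteq A$ is a finite-type $R$-subring over which $A$ is integral, then $R'\otimes_R B$ is finite-type over $R'$, and $R'\otimes_R A$ is integral over $R'\otimes_R B$ (monic relations are preserved under ring maps, and integral elements form a subring, so the $R'$-linear span of the image of $A$ consists of integral elements). For (4), I would take a finite-type $R'$-subring $C' \subseteq R'\otimes_R A$ over which $R'\otimes_R A$ is integral, write its generators as $R'$-combinations $\sum_j r'_{kj}\otimes a_{kj}$ with $a_{kj}\in A$, and set $B \coloneqq R[a_{kj}] \subseteq A$. Then $C' \subseteq R'\otimes_R B$, so $R'\otimes_R A$ is integral over $R'\otimes_R B$. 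For any $a \in A$, the identification $R'\otimes_R B[a] = (R'\otimes_R B)[1\otimes a]$ shows this is a finite $R'\otimes_R B$-module (since $1\otimes a$ is integral over $R'\otimes_R B$); by faithfully flat descent of finite generation of modules along $B \to R'\otimes_R B$, $B[a]$ is a finite $B$-module, and $a$ is integral over $B$. Hence $A$ is integral over the finite-type $R$-subring $B$, completing the proof.
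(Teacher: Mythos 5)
Your proof is correct and takes essentially the same route as the paper's: reformulate ``finite expansion'' as ``integral over a finite-type $R$-subalgebra'' and reduce each part to the corresponding permanence property of finite-type and integral ring maps, choosing the auxiliary subalgebra $B$ exactly as the paper does in parts (2) and (4). The only difference is that you spell out two steps the paper leaves implicit --- the local-to-global integrality argument in (2) and the faithfully flat descent of integrality in (4) --- and both are handled correctly.
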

\begin{proof}
  The ``only if'' part of the first assertion is obvious. To show the other direction, let $a_1,\ldots,a_m \in A$ and $b_1,\ldots,b_n \in B$ be quasi-generators over $R$ and $A$, respectively. Then $A[b_1,\ldots,b_n]$ is integral over $R[a_1,\ldots,a_m, b_1,\ldots,b_n]$, thus $B$ is integral over $R[a_1,\ldots,a_m, b_1,\ldots,b_n]$.

 The 'only if' part of the second assertion follows from the first part. Now if $A_{f_i}$ are of finite expansion over $R$, we choose finite systems of quasi-generators $b_{i,j} = a_{i,j}/f_i^{N_{i,j}}$. Denote by $B \subset A$ the subalgebra generated by all $a_{i,j}$ and $f_i$. Then $R[\{b_{i,j}\}_j] \subset B_{f_i}$ and thus $A_{f_i}$ is integral over $B_{f_i}$ for all $i$. Hence $A$ is integral over $B$.
 
 The third assertion follows as both properties, being of finite type and being integral, are preserved under tensor products. Now let $R \to R'$ be faithfully flat and fix a finite system of quasi-generators $m_i = \sum a_{i,j} \otimes r_{i,j}' $ of $R' \otimes_R A$ over $R'$. Let $B \subset A$ be the subalgebra generated by all $a_{i,j}$. Since $R'[\{m_i\}_i] \subseteq R'[\{1 \otimes a_{i,j}\}_{i,j}] = R' \otimes_R B$, the ring extension $R' \otimes_R B \subset R' \otimes_R A$ is integral. Thus $A$ is integral over $B$ by faithfully flat descent.
\end{proof}

\begin{cordef}
 We call a morphism $f\colon X \to Y$ of schemes locally of finite expansion if the following equivalent conditions are satisfied.
 \begin{assertionlist}
  \item For every affine open subscheme $V \subset Y$ and every affine open subscheme $U \subset f^{-1}(V)$, the $\Oscr_Y(V)$-algebra $\Oscr_X(U)$ is of finite expansion.
  \item There exists a covering $Y = \bigcup V_i$ by open affine subschemes $V_i \cong \Spec R_i$ and a covering $f^{-1}(V_i) = \bigcup U_{i,j}$ by open affine subschemes $U_{i,j} \cong \Spec A_{i,j}$ such that for all $i,j$ the $R_i$-algebra $A_{i,j}$ is of finite expansion.
 \end{assertionlist}
 We say that a morphism $f\colon X \to Y$ is of finite expansion if it is locally of finite expansion and quasi-compact.
\end{cordef}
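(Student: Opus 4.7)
The direction (a)$\Rightarrow$(b) is immediate: given (a), any affine cover $\{V_i\}$ of $Y$ together with any affine cover $\{U_{i,j}\}$ of each $f^{-1}(V_i)$ witnesses (b). The substance is in the converse, and the plan is to reduce it to assertions (1) and (2) of the preceding lemma.

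For (b)$\Rightarrow$(a), fix an arbitrary affine open $V = \Spec R \subset Y$ and an arbitrary affine open $U = \Spec A \subset f^{-1}(V)$; the goal is to show that $A$ is of finite expansion over $R$. By quasi-compactness of $U$ combined with assertion (2), it will suffice to cover $U$ by finitely many principal opens $D_U(a_k)$ such that each $A_{a_k}$ is of finite expansion over $R$. I plan to produce such a cover pointwise: for each $u \in U$ with image $y = f(u) \in V$, first choose $i$ with $y \in V_i$ and $j$ with $u \in U_{i,j}$, and then invoke the standard fact that any point of the intersection of two affine opens of a scheme has a neighbourhood principal in both --- applied first to $V$ and $V_i$ to produce $h \in R$, $h' \in R_i$ with $y \in D_V(h) = D_{V_i}(h') \subset V \cap V_i$, and then to $U$ and $U_{i,j}$ to produce $a \in A$, $a' \in A_{i,j}$ with $u \in D_U(a) = D_{U_{i,j}}(a') \subset U \cap U_{i,j} \cap f^{-1}(D_V(h))$.

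Given these choices, $A_a = (A_{i,j})_{a'}$, and the inclusion $D_U(a) \subset f^{-1}(D_V(h))$ forces the structure map $R_i \to A_a$ to factor through $R_h = (R_i)_{h'}$. Finite expansion of $A_a$ over $R$ will then follow from three applications of assertion (1), i.e.\ transitivity of finite expansion: since localization at a single element is of finite type (hence of finite expansion), combining $A_a$ finite expansion over $A_{i,j}$ with the hypothesis $A_{i,j}$ finite expansion over $R_i$ gives $A_a$ finite expansion over $R_i$; then the tower $R_i \to R_h \to A_a$ upgrades this to $A_a$ finite expansion over $R_h$; and finally the tower $R \to R_h \to A_a$ yields $A_a$ finite expansion over $R$. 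Extracting a finite subcover and invoking (2) concludes.

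The only non-formal ingredient is the geometric lemma on opens principal in two containing affines; it is entirely standard but is what bridges the indexed data of (b) and the arbitrary affines appearing in (a). Everything else is bookkeeping with transitivity of finite expansion and the local-on-the-source criterion, both already at our disposal in the preceding lemma.
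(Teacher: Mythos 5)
Your argument is correct and is exactly the intended derivation: the paper states this Corollary/Definition without proof as a consequence of the preceding lemma, and your reduction of (b)$\Rightarrow$(a) to parts (1) and (2) of that lemma via opens principal in both affines is the standard affine-communication argument that is being tacitly invoked. All the individual steps (transitivity along the towers $R_i \to R_h \to A_a$ and $R \to R_h \to A_a$, and the finite principal cover feeding into part (2)) check out against the lemma as stated.
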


\begin{corollary}
 \begin{subenv}
  \item The properties ``locally of finite expansion'' and ``of finite expansion'' of morphisms of schemes are stable under composition, base change and faithfully flat descent, and are local on the target. The property ``of finite expansion'' is also local on the source.
  \item Let $f\colon X \to Y$ and $g\colon Y \to Z$ be morphisms of schemes. If $g \circ f$ is locally of finite expansion (resp.\ of finite expansion and $g$ is quasi-separated), then $f$ is locally of finite expansion (resp.\ of finite expansion).
 \end{subenv}
\end{corollary}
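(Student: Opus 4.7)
The plan is to reduce every statement to the preceding ring-theoretic Lemma, in direct analogy with the treatment of (locally) finite type morphisms in EGA~IV. For ``locally of finite expansion'', the condition is affine-local by the equivalence (a)$\Leftrightarrow$(b) of the Corollary/Definition, so I verify each stability statement on affine charts. Composition corresponds to part~(1) of the Lemma applied to a tower $\Oscr_Z(W) \to \Oscr_Y(V) \to \Oscr_X(U)$ obtained from nested affine opens $U \subseteq f^{-1}(V) \subseteq (gf)^{-1}(W)$; locality on the target is built into the equivalence itself (with part~(2) of the Lemma used to pass between two affine covers via a common refinement by distinguished opens); base change and faithfully flat descent correspond to parts~(3) and (4), respectively. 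To upgrade from ``locally of finite expansion'' to ``of finite expansion'' I combine each of these with the well-known analogous stability properties of quasi-compactness, and I note that locality on the source for ``of finite expansion'' uses only that a finite open cover $X = \bigcup U_i$ with each $U_i \to Y$ quasi-compact yields a quasi-compact $f\colon X \to Y$.

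For assertion (2), I pick $x \in X$ and set $y = f(x)$, $z = g(y)$. Choose affine neighbourhoods $W$ of $z$ in $Z$, $V$ of $y$ in $g^{-1}(W)$, and $U$ of $x$ in $f^{-1}(V)$. The hypothesis that $g \circ f$ is locally of finite expansion produces elements $a_1,\ldots,a_n \in \Oscr_X(U)$ such that $\Oscr_X(U)$ is integral over $\Oscr_Z(W)[a_1,\ldots,a_n]$. Since $\Oscr_Z(W)[a_1,\ldots,a_n] \subseteq \Oscr_Y(V)[a_1,\ldots,a_n]$, the same integrality persists over the larger ring, so $f$ is locally of finite expansion at $x$. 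For the parenthetical refinement, the only additional input is the classical fact that if $g$ is quasi-separated and $g \circ f$ is quasi-compact, then $f$ is quasi-compact.

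There is no serious obstacle. The Lemma has been crafted precisely so that its four parts correspond one-to-one with the four properties to be verified, and the only bookkeeping nuisance is ensuring, when passing between the characterisations (a) and (b) of the Corollary/Definition, that two affine covers of the target can be interpolated via a common refinement---which is exactly where part~(2) of the Lemma intervenes. All remaining arguments are formal transcriptions of the standard finite type proofs.
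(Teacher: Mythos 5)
Your proposal is correct and is precisely the intended argument: the paper states this as a corollary with no written proof because it is meant to follow by reducing, on affine charts via the equivalence (a)$\Leftrightarrow$(b) of the Corollary/Definition, to the four parts of the preceding ring-theoretic Lemma, exactly as you do (including the subring-enlargement trick for the cancellation statement and the standard quasi-compactness cancellations for the parenthetical refinements). No discrepancies worth noting.
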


A morphism of finite expansion behaves similarly as a morphisms of finite type. For example, we show below that the closed points of a scheme locally of finite expansion over a field are very dense and can be identified their residue field.

\begin{lemma} \label{lemma closed points}
 Let $k$ be a field, let $X$ be a $k$-scheme locally of finite expansion, and let $x \in X$. Then the following are equivalent.
 \begin{assertionlist}
  \item The point $x \in X$ is closed.
  \item The field extension $\kappa(x)/k$ is algebraic.
 \end{assertionlist}
\end{lemma}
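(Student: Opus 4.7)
The plan is to reduce to the finite-type case using the integral-over-finitely-generated decomposition built into the definition of finite expansion. After choosing an affine open neighborhood $U = \Spec A$ of $x$ in $X$, the hypothesis yields elements $a_1,\ldots,a_n \in A$ such that $A$ is integral over $B := k[a_1,\ldots,a_n]$; let $\pfr \subset A$ be the prime corresponding to $x$.

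For the implication (a) $\Rightarrow$ (b), closedness of $\{x\}$ in $X$ implies closedness in the open subset $U$, so $\pfr$ is maximal. Integrality of $A$ over $B$ then forces $\qfr := \pfr \cap B$ to be maximal, via the standard observation that if $A/\pfr$ is a field and $B/\qfr \hookrightarrow A/\pfr$ is integral then $B/\qfr$ is itself a field. Hilbert's Nullstellensatz applied to the finitely generated $k$-algebra $B$ yields that $B/\qfr$ is a finite algebraic extension of $k$, and since $A/\pfr = \kappa(x)$ is integral over $B/\qfr$, it follows that $\kappa(x)/k$ is algebraic.

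For the converse (b) $\Rightarrow$ (a), I would first establish a local version: for every affine open $V = \Spec C$ of $X$ containing $x$ (with $C$ of finite expansion over $k$), the prime $\rfr \subset C$ corresponding to $x$ is maximal. Indeed, $C/\rfr$ embeds into $\kappa(x)$ and is therefore an integral domain algebraic over $k$, and any such algebra is already a field. To upgrade local closedness to global closedness, pick $y \in \overline{\{x\}}$ and choose an affine open $V$ containing $y$. The intersection $\overline{\{x\}} \cap V$ is a nonempty open subset of the irreducible space $\overline{\{x\}}$, hence it must contain the generic point $x$. By the local step, $x$ is closed in $V$, so $\overline{\{x\}} \cap V = \{x\}$, forcing $y = x$; thus $\overline{\{x\}} = \{x\}$.

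There is no serious obstacle to expect here: the argument merely combines the two classical finite-type ingredients, namely the Nullstellensatz for finitely generated $k$-algebras and the lying-over/maximality correspondence for integral extensions, which behave exactly as in the classical case once the definition of finite expansion is unwound. The only subtlety worth verifying carefully is the topological passage from ``$x$ is closed in every affine open containing it'' to ``$x$ is closed in $X$'', but this is handled by the generic-point characterization used above.
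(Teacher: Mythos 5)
Your proof is correct and follows essentially the same route as the paper: both reduce to the Nullstellensatz via the ``integral over a finitely generated subalgebra'' structure of a finite-expansion algebra, the only cosmetic difference being that the paper applies this directly to $\kappa(x)$ (which is of finite expansion over $k$ once $x$ is known to be closed in an affine chart) while you work with the coordinate ring $A$ and push the maximal ideal down to $B$. Your spelled-out argument for (b) $\Rightarrow$ (a) is the standard one that the paper simply asserts holds for arbitrary $k$-schemes.
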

\begin{proof}
 The implication $(b) \Rightarrow (a)$ holds for any $k$-scheme. To see the other direction, note that $\Spec \kappa(x) \to \Spec k$ is of finite expansion.  We choose a finite type $k$-subalgebra $A \subset \kappa(x)$ such that $\kappa(x)$ is integral over $A$. The latter property implies that $A$ is a also a field and hence $k \subset A$ is finite by Hilbert's Nullstellensatz. In particular, $\kappa(x)/k$ is algebraic.
\end{proof}

\begin{lemma}
 Let $f\colon X \to Y$ be a morphism of finite expansion. If $Y$ is Jacobson, so is $X$.
\end{lemma}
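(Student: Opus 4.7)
The plan is to reduce to the affine situation and then exploit the factorisation noted in the remark after the definition of finite expansion: any $R$-algebra $A$ of finite expansion fits into $R \to R[x_1,\dots,x_n] \to A$ where the second map is integral. The claim is then a composition of two classical facts about the Jacobson property.

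First I would recall that being Jacobson is local on the source: a scheme is Jacobson if and only if it admits an affine open cover by spectra of Jacobson rings (Stacks, tag 01P3). Using the characterisation of locally of finite expansion via an affine cover (the cordef preceding this lemma), we may cover $Y$ by affine opens $V_i = \Spec R_i$ with $R_i$ Jacobson, and cover $f^{-1}(V_i)$ by affine opens $U_{i,j} = \Spec A_{i,j}$ with each $A_{i,j}$ of finite expansion over $R_i$. It then suffices to prove the following ring-theoretic statement: if $R$ is a Jacobson ring and $A$ is an $R$-algebra of finite expansion, then $A$ is Jacobson.

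For this I would factor the structure morphism as $R \to R[x_1,\dots,x_n] \twoheadrightarrow R[x_1,\dots,x_n]/\ker \hookrightarrow A$, where the last inclusion is integral (this is the content of the remark just after the definition). Since $R$ is Jacobson, the polynomial algebra $R[x_1,\dots,x_n]$ is Jacobson by the classical Jacobson Nullstellensatz (Stacks, tag 00GB), and the quotient $R[x_1,\dots,x_n]/\ker$ inherits the Jacobson property. Finally, I would invoke the fact that if $B_0 \subseteq A$ is an integral extension with $B_0$ Jacobson, then $A$ is Jacobson (Stacks, tag 00GH). Combining these gives that $A$ is Jacobson, and hence $X$ has an affine open cover by spectra of Jacobson rings, so $X$ itself is Jacobson.

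There is essentially no real obstacle; the only non-trivial ingredient is the behaviour of the Jacobson property under integral ring extensions, which one might prefer to cite rather than reprove. If a self-contained proof were desired, the standard argument goes as follows: for a prime $\qfr \subset A$, set $\pfr = \qfr \cap B_0$; by lying over together with incomparability in integral extensions, the maximal ideals of $A/\qfr$ lying over $\mfr/\pfr$ with $\mfr \supset \pfr$ maximal exhaust $\Spec(A/\qfr)$, and a minimal integral relation for an element in every maximal ideal forces its constant term into every maximal ideal of $B_0/\pfr$, giving $0$ and, by domain considerations, the element itself is $0$.
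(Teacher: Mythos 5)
Your proof is correct and follows essentially the same route as the paper: reduce to the affine case and then combine the permanence of the Jacobson property under finite type (polynomial ring plus quotient) and integral ring maps, using the factorisation of a finite-expansion algebra through a polynomial ring. The paper states this in one line; you have merely supplied the standard details, including a correct sketch of the integral-extension step.
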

\begin{proof}
 Indeed, this can be checked locally, so we may assume that $X$ and $Y$ are affine. Now the claim follows since the property of being Jacobson is preserved under integral and finitely presented morphisms.
\end{proof}

 After imposing some mild conditions, we can also decompose a non-affine morphism of finite expansion into an integral morphism and a morphism of finite presentation. To prove this, we use that every separated morphism of qcqs schemes can be written as composition of an affine morphism and a morphism of finite presentation (\cite[Thm.~1.1.2]{Temkin:RiemannZariski}). The latter may even assumed to be proper by \cite[Thm.~1.1.3]{Temkin:RiemannZariski}. 
 
 \begin{proposition} \label{prop temkin decomposition}
 Let $f\colon X \to Y$ be a separated morphism of qcqs schemes.
 \begin{subenv}
  \item If $f$ is universally closed then it can be decomposed as $f = h \circ g$ with $g$ integral and $h$ proper.
  \item If $f$ is of finite expansion then it can be decomposed as $f = h \circ g$ with $g$ integral and $h$ of finite presentation.
  \end{subenv}
 \end{proposition}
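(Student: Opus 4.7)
For part (1), I apply Temkin's Theorem~1.1.3 to decompose $f = h \circ g$ with $h\colon Z \to Y$ proper and $g\colon X \to Z$ affine. The plan is then to show that $g$ is universally closed, since an affine universally closed morphism is integral. To check this, I factor $g$ as $g = \pr_Z \circ \Gamma_g$, where $\Gamma_g\colon X \hookrightarrow X \times_Y Z$ is the graph morphism---a closed immersion because $h$ is separated---and $\pr_Z\colon X \times_Y Z \to Z$ is the base change of $f$ along $h$, hence universally closed.

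For part (2), I apply Temkin's Theorem~1.1.2 to write $f = h \circ g$ with $h\colon Z \to Y$ of finite presentation and $g\colon X \to Z$ affine. Since $h$ is quasi-separated, the cancellation property from the preceding corollary shows that $g$ is itself of finite expansion. It then suffices to further decompose the affine morphism $g$ as $g = h' \circ g'$ with $g'$ integral and $h'$ affine of finite presentation, for composing with $h$ yields the desired decomposition of $f$ (using that the composition of two morphisms of finite presentation is of finite presentation).

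To execute this affine reduction, I set $\Acal := g_\ast \Oscr_X$, a quasi-coherent $\Oscr_Z$-algebra, and invoke the standard approximation of affine morphisms over qcqs bases (EGA~IV.8.13, or Stacks Project, Tag~01ZM) to write $\Acal = \varinjlim_\mu \Acal_\mu$ as a filtered colimit of quasi-coherent $\Oscr_Z$-algebras $\Acal_\mu$ of finite presentation. I cover $Z$ by finitely many affine opens $V_i = \Spec R_i$ and choose finite quasi-generating systems $\{a_{i,k}\}_k \subset A_i := \Acal(V_i)$. Since each $V_i$ is quasi-compact, $A_i = \varinjlim_\mu \Acal_\mu(V_i)$, so filteredness and the finiteness of the data yield a single index $\mu_0$ such that every $a_{i,k}$ lifts through $\Acal_{\mu_0}(V_i) \to A_i$. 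Setting $Y' := \Spec_Z \Acal_{\mu_0}$, the structure map $h'\colon Y' \to Z$ is affine of finite presentation, and on each $V_i$ the image of $\Acal_{\mu_0}(V_i) \to A_i$ contains $R_i[a_{i,k}]$, over which $A_i$ is integral by choice of the quasi-generators; hence $g'\colon X \to Y'$ is integral, as required.

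The main obstacle will be the invocation of the approximation $\Acal = \varinjlim_\mu \Acal_\mu$ by finitely presented algebras on the qcqs base $Z$: passing from local finite-type data to a global finitely presented model is not automatic, and ultimately relies on the standard approximation results for affine morphisms of qcqs schemes. Once that is available, the finite-cover and filtered-colimit step, together with the remaining verifications of integrality and finite presentation, are routine.
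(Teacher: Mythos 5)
Your proof is correct and takes essentially the same route as the paper: part (1) is Temkin's Theorem~1.1.3 combined with the cancellation property (which you usefully spell out via the graph factorisation), and part (2) is Temkin's Theorem~1.1.2 followed by writing the affine morphism as a filtered limit of finitely presented affine morphisms and lifting finitely many quasi-generators over a finite affine cover. The approximation step you flag as the main obstacle is the standard fact that a quasi-coherent algebra on a qcqs scheme is a filtered colimit of finitely presented ones, and it is exactly what the paper invokes as well.
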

 \begin{proof}
  For the first assertion we may consider any decomposition as in \cite[Thm.~1.1.3]{Temkin:RiemannZariski}. Indeed $g$ is affine by construction and universally closed by the cancellation property and hence integral.
  
  To show the second assertion, write $f = h \circ g$ with $g\colon X \to X_0$ affine and $h\colon X_0 \to Y$ of finite presentation. We write $g$ as limit of affine morphisms $g_\lambda\colon X_\lambda \to X_0$ of finite presentation. We fix a finite affine open covering $X_0 = \bigcup_{i=1}^n \Spec R_{0,i}$. Over $\Spec R_{0,i}$ the morphisms $g$ and $g_\lambda$ corresponds to a $R_{0,i}$-algebras $R_i$ of finite expansion and $R_{\lambda,i}$ of finite presentation, respectively. We fix a finite set $E_i \subset R_i$ of quasi-generators for each $i$.
  We have $R_i = \varinjlim R_{\lambda,i}$ by definition, thus we get $E_i \subset R_{\lambda,i}$ for all $i$ if $\lambda$ is big enough. Replacing $X_0$ be $X_\lambda$, we may thus assume that $g$ is integral. 
 \end{proof}
 
  \begin{corollary} \label{cor universally closed satisfies finiteness property}
  Every separated universally closed morphism between qcqs schemes is of finite expansion
 \end{corollary}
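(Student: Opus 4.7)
The plan is to read off the statement directly from Proposition~\ref{prop temkin decomposition}(1) combined with the composition stability of the class of morphisms of finite expansion. Concretely, given a separated universally closed morphism $f\colon X \to Y$ between qcqs schemes, Proposition~\ref{prop temkin decomposition}(1) provides a factorisation $f = h \circ g$ with $g$ integral and $h$ proper. The task is then reduced to observing that each factor lies in our class.

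For the integral piece $g$, I would note that on affine opens an integral ring extension $R \to A$ admits the empty set as a (finite) quasi-generating system: $R[\emptyset] = R$ and by hypothesis $A$ is integral over $R$. Hence $g$ is locally of finite expansion, and it is quasi-compact because $g$ is integral (hence affine), so $g$ is of finite expansion. For the proper piece $h$, properness includes being of finite type, and any morphism of finite type is obviously of finite expansion (a finite set of generators is in particular a finite set of quasi-generators).

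Finally, by the first part of the corollary stating that the property ``of finite expansion'' is stable under composition, $f = h \circ g$ is of finite expansion. No real obstacle arises here; the only thing to be careful about is the trivial check that integral morphisms qualify as morphisms of finite expansion, which follows from the empty-system observation above. The content of the statement is entirely absorbed by the Temkin-type decomposition established in Proposition~\ref{prop temkin decomposition}(1).
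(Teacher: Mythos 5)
Your proposal is correct and matches the paper's argument: the paper also deduces the corollary directly from Proposition~\ref{prop temkin decomposition}(1), and your spelled-out checks (integral morphisms admit the empty quasi-generating system, proper morphisms are of finite type hence of finite expansion, and stability under composition) are exactly the implicit content of that deduction.
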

 \begin{proof}
  This is a direct consequence of the first assertion of the previous lemma.
 \end{proof}  

\subsection{Compactification of morphisms} \label{sect-ZMT}

The compactification of morphisms of finite expansion will proceed in two steps. First we compactify the morphisms of relative dimension zero, then we use Lemma~1.8 and the compactification of finitely presented morphisms to generalise this construction to arbitrary morphisms of finite expansion.

\begin{definition}
 Let $f\colon X \to Y$ be a morphism of schemes. We say that $f$ is locally quasi-profinite if it is locally of finite expansion and its fibres are totally disconnected. We call $f$ quasi-profinite if it is quasi-compact and locally quasi-profinite.
\end{definition}

There are several equivalent criterions to determine whether a morphism is locally quasi-profinite.
\begin{lemma}
 A morphism $f\colon X \to Y$ be a morphism of finite expansion. The following are equivalent
 \begin{assertionlist}
  \item $f$ is quasi-profinite.
  \item Every point $x \in X$ is a closed in the respective fibre $f^{-1}(f(x))$.
  \item For every $x \in X$ the extension of residue fields $\kappa(f(x))/\kappa(x)$ is algebraic.
 \end{assertionlist}
\end{lemma}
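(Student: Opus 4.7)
All three conditions are fibrewise in $Y$, so the first step is to reduce to the case where $Y = \Spec k$ for a field $k$. For each $y \in Y$, the fibre $X_y = X \times_Y \Spec\kappa(y)$ is locally of finite expansion over $\kappa(y)$ by base change, a point $x \in X$ with $f(x) = y$ corresponds to a unique point of $X_y$ with the same residue field $\kappa(x)$ (since $\kappa(x) \otimes_{\kappa(y)} \kappa(y) = \kappa(x)$ has only the zero prime), and $\{x\}$ is closed in $X_y$ iff it is closed in the fibre. Hence it suffices to prove that for a $k$-scheme $X$ locally of finite expansion, the conditions (a$'$) $X$ is totally disconnected, (b$'$) every $x \in X$ is closed, and (c$'$) every $\kappa(x)/k$ is algebraic are equivalent.

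The equivalence (b$'$) $\Leftrightarrow$ (c$'$) is exactly Lemma~\ref{lemma closed points}. The implication (a$'$) $\Rightarrow$ (b$'$) is purely topological: in a totally disconnected space, the connected component of any point is the singleton, and connected components are always closed.

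For (c$'$) $\Rightarrow$ (a$'$), total disconnectedness may be checked on an affine open cover, so I reduce to $X = \Spec A$. By the remark following the definition of finite expansion, there is a finitely generated $k$-subalgebra $A_0 \subset A$ such that $A$ is integral over $A_0$; let $\pi\colon X \to X_0 := \Spec A_0$ be the induced map. For any $x \in X$ the inclusion $\kappa(\pi(x)) \subset \kappa(x)$ combined with (c$'$) forces $\kappa(\pi(x))/k$ to be algebraic, so $\pi(x)$ is closed in $X_0$ by Lemma~\ref{lemma closed points} applied to the finite type $k$-scheme $X_0$. Since $\pi$ is integral, it is a closed map, so $\pi(X)$ is a closed subset of $X_0$ consisting entirely of closed points; as $X_0$ is Noetherian, the reduced subscheme structure on $\pi(X)$ is a reduced Artinian $k$-algebra, and therefore $\pi(X) = \{y_1, \ldots, y_n\}$ is a finite set. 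This yields a finite decomposition $X = \bigsqcup_{i=1}^n \pi^{-1}(y_i)$, and each $\pi^{-1}(y_i) = \Spec(A \otimes_{A_0} \kappa(y_i))$ is the spectrum of a ring integral over a field, hence an inverse limit of spectra of finite-dimensional $\kappa(y_i)$-algebras, hence a profinite (in particular totally disconnected) space. Consequently $X$ is totally disconnected.

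The main subtlety is the finiteness of $\pi(X)$: one must combine that $\pi$ is a closed map (from integrality) with the Noetherian structure of $X_0$ to upgrade ``closed set of closed points'' to ``finite set''. Once this is in place, the remaining identification of $\pi^{-1}(y_i)$ with a profinite topological space is a routine consequence of integrality, and the fibrewise reduction at the beginning is equally routine.
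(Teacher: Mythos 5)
Your proof is correct in substance but takes a genuinely different route from the paper for the main implication. The paper's proof is two lines: it gets (a)$\Leftrightarrow$(b) by citing the general fact that a scheme is zero-dimensional if and only if it is totally disconnected \cite[Tag 04MG]{AlgStackProj}, applied to the fibres, and (b)$\Leftrightarrow$(c) from Lemma~\ref{lemma closed points}, exactly as you do. You instead prove (c)$\Rightarrow$(a) by hand: decomposing $A$ as integral over a finite type subalgebra $A_0$, using Lemma~\ref{lemma closed points} on $X_0$ plus closedness of integral maps and Noetherianity to see that $\pi(X)$ is finite, and identifying each $\pi^{-1}(y_i)$ as the spectrum of a ring integral over a field, hence profinite. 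This is more work than the paper's citation, but it is more self-contained and yields the stronger structural statement that each affine open of a fibre is a finite disjoint union of profinite spaces; all of these steps check out. The one soft spot is the throwaway claim that ``total disconnectedness may be checked on an affine open cover'': this is \emph{false} for general topological spaces (an open cover by totally disconnected subsets does not force total disconnectedness), and this globalisation is precisely the nontrivial content hidden in the fact the paper cites. It does hold here --- the fibre is quasi-compact because $f$ is, so it admits a finite cover by open subsets which your argument shows to be compact, Hausdorff and totally disconnected, and in a $T_1$ space such a cover lets one propagate clopen separations to the whole fibre --- but that deserves a sentence of justification, or simply the same citation of \cite[Tag 04MG]{AlgStackProj} that the paper uses.
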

\begin{proof}
 The assertion (a) is equivalent to (b) since a scheme is zero-dimensional if and only if it is totally disconnected (see e.g.\ \cite[Tag 04MG]{AlgStackProj}). Now (b) and (c) are equivalent by Lemma~\ref{lemma closed points}   
\end{proof}

As a consequence, one easily checks that the following permanence properties hold.

\begin{proposition}
 \begin{subenv}
  \item The properties ``locally quasi-profinite'' and ``quasi-profinite'' of morphisms of schemes are stable under composition, base change and faithfully flat descent, and local on the target. The property ``quasi-profinite'' is also local on the source.
  \item Let $f\colon X \to Y$ and $g\colon Y \to Z$ be morphisms of schemes. If $g \circ f$ is locally quasi-profinite (resp.\ quasi-profinite and $g$ is quasi-separated), then $f$ is locally of quasi-profinite (resp.\ of quasi-profinite).
 \end{subenv}
\end{proposition}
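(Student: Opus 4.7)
My plan is to reduce each assertion to the analogous permanence result for (locally) of finite expansion established in the preceding Corollary, and then to separately verify the fibre condition using criterion~(c) of the preceding Lemma, namely that $\kappa(x)/\kappa(f(x))$ is algebraic at every $x \in X$.

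For composition in~(a) and the $2$-out-of-$3$ property in~(b), I would consider the tower $\kappa(g(f(x))) \subset \kappa(f(x)) \subset \kappa(x)$. If both $f$ and $g$ are locally quasi-profinite, both floors of this tower are algebraic, so the composite is algebraic; conversely, if the composite is algebraic, then so is the upper floor, giving~(b). The quasi-compactness assertions in the same parts follow directly from the corresponding statements for morphisms of finite expansion. Localness on the target and source is immediate, since both the finite expansion condition and the fibre condition are checked pointwise. For base change along $h\colon Y' \to Y$, the fibre of $f_{Y'}$ above $y' \in Y'$ with $y = h(y')$ is $f^{-1}(y) \otimes_{\kappa(y)} \kappa(y')$; since $f^{-1}(y)$ is locally integral over $\kappa(y)$, the tensor product remains locally integral over $\kappa(y')$, so all its residue fields are algebraic over $\kappa(y')$.

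The main obstacle will be faithfully flat descent. Given $h\colon Y' \to Y$ faithfully flat with $f_{Y'}$ locally quasi-profinite, finite expansion descends by the preceding Corollary, so it remains only to verify that each fibre $f^{-1}(y)$ is zero-dimensional. Choosing $y' \in Y'$ above $y$, the base-changed fibre $f^{-1}(y) \otimes_{\kappa(y)} \kappa(y')$ is zero-dimensional by assumption. Working locally on this fibre, I would write $f^{-1}(y) = \Spec A$ with $A$ integral over $\kappa(y)[t_1, \ldots, t_n]$, as granted by the remark following the definition of finite expansion. Then $A \otimes_{\kappa(y)} \kappa(y')$ is integral over $\kappa(y')[t_1, \ldots, t_n]$; since integral extensions preserve Krull dimension, the zero-dimensionality of the tensor product forces $n = 0$. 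Thus $A$ is integral over $\kappa(y)$, so $f^{-1}(y)$ is zero-dimensional, and $\kappa(x)/\kappa(y)$ is algebraic for every $x \in f^{-1}(y)$, as required.
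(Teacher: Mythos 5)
The paper offers no written proof of this proposition beyond ``one easily checks,'' pointing at the preceding equivalence lemma; your strategy --- permanence of ``(locally) of finite expansion'' from the earlier corollary plus a pointwise verification of criterion (c) on residue fields --- is exactly the intended route, and every step except one is fine. The flawed step is in the faithfully flat descent: from ``$A$ integral over $\kappa(y)[t_1,\dots,t_n]$'' you cannot conclude that zero-dimensionality of $A \otimes_{\kappa(y)} \kappa(y')$ forces $n=0$, because the structure map $\kappa(y)[t_1,\dots,t_n] \to A$ (and hence its base change) need not be injective; for instance $A = \kappa(y)[t]/(t^2)$ is integral over $\kappa(y)[t]$ with $n=1$ and yet $\dim A = 0$. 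What you actually need is only $\dim A = 0$, and that follows after a small repair: let $B \subseteq A$ be the image of $\kappa(y)[t_1,\dots,t_n]$; then $B \otimes_{\kappa(y)} \kappa(y') \to A \otimes_{\kappa(y)} \kappa(y')$ is an injective integral extension (injective by flatness of $\kappa(y')$ over $\kappa(y)$), so $\dim B = \dim\bigl(B\otimes_{\kappa(y)}\kappa(y')\bigr) = \dim\bigl(A\otimes_{\kappa(y)}\kappa(y')\bigr) = 0$, using that the dimension of a finite type algebra over a field is unchanged under field extension; hence $B$ is finite over $\kappa(y)$ and $A$ is integral over $\kappa(y)$, as required. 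Even more directly, $\Spec\bigl(A\otimes_{\kappa(y)}\kappa(y')\bigr) \to \Spec A$ is faithfully flat, hence surjective and satisfying going-down, so $\dim A \leq \dim\bigl(A\otimes_{\kappa(y)}\kappa(y')\bigr) = 0$ with no reference to quasi-generators at all. The remaining items (composition and cancellation via the tower of residue fields, base change via integrality of the fibre over $\kappa(y)$, locality) are correct as you state them.
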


In order to consider the property ``quasi-profinite'' locally at a point $x \in X$, we make the following definition.

\begin{definition}
 Let $X$ be a topological space. We say that $x \in X$ is quasi-isolated if $\{x\}$ is a connected component of $X$.
\end{definition}

\begin{lemma} \label{lemma quasi-isolated points}
 Let $k$ be a field, let $X$ be a separated $k$-scheme of finite expansion and let $x \in X$. We write $X= \varprojlim X_\lambda$, where $X_\lambda$ are of finite presentation over $k$ with finite transition morphisms (cf.\ Proposition~\ref{prop temkin decomposition}). Then the following are equivalent.
 \begin{assertionlist}
  \item $x$ is quasi-isolated.
  \item $\{x\}$ is an irreducible component of $X$.
  \item $k \to \Oscr_{X,x}$ is integral.
  \item The  image $x_\lambda$ of $x$ in $X_\lambda$ is isolated for some $\lambda$. In this case $x_{\lambda'}$ is isolated for all $\lambda' > \lambda$.
 \end{assertionlist}
\end{lemma}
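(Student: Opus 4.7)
The plan is to establish the cycle $(a) \Rightarrow (b) \Rightarrow (c) \Rightarrow (d) \Rightarrow (a)$, together with the stability assertion at the end of~(d). The implication $(a) \Rightarrow (b)$ is a routine sobriety argument: any irreducible closed subset of $X$ containing $x$ is connected, hence contained in the connected component $\{x\}$, so the unique irreducible component through the sober-space point $x$ equals $\{x\}$.

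For $(d) \Rightarrow (a)$, suppose $\{x_\lambda\}$ is clopen in $X_\lambda$. Since $X_\lambda$ is of finite type over $k$ and $x_\lambda$ is then a closed point, the Nullstellensatz makes $\kappa(x_\lambda)/k$ finite; combined with $\Oscr_{X_\lambda,x_\lambda}$ being a zero-dimensional Noetherian local ring, this makes $\Oscr_{X_\lambda,x_\lambda}$ finite over $k$. Because $X \to X_\lambda$ is integral (a cofiltered limit of finite morphisms), the preimage $Y$ of $\{x_\lambda\}$ is clopen and affine, $Y = \Spec B$ with $B$ integral over $\Oscr_{X_\lambda,x_\lambda}$ and hence over $k$. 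Thus $B$ is zero-dimensional, $Y$ is totally disconnected, and $\{x\}$ is a connected component of $Y$ and of $X$.

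For $(b) \Rightarrow (c)$: if $\{x\}$ is an irreducible component then $x$ is closed, so $\kappa(x)/k$ is algebraic by Lemma~\ref{lemma closed points}. Any $y \in \Spec \Oscr_{X,x}$ is a generalisation of $x$, and $\overline{\{y\}}$ is an irreducible closed subset of $X$ containing $x$, hence equal to $\{x\}$ by maximality; thus $y = x$ and $\Oscr_{X,x}$ has $\mfr_x$ as its unique prime ideal, i.e.\ as nilradical. For $a \in \Oscr_{X,x}$, the algebraicity of $\bar a$ gives $a^n + c_{n-1}a^{n-1}+\cdots+c_0 \in \mfr_x$ with $c_i \in k$; this element is nilpotent, and raising to a sufficiently high power yields a monic polynomial relation for $a$ over $k$, proving integrality.

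I expect $(c) \Rightarrow (d)$ to be the main obstacle, as it requires descending integrality from the colimit $\Oscr_{X,x}$ to a single intermediate stalk. I would pick an affine neighbourhood $U_\lambda = \Spec A_\lambda \ni x_\lambda$ with $k$-algebra generators $a_1,\dots,a_n$ of $A_\lambda$; their integral relations in $\Oscr_{X,x} = \varinjlim \Oscr_{X_{\lambda'},x_{\lambda'}}$ already hold in some $\Oscr_{X_{\lambda'},x_{\lambda'}}$ since filtered colimits of rings detect finite relations termwise. As $A_{\lambda'}$ is finite over $A_\lambda$, every element of $A_{\lambda'}$ is integral over $k$ in $\Oscr_{X_{\lambda'},x_{\lambda'}}$, and a unit that is integral over $k$ in a local ring also has an integral inverse; hence the whole stalk $\Oscr_{X_{\lambda'},x_{\lambda'}}$ is integral over $k$, so zero-dimensional, Artinian, and finite over $k$. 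In the Noetherian scheme $X_{\lambda'}$, this forces $\{x_{\lambda'}\}$ to be its own irreducible component disjoint from the union of the remaining components, hence clopen. For the stability assertion, if $\mu > \lambda'$, the preimage of $\{x_{\lambda'}\}$ in $X_\mu$ is finite over the Artinian ring $\Oscr_{X_{\lambda'},x_{\lambda'}}$ and therefore topologically discrete, making $\{x_\mu\}$ clopen inside it and hence in $X_\mu$.
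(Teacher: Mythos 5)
Your proof is correct and follows the same cycle $(a)\Rightarrow(b)\Rightarrow(c)\Rightarrow(d)\Rightarrow(a)$ as the paper; your arguments for $(a)\Rightarrow(b)$, for $(b)\Rightarrow(c)$ (algebraicity of $\kappa(x)/k$ plus the fact that $\mfr_x$ is the nilradical), and for $(d)\Rightarrow(a)$ (the preimage of the clopen point is the spectrum of a ring integral over $k$, hence totally disconnected) match the paper's essentially step for step. Where you genuinely diverge is the implication you flagged, $(c)\Rightarrow(d)$. The paper argues topologically: setting $U_\lambda = \Spec\Oscr_{X_\lambda,x_\lambda}\setminus\{x_\lambda\}$, integrality of $\Oscr_{X,x}$ over $k$ gives $\varprojlim\Spec\Oscr_{X_\lambda,x_\lambda}=\{x\}$, hence $\varprojlim U_\lambda=\emptyset$, and by the nonemptiness of cofiltered limits of nonempty spectral spaces (Stacks, Tag 01Z2) some $U_\lambda$ is already empty. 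You instead spread out finitely many integral equations for the generators of an affine chart to a finite level $\lambda'$ and bootstrap (transitivity of integrality along the finite transition map, plus the fact that a unit integral over $k$ has integral inverse) to show the whole stalk $\Oscr_{X_{\lambda'},x_{\lambda'}}$ is integral over $k$; both routes are valid, the paper's being shorter but resting on a compactness result for spectral spaces, yours being more elementary and exhibiting the finite level explicitly. One presentational remark: the stability clause of (d) should be argued from an arbitrary $\lambda$ with $x_\lambda$ isolated, not only from the $\lambda'$ produced in $(c)\Rightarrow(d)$; this is harmless, since isolatedness alone already makes $\Oscr_{X_\lambda,x_\lambda}$ Artinian and finite over $k$ (as you observe in your $(d)\Rightarrow(a)$ paragraph), after which your preimage argument applies verbatim, whereas the paper deduces stability by rerunning its $(d)\Rightarrow(a)$ argument for the fibres of $X_{\lambda'}\to X_\lambda$.
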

\begin{proof}
 Obviously, the first assertion implies the second assertion. If $\{x\}$ is an irreducible component, $k \to \kappa(x)$ is integral by Lemma~\ref{lemma closed points} and $\kappa(x) = \Oscr_{X,x}^{\rm red}$ since $x$ is a generic point, proving that $k \to \Oscr_{X,x}$ is integral.
 
  Now assume that $\Oscr_{X,x}$ is integral over $k$. We denote $U_\lambda = \Spec \Oscr_{X_\lambda,x_\lambda} \setminus \{x_\lambda\}$. Since $\{x\} = \Spec \Oscr_{X,x} = \varprojlim  \Spec \Oscr_{X_\lambda,x_\lambda}$, we get $\varprojlim U_\lambda= \emptyset$. By \cite[Tag 01Z2]{AlgStackProj} this implies that there exists a $\lambda$ with $U_\lambda = \emptyset$. Thus $\Spec \Oscr_{X_\lambda,x_\lambda} = \{x_\lambda\}$, hence $x_\lambda$ is isolated.
  
  On the other hand assume that $x_\lambda$ is isolated. Then the connected component $C$ of $X$ containing $x$ is contained in the fibre of over $x_\lambda$. Since the fibres of $X \to X_\lambda$ are totally disconnected by lying over, we must have $C = \{x\}$. The same argument also proves that $x_{\lambda'}$ is isolated for all $\lambda' > \lambda$.
\end{proof}

Now we can deduce a generalisation of the algebraic version of Zariski's Main Theorem. Fix a ring $R$ and an $R$-algebra $A$ of finite expansion. We denote by
\[
 \qIsol(A/R) \coloneqq \{\qfr \in \Spec A \mid \qfr \textnormal{ is quasi-isolated in its fibre over } \Spec R\}.  
\]
 Moreover, we denote by $\LocIsom(A/R)$ the set of prime ideals $\qfr \subset A$ such that there exists $s \in R$ with $s \not\in \qfr$ and $R_s = A_s$. In other words, if $X=\Spec A$ and $Y=\Spec R$ and $f\colon X \to Y$ is the morphism corresponding to $\varphi$, then $\LocIsom(A/R)$ is the set of points $x\in X$ such that there exists an open neighbourhood $V \subset Y$ of $f(x)$ such that $f$ induces an isomorphism $f^{-1}(V) \isom V$. 

\begin{proposition} 
 Let $A$ be an $R$-algebra of finite expansion and $R'$ the integral closure of $R$ in $A$. Then
 \[
  \qIsol(A/R) = \LocIsom(A/R');
 \]
 in particular $\qIsol(A/R)$ is open.
\end{proposition}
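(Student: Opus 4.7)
The plan is to prove both inclusions separately; once the equality is established, openness is manifest because $\LocIsom(A/R')$ is visibly open: if $s \in R' \setminus \qfr$ witnesses $\qfr \in \LocIsom(A/R')$, then the same $s$ works for every prime in $D(s)$, so $D(s) \subseteq \LocIsom(A/R')$.

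The easy inclusion $\LocIsom(A/R') \subseteq \qIsol(A/R)$ goes as follows. If $s \in R' \setminus \qfr$ satisfies $A_s = R'_s$, then $\Oscr_{X,\qfr} = R'_{\qfr \cap R'}$, so the local ring of $\qfr$ in the fibre $X_\pfr$ is $R'_{\qfr \cap R'} \otimes_R \kappa(\pfr)$, where $\pfr = \qfr \cap R$. Since $R'$ is integral over $R$, this local ring is integral over $\kappa(\pfr)$, so the criterion (c)$\Rightarrow$(a) of Lemma~\ref{lemma quasi-isolated points} gives $\qfr \in \qIsol(A/R)$.

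For the hard inclusion $\qIsol(A/R) \subseteq \LocIsom(A/R')$, the strategy is to reduce to the classical algebraic Zariski Main Theorem for finite type $R$-algebras. I pick a finite quasi-generating system $a_1,\ldots,a_n$ and set $B \coloneqq R[a_1,\ldots,a_n]$; then $A$ is integral over $B$ and writing $A = \varinjlim A_\mu$ as the filtered colimit of its finite $B$-subalgebras realises $\Spec A$ as a cofiltered limit of $R$-schemes of finite presentation with finite transition maps (finiteness over the Noetherian ring $B$ ensures finite presentation). For $\qfr \in \qIsol(A/R)$ over $\pfr$, base-changing to $\kappa(\pfr)$ and applying Lemma~\ref{lemma quasi-isolated points} to the fibre yields some $\mu$ for which $\qfr_\mu \coloneqq \qfr \cap A_\mu$ is isolated in its fibre over $\pfr$. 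The classical Zariski Main Theorem, applied to the finite type $R$-algebra $A_\mu$, then produces $s \in R'_\mu$ (the integral closure of $R$ in $A_\mu$) with $s \notin \qfr_\mu$ and $A_{\mu,s} = R'_{\mu,s}$. Since $R'_\mu \subseteq R'$, this $s$ lies in $R' \setminus \qfr$.

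The main obstacle is to bootstrap the equality $A_{\mu,s} = R'_{\mu,s}$ to $A_s = R'_s$. Integrality of $A$ over $A_\mu$ gives that $A_s$ is integral over $A_{\mu,s} = R'_{\mu,s} \subseteq R'_s$, so every element of $A_s$ is integral over $R'_s$. To conclude $A_s \subseteq R'_s$ I invoke the standard fact that integral closure commutes with localisation, which guarantees that $R'_s$ is integrally closed in $A_s$ (this is the delicate point, because $s$ lies in $R'$ and not necessarily in $R$, so one must use the localisation-commutes-with-integral-closure lemma in its form for multiplicative sets inside the ambient ring). The reverse containment $R'_s \subseteq A_s$ is automatic, so $A_s = R'_s$ and hence $\qfr \in \LocIsom(A/R')$, completing the proof.
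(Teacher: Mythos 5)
Your proof is correct, but it takes a genuinely different route for the hard inclusion than the paper does. The paper first observes $\LocIsom(A/R') \subseteq \qIsol(A/R) \subseteq \qIsol(A/R')$, reducing to $\qIsol(A/R') \subseteq \LocIsom(A/R')$, then writes $A$ as a colimit of finite type $R'$-subalgebras with finite transition maps, applies Zariski's Main Theorem over $R'$ at each finite level, and passes to the limit \emph{geometrically}: it produces an open $V$ in $\Spec R'$ over which each $\Spec A_{\lambda'} \to \Spec R'$ is an isomorphism (using part (d) of Lemma~\ref{lemma quasi-isolated points} to propagate along the system) and concludes by taking the inverse limit of these isomorphisms. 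You instead work over $R$ via $B = R[a_1,\dots,a_n]$, apply the classical ZMT to a single finite-type stage $A_\mu$ to get $s \in R'_\mu \subseteq R'$ with $A_{\mu,s}=R'_{\mu,s}$, and then bootstrap this single isomorphism up to $A_s = R'_s$ by a purely ring-theoretic argument: $A_s$ is integral over $R'_s$, and $R'_s$ is integrally closed in $A_s$ because $R'$ is integrally closed in $A$ (transitivity of integrality) and integral closure commutes with localisation at the multiplicative set $\{s^n\} \subseteq R'$. This buys a more self-contained algebraic finish and sidesteps the limit-of-open-sets bookkeeping; the paper's version is more uniform in that it exhibits the open set $V$ downstairs directly. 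You also supply the easy inclusion and the openness, which the paper leaves implicit; both of your arguments there are fine.

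One small inaccuracy: you justify finite presentation of the $A_\mu$ over $R$ by calling $B$ Noetherian, but $R$ is an arbitrary ring here, so $B = R[a_1,\dots,a_n]$ need not be Noetherian. This is harmless, because finite presentation over $R$ is never actually used: the classical ZMT needs only finite type, and Lemma~\ref{lemma quasi-isolated points} is applied to the fibres over residue fields, where finite type and finite presentation coincide. You should simply drop the parenthetical.
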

\begin{proof}
 Since $\LocIsom(A/R') \subseteq \qIsol(A/R) \subseteq \qIsol(A/R')$, it suffices to show $\qIsol(A/R') \subseteq \LocIsom(A/R')$. We write $A = \varinjlim A_\lambda$ as limit of finite type $R'$-subalgebras with finite transition maps. In particular, we have $\qIsol(A_\lambda/R') = \LocIsom(A_\lambda/R')$ by the affine version of Zariski's main theorem.
  
 Denote by $f_\lambda\colon \Spec A \to\Spec A_\lambda$, $g_\lambda\colon \Spec A_\lambda \to \Spec R$ and $f\colon  \Spec A \to \Spec R'$ the canonical morphisms. Then we have for every $\pfr \in \qIsol(A/R')$ that $f_\lambda(\pfr) \in \qIsol(A_\lambda/R') = \LocIsom(A_\lambda/R') $ for $\lambda$ big enough by Lemma~\ref{lemma quasi-isolated points}. Fixing such a $\lambda$, $V \coloneqq g_\lambda (\LocIsom(A_\lambda/R'))$ is an open neighbourhood of $f(\pfr)$ such that $g_{\lambda'}^{-1} (V) \subset \LocIsom(A_\lambda'/R')$ for every $\lambda' > \lambda$ by Lemma~\ref{lemma quasi-isolated points}~(d). Taking the limit we obtain that $f$ induces an isomorphism $f^{-1}(V) \isom V$. 
\end{proof}

Since the relative normalisation is local on the base, we may globalise the result as follows.
 
 \begin{proposition}\label{prop ZMT algebraic}
  Let $f\colon X \to Y$ be an affine morphism of finite expansion. Let $f'\colon X \to X'$ be the normalisation of $X$ relative to $Y$. Then there exists an open subscheme $U' \subset X'$ such that
  \begin{assertionlist}
   \item $f'^{-1}(U') \to U'$ is an isomorphism and
   \item $f'^{-1}(U') \subseteq X$ is the set of points which are quasi-isolated in the fibre of $f$.
  \end{assertionlist}
  In particular, if $f$ is quasi-profinite then $f'$ is an open immersion.
 \end{proposition}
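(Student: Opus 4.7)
The plan is to derive the statement from the affine version by a gluing argument. Since $f$ is affine, the relative normalisation $f'\colon X \to X'$ is affine over $Y$ and its construction is compatible with restriction to affine opens of $Y$: over $V = \Spec R \subset Y$ with $f^{-1}(V) = \Spec A$, we have $X' \times_Y V = \Spec R'$ with $R'$ the integral closure of $R$ in $A$. So the problem is local on $Y$.

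First I would cover $Y$ by affine opens $V_i = \Spec R_i$, write $f^{-1}(V_i) = \Spec A_i$ and $X' \times_Y V_i = \Spec R_i'$, and apply the previous (affine) proposition to obtain opens $U_i' \subset X' \times_Y V_i$ satisfying (a) and (b) for the restricted morphism $f|_{f^{-1}(V_i)}$. Because ``quasi-isolated in the fibre'' is an intrinsic fibrewise condition, $(f')^{-1}(U_i')$ is precisely the set of points of $f^{-1}(V_i)$ that are quasi-isolated in their fibres under the original morphism $f$.

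The main step is verifying that the $U_i'$ glue. On the overlap $W_{ij} \coloneqq X' \times_Y (V_i \cap V_j)$, both $(f')^{-1}(U_i' \cap W_{ij})$ and $(f')^{-1}(U_j' \cap W_{ij})$ coincide as subsets of $f^{-1}(V_i \cap V_j)$, by the intrinsic fibrewise characterisation above. Since $f'$ induces isomorphisms on each of $U_i'$ and $U_j'$, it is a bijection between this common preimage and both $U_i' \cap W_{ij}$ and $U_j' \cap W_{ij}$, forcing these to agree inside $W_{ij}$. Gluing then produces the required open $U' \subset X'$; property (a) is inherited since being an isomorphism is local on the target, and (b) is inherited since it is an intrinsic condition on points of $X$.

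For the ``in particular'' statement: if $f$ is quasi-profinite, then each fibre is totally disconnected, so every point of $X$ is a connected component of its fibre, hence quasi-isolated. By (b) this forces $(f')^{-1}(U') = X$, and then (a) exhibits $f'$ as the composition of an isomorphism $X \isom U'$ with the open immersion $U' \hookrightarrow X'$, which is an open immersion. The only genuine obstacle is the gluing compatibility, which is handled entirely by the intrinsic characterisation in (b); the remainder transfers formally from the affine case.
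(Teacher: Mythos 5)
Your proposal is correct and is exactly the paper's intended argument: the paper gives no separate proof, remarking only that ``the relative normalisation is local on the base, so we may globalise'' the preceding affine proposition $\qIsol(A/R)=\LocIsom(A/R')$. Your gluing verification via the intrinsic fibrewise characterisation of quasi-isolated points supplies precisely the details the paper leaves implicit.
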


  We can now deduce that any separated morphism of finite expansion between qcqs schemes is compactifiable, in the sense that it can be written as the composition of an open embedding with a universally closed morphism. Note that the converse also holds true by Corollary~\ref{cor universally closed satisfies finiteness property}.
 
 \begin{theorem}
  Let $f\colon X \to Y$ be a separated morphism of finite expansion between qcqs schemes. Then $f$ can be written as composition $f = \fbar \circ j$ where $j\colon X \to \Xbar$ is an open embedding and $\fbar\colon \Xbar \to Y$ is separated and universally closed.
 \end{theorem}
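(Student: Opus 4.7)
The plan is to carry out the compactification in two stages, following the outline at the start of Section~\ref{sect-ZMT}. First, apply Proposition~\ref{prop temkin decomposition}(b) to decompose $f = h \circ g$ with $g \colon X \to X_0$ integral and $h \colon X_0 \to Y$ of finite presentation; separatedness of $h$ follows from that of $f$ and of $g$ (affine, hence separated) by the usual diagonal argument. Since $h$ is then a separated morphism of finite type between qcqs schemes, Nagata compactification (in the form proven by Deligne, Conrad, and L\"utkebohmert--Temkin) yields a factorisation $h = \overline{h} \circ i$ with $i \colon X_0 \hookrightarrow \overline{X}_0$ an open immersion and $\overline{h} \colon \overline{X}_0 \to Y$ proper.

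It then remains to extend the integral morphism $g$ across the boundary $\overline{X}_0 \setminus X_0$. To this end, I define $\overline{X}$ to be the relative normalisation of $\overline{X}_0$ in $X$ along $i \circ g \colon X \to \overline{X}_0$. This produces an integral (hence affine and universally closed) morphism $\overline{g} \colon \overline{X} \to \overline{X}_0$ together with a canonical factorisation $j \colon X \to \overline{X}$ of $i \circ g$ through $\overline{g}$.

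The crucial point -- and the only nontrivial verification -- is that $j$ is an open immersion. Since formation of the relative normalisation commutes with flat base change, in particular with base change along the open immersion $i$, the preimage $\overline{g}^{-1}(X_0) = \overline{X} \times_{\overline{X}_0} X_0$ is canonically the relative normalisation of $X_0$ in $X$ with respect to $g$. But $g$ is already integral, so $X$ coincides with its own relative normalisation over $X_0$, giving a canonical isomorphism $X \isom \overline{g}^{-1}(X_0)$ compatible with $j$. Consequently $j$ factors as this isomorphism followed by the pullback of $i$ along $\overline{g}$, which is again an open immersion. Setting $\overline{f} \coloneqq \overline{h} \circ \overline{g}$ completes the argument: this morphism is separated (composition of separated morphisms) and universally closed (composition of universally closed morphisms), and $f = \overline{f} \circ j$ is the desired decomposition. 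The main subtlety is the base-change property of the relative normalisation combined with the identification $\overline{g}^{-1}(X_0) \cong X$, which depends on $g$ being \emph{integral} rather than merely affine; this is precisely why the refinement provided by Proposition~\ref{prop temkin decomposition}(b) is essential.
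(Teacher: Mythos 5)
Your argument is correct and shares its skeleton with the paper's proof --- Temkin's decomposition $f = h\circ g$ with $g$ integral and $h$ of finite presentation (Proposition~\ref{prop temkin decomposition}), Nagata compactification of $h$, and then the relative normalisation of $\overline{X}_0$ in $X$ as the compactified total space --- but you verify the crucial point, that $j$ is an open immersion, by a genuinely different and more elementary route. The paper first blows up $\overline{X}_0$ along the complement of $X_0$ so that the open immersion $i$ becomes affine, observes that $i\circ g$ is then affine and quasi-profinite, and invokes Proposition~\ref{prop ZMT algebraic} (its finite-expansion version of Zariski's Main Theorem, whose proof runs through a limit argument and the classical affine ZMT). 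You bypass both the blow-up and the ZMT-type input: because the morphism you normalise along is $i\circ g$ with $g$ \emph{already integral}, the restriction of the relative normalisation over the open $X_0$ is the relative normalisation of $X_0$ in $X$, which is $X$ itself, so $j$ is an isomorphism onto the open subscheme $\overline{g}^{-1}(X_0)$. This buys a shorter proof of the compactification theorem at the cost of not establishing the more general content of Proposition~\ref{prop ZMT algebraic} (the openness of the quasi-isolated locus for an arbitrary affine morphism of finite expansion), which the paper wants anyway. Two small imprecisions you should repair: relative normalisation does \emph{not} commute with arbitrary flat base change (it commutes with smooth or \'etale base change), but restriction along the quasi-compact open immersion $i$ --- the only case you use --- is unproblematic since integral closure is a local construction on the base; and separatedness of $h$ does not follow from that of $f$ and $g$ by the usual diagonal cancellation (that cancellation gives separatedness of the \emph{first} factor $g$; deducing it for $h$ would require $g$ surjective and universally closed), but it is part of the statement of Temkin's decomposition theorem, so nothing is lost.
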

 \begin{proof}
  Write $f = g \circ h$ with $g$ is of finitely presented and $h$ integral (cf.~Proposition~\ref{prop temkin decomposition}). We can write $g = \gbar \circ j'$ where $j'$ is an open embedding and $\gbar$ is proper (see e.g.~\cite[Thm.~4.1]{Conrad:NagataDeligne}) . Note that we may assume that $j'$ is affine after blowing up along its complement. We denote $h' = j' \circ g$, then $f = \gbar \circ h'$, where $\gbar$ is proper and $h'$ is affine and quasi-profinite. By Proposition~\ref{prop ZMT algebraic} we can write $h' = \hbar \circ j$, where $\hbar$ is integral and $j$ an open embedding. Setting $\fbar = \gbar \circ \hbar$, the claim follows.
 \end{proof}

\subsection{\'Etale cohomology for universally closed morphisms} \label{sect-univclosedbc}

When working with \'etale cohomology, we will mostly assume that we are in the following setup. We fix a base scheme $S$ and an \'etale ring sheaf $\Ascr$ on $S$. For any $S$-scheme $X$, we denote by $\Ascr_X$ the pull-back of $\Ascr$ to $X$ and by $\D(X, \Ascr_X)$ the derived category of $\Ascr_X$-modules. We denote by $\D_{\rm tors} (X,\Ascr_X) \subseteq \D (X,\Ascr_X)$ the full subcategory of complexes whose cohomology group are torsion, or equivalently the derived category of torsion $\Ascr_X$-modules. Often we will directly assume that $\Ascr$ is torsion, thus $\D_{\rm tors} (X,\Ascr_X) =\D (X,\Ascr_X)$.


\begin{proposition}[Universally closed base change] \label{prop proper BC}
 Let
 \begin{center}
  \begin{tikzcd}
   X' \arrow{r}{g'} \arrow{d}{f'} & X \arrow{d}{f} \\
   S' \arrow{r}{g} & S
  \end{tikzcd}
 \end{center}
 be a Cartesian diagram with $f$ separated and universally closed. Let $\Fscr$ be an Abelian torsion sheaf on the \'etale site of $X$. Then the base change morphism $\varphi^q\colon g^\ast \R^qf_\ast\Fscr \to (\R^qf'_\ast)g'^\ast\Fscr$ is an isomorphism for every $q \geq 0$.
\end{proposition}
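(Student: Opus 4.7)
The strategy is to use Proposition~\ref{prop temkin decomposition}(a) to reduce to two more tractable cases.

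First, by Proposition~\ref{prop temkin decomposition}(a), factor $f = h \circ g_0$ with $g_0 \colon X \to X_0$ integral and $h \colon X_0 \to S$ proper. Base-changing along $g$ yields a compatible factorization $f' = h' \circ g_0'$, with $h'$ proper and $g_0'$ integral. Since $\R f_\ast = \R h_\ast \circ \R g_{0,\ast}$ and analogously for $f'$, the base change morphism associated to $f$ is the composition of those associated to $h$ and $g_0$. It therefore suffices to prove the assertion separately for the proper morphism $h$ and for the integral morphism $g_0$.

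For the proper morphism $h$, the claim is the classical proper base change theorem of Grothendieck--Artin. For the integral morphism $g_0$, I would verify the base change isomorphism on stalks at geometric points. For a geometric point $\bar x_0 \to X_0$, since $g_0$ is affine one has
\[
 (\R^q g_{0,\ast} \Fscr)_{\bar x_0} = H^q\bigl(X \times_{X_0} \Spec \Ocal^{\mathrm{sh}}_{X_0, \bar x_0}, \Fscr\bigr).
\]
Locally the ring map corresponding to $g_0$ is an integral extension $R \to A$; writing $A = \varinjlim A_\lambda$ with each $A_\lambda$ finite over $R$, the scheme $X \times_{X_0} \Spec \Ocal^{\mathrm{sh}}_{X_0, \bar x_0}$ is exhibited as a cofiltered limit (with affine transition maps) of finite disjoint unions of strictly henselian local schemes. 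By the limit theorem for \'etale cohomology of torsion sheaves, the higher cohomology vanishes, and the description of the global sections as a (continuous) product of stalks of $\Fscr$ is visibly functorial under further base change in $X_0$. This yields the base change isomorphism for $g_0$, and combined with the proper case above, for $f$.

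The principal technical point lies in the integral case: the stalk calculation for $\R g_{0,\ast}$ and its compatibility with base change. Because $g_0$ need not be of finite presentation, direct finite-level arguments are unavailable, and one must rely on the limit formalism for \'etale cohomology of torsion sheaves (which is precisely the reason the torsion hypothesis on $\Fscr$ is needed).
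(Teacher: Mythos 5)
Your reduction is the same as the paper's first move: both proofs invoke Proposition~\ref{prop temkin decomposition}(a) to write $f$ as a proper morphism following an integral one, dispose of the proper piece by the classical proper base change theorem, and combine the two via the compatibility of base change maps with composition (Leray). The difference is in how the integral case is handled. The paper first reduces (as in \cite[Exp.~XII, Lemme~6.1]{SGA4.3}) to $S$ strictly Henselian local with $g$ the inclusion of the closed point, observes that for $A$ integral over $R$ the couple $(A,\mfr A)$ is a Henselian pair, and quotes Gabber's affine analogue of the proper base change theorem \cite{Gabber:ProperBC}. You instead argue stalkwise on $X_0$, writing $X\times_{X_0}\Spec\Ocal^{\mathrm{sh}}_{X_0,\bar x_0}$ as a cofiltered limit of finite disjoint unions of strictly Henselian local schemes and killing the higher cohomology by the limit theorem. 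This buys you a more self-contained argument (Gabber's theorem is overkill for an integral extension of a strictly Henselian base, where the higher cohomology of both source and target of the comparison map simply vanishes), at the cost of having to treat $q=0$ by hand. That $H^0$ step is the one place where you are too quick: the ``continuous product of stalks'' is indexed by the profinite set of connected components of the integral fibre, and its invariance under further base change is not ``visible'' --- it amounts to checking that the closed fibre $\Spec(A\otimes_R\kappa)$, $\kappa$ separably closed, has the same connected components (purely inseparable residue field extensions) after passing to a larger separably closed field, and that the stalks of $\Fscr$ at the corresponding geometric points agree. This is true and routine, but it is the actual content of the $q=0$ comparison and should be spelled out; also note that the identification of $(\R^qg_{0,\ast}\Fscr)_{\bar x_0}$ with cohomology over the strict localisation uses only that $g_0$ is affine (hence qcqs), which is fine, and that writing $\Fscr$ as a filtered colimit of constructible sheaves descending to finite levels is needed to apply the limit theorem. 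With those points made explicit your proof is complete and correct.
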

\begin{proof}

 By the same argument as in \cite[Exp.~XII,~Lemme~6.1]{SGA4.3} it suffices to consider the case that $S = \Spec R$ is the spectrum of a strictly Henselian local ring and $g'$ is the embedding of its closed point. In particular, we can write $f = p \circ f_0$ with $p$ proper and $f_0$ integral. By a standard application of Leray's spectral sequence, it suffices to prove the base change theorem for $f_0$ and $p$ instead of $f$. The result for $p$ is the classical base change theorem; thus it suffices to consider the case that $f$ is integral.
 
 Altogether, we reduced to the case of an integral morphism $f\colon \Spec A \to \Spec R$ with $R$ strictly Henselian and $g\colon \Spec R/\mfr \to \Spec R$ is the embedding of a closed point. Then $\varphi^q$ can be identified with the canonical morphism
 \[
  \H^q(\Spec A, \Fscr) \to \H^q(\Spec A/\mfr A, g'^\ast\Fscr).
 \]
 Since $A$ is integral over $R$, the couple $(A,\mfr A)$ is a Henselian pair. Thus the assertion is proven in \cite[Thm.~1]{Gabber:ProperBC}.
\end{proof}

\begin{corollary}
 Let $k$ be a separably closed field and let $X$ be a separated, universally closed scheme over $k$. Then for any separably closed field extension $K/k$ and any torsion sheaf $\Fscr$ on $X$ we have $\H^i(X_K,\Fscr) = \H^i(X,\Fscr)$ for any $i \in \ZZ$. 
\end{corollary}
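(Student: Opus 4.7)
The plan is to apply Proposition~\ref{prop proper BC} to the Cartesian square
\[
\begin{tikzcd}
X_K \arrow{r}{g'} \arrow{d}{f'} & X \arrow{d}{f} \\
\Spec K \arrow{r}{g} & \Spec k
\end{tikzcd}
\]
where $f$ is separated and universally closed by assumption. The proposition yields a canonical isomorphism $g^\ast \R^i f_\ast \Fscr \isom \R^i f'_\ast g'^\ast \Fscr$ of \'etale sheaves on $\Spec K$ for every $i \geq 0$.

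Next I will pass from this statement about higher direct images to the desired equality of cohomology groups. Since $k$ is separably closed, the \'etale site of $\Spec k$ has trivial coverings, so \'etale cohomology reduces to global sections; the Leray spectral sequence for $f$ therefore degenerates and gives $\H^i(X,\Fscr) = (\R^i f_\ast \Fscr)(\Spec k)$. The analogous statement holds over $K$, yielding $\H^i(X_K, g'^\ast\Fscr) = (\R^i f'_\ast g'^\ast \Fscr)(\Spec K)$. (Here I am reading $\H^i(X_K,\Fscr)$ in the statement as the cohomology of the pullback $g'^\ast\Fscr$, which is the only reasonable interpretation.)

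Finally, for any \'etale sheaf $\Gscr$ on $\Spec k$, the sections $(g^\ast \Gscr)(\Spec K)$ are canonically identified with $\Gscr(\Spec k)$: both categories of \'etale sheaves are equivalent to the category of abelian groups via the global sections functor, and under these equivalences $g^\ast$ corresponds to the identity. Applied to $\Gscr = \R^i f_\ast \Fscr$ together with the base change isomorphism above, this yields
\[
\H^i(X,\Fscr) \;=\; (\R^i f_\ast \Fscr)(\Spec k) \;=\; (g^\ast \R^i f_\ast \Fscr)(\Spec K) \;\cong\; (\R^i f'_\ast g'^\ast \Fscr)(\Spec K) \;=\; \H^i(X_K,\Fscr),
\]
which is the claim. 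There is no genuine obstacle here: all the work is contained in Proposition~\ref{prop proper BC}, and the corollary is simply the specialization of that base change statement to the case where the base is a separably closed field and one passes to a larger such field.
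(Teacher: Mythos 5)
Your proof is correct and is exactly the argument the paper intends: the corollary is stated without proof as an immediate specialization of Proposition~\ref{prop proper BC} to the square with base $\Spec K \to \Spec k$, combined with the standard identification of \'etale cohomology over a separably closed field with global sections of the higher direct images. The bookkeeping you supply (Leray degeneration over a separably closed base and the triviality of $g^\ast$ on sheaves over separably closed fields) is the right way to fill in the omitted details.
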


Using the language of \cite[Exp.~XVII,~\S~4]{SGA4.3}, we may reformulate the above proposition as follows.

\begin{proposition}[cf.~{\cite[Exp.~XVII,~Thm.~4.3.1]{SGA4.3}}] \label{prop proper BC 2}
  Let
 \begin{center}
  \begin{tikzcd}
   X' \arrow{r}{g'} \arrow{d}{f'} & X \arrow{d}{f} \\
   S' \arrow{r}{g} & S
  \end{tikzcd}
 \end{center}
 be a Cartesian diagram with $f$ separated and universally closed. Let $\Ascr$ and $\Ascr'$ be \'etale ring sheaves on $S$ and $S'$, respectively, and let $L$ be a complex of torsion ($\Ascr',g^\ast\Ascr$)-bimodules which is bounded from above. We assume that the fibre dimension of $f$ is bounded. Then the base change morphism of functors from $\D^-(X,f^\ast \Ascr) \to \D^-(S',\Ascr')$
 \[
  \varphi_K \colon L \otimes^{\L} \L g^\ast \R f_\ast K \to \R f'_\ast (f'^\ast L \otimes^\L \L g'^{\ast} K)
 \] 
 defined in (4.2.13.2) of \cite[Exp.~XVII]{SGA4.3} restricts to an isomorphism $\D_{\rm tors}^-(X,f^\ast \Ascr) \to \D_{\rm tors}^-(S',\Ascr')$.
\end{proposition}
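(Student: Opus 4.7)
The plan is to deduce this from Proposition~\ref{prop proper BC} via Deligne's standard way-out argument; the proof largely mirrors \cite[Exp.~XVII,~Thm.~4.3.1]{SGA4.3} once the input from universally closed base change and a cohomological dimension bound are in place.

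First I would reduce to the case $L = \Ascr'$. By construction in \cite[Exp.~XVII,~(4.2.13.2)]{SGA4.3}, the morphism $\varphi_K$ factors as the composition of the naive base-change map
\[
 \psi_K \colon \L g^\ast \R f_\ast K \lto \R f'_\ast \L g'^\ast K
\]
with the projection formula $f'^\ast L \otimes^\L \R f'_\ast(-) \to \R f'_\ast(f'^\ast L \otimes^\L -)$. The projection formula is an isomorphism on bounded-above torsion complexes as soon as $\R f'_\ast$ has bounded cohomological dimension on torsion sheaves (a way-out argument in $L$). So it suffices to show that $\psi_K$ is an isomorphism for every $K \in \D^-_{\rm tors}(X,f^\ast\Ascr)$.

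Next I would verify the cohomological dimension bound. Since Proposition~\ref{prop proper BC} identifies the stalks of $\R^q f_\ast \Fscr$ at geometric points of $S$ with the \'etale cohomology of the corresponding geometric fibre of $f$, and since we assumed the fibres of $f$ have dimension bounded by some $d$, it suffices to know that each such geometric fibre has \'etale cohomological dimension $\leq 2d$ for torsion sheaves. Using the Temkin decomposition $f = p \circ f_0$ of Proposition~\ref{prop temkin decomposition} with $p$ proper and $f_0$ integral, the fibres of $f_0$ over geometric points are spectra of integral extensions of separably closed fields (filtered limits of finite sums of purely inseparable extensions), hence cohomologically trivial in positive degrees; the Artin vanishing theorem for the proper part then supplies the bound. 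The same reasoning applies to $\R f'_\ast$.

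With finite cohomological dimension in hand, both $\L g^\ast \R f_\ast$ and $\R f'_\ast \L g'^\ast$ are triangulated functors on $\D^-_{\rm tors}(X,f^\ast\Ascr)$ which are way-out in both directions. By the standard way-out lemma (cf.~\cite[Exp.~XVII,~4.1]{SGA4.3}) it therefore suffices to check that $\psi_K$ is an isomorphism when $K = \Fscr[0]$ is a single torsion sheaf in degree $0$; taking cohomology sheaves, this reduces precisely to the isomorphism $g^\ast \R^q f_\ast \Fscr \isom \R^q f'_\ast g'^\ast \Fscr$ of Proposition~\ref{prop proper BC}, finishing the argument. The main obstacle throughout is securing the cohomological dimension bound in the finite-expansion setting, where geometric fibres need not be of finite type over a field; once the Temkin decomposition handles this, everything else is formal.
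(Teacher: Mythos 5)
Your proposal is correct and follows essentially the same route as the paper, which simply runs Deligne's proof of \cite[Exp.~XVII,~Thm.~4.3.1]{SGA4.3} verbatim with Proposition~\ref{prop proper BC} substituted for the proper base change theorem; your dévissage in $L$ and $K$ and the cohomological dimension bound via the Temkin decomposition (which the paper records separately as Lemma~\ref{lemma cd of higher direct image}) are exactly the ingredients that make Deligne's argument go through. The only caveat is that the projection-formula step for $\R f'_\ast$ must itself be justified by the same way-out reduction to $L$ free (it is false for general morphisms), but that is part of the standard argument you are invoking.
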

\begin{proof}
 The proof is identical to the one of \cite[Exp.~XVII,~Thm.~4.3.1]{SGA4.3} after one replaces the reference to the proper base change theorem with Proposition~\ref{prop proper BC}.
\end{proof}

As in the case of proper morphisms, we obtain an upper bound on the cohomological dimension of $\R f_\ast$ for $f$ universally closed.

\begin{lemma} \label{lemma cd of higher direct image} 
  Let $f\colon X \to S$ be a separated universally closed morphism of qcqs schemes such that all fibres of $f$ have at most dimension $d$. Then $\R f_\ast$ is of cohomological dimension $\leq 2d$.  
\end{lemma}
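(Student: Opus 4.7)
The plan is to reduce the statement to a bound on the cohomological dimension of the geometric fibres of $f$, and then to obtain this bound from the classical result for proper morphisms via the decomposition in Proposition~\ref{prop temkin decomposition} together with a passage to the limit.

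First, it suffices to show that $\R^q f_\ast \Fscr = 0$ for every torsion \'etale sheaf $\Fscr$ on $X$ and every $q > 2d$, since this implies the analogous bound for complexes via the hypercohomology spectral sequence (note that $f$ has finite cohomological dimension in the usual sense; since $S$ is qcqs, $f_\ast$ preserves filtered colimits of torsion sheaves, so the passage from sheaves to complexes is harmless). The vanishing of $\R^q f_\ast \Fscr$ can be checked on stalks at geometric points $\bar{s} \to S$. By the universally closed base change theorem (Proposition~\ref{prop proper BC}), one has
\[
 (\R^q f_\ast \Fscr)_{\bar{s}} \;\cong\; \H^q(X_{\bar{s}}, \Fscr|_{X_{\bar{s}}}),
\]
where $X_{\bar{s}}$ is a separated, universally closed scheme over the separably closed field $\kappa(\bar{s})$, of dimension $\leq d$.

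Next I would reduce the bound on $X_{\bar{s}}$ to the classical proper case. By Corollary~\ref{cor universally closed satisfies finiteness property}, $X_{\bar{s}}$ is of finite expansion over $\kappa(\bar{s})$, so Proposition~\ref{prop temkin decomposition}(a) lets us write its structure morphism as an integral morphism followed by a proper one. Writing the integral morphism as a cofiltered limit of finite morphisms and replacing the target of each finite step by the scheme-theoretic image (which remains proper of dimension $\leq d$, since an integral surjection preserves dimension), one obtains a presentation $X_{\bar{s}} = \varprojlim X_\lambda$ as a cofiltered limit of proper $\kappa(\bar{s})$-schemes of dimension $\leq d$ with affine transition maps. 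The classical bound for proper morphisms (SGA~4, Exp.~X) gives $\H^q(X_\lambda, \Gscr_\lambda) = 0$ for $q > 2d$ and every torsion sheaf $\Gscr_\lambda$ on $X_\lambda$.

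Finally, by the continuity theorem for \'etale cohomology under cofiltered limits of qcqs schemes with affine transition morphisms (SGA~4, Exp.~VII), any torsion sheaf on the limit is, up to cohomology, a filtered colimit of pullbacks of torsion sheaves from the $X_\lambda$, and
\[
 \H^q(X_{\bar{s}}, \Fscr|_{X_{\bar{s}}}) \;\cong\; \varinjlim_\lambda \H^q(X_\lambda, \Fscr_\lambda).
\]
Combining with the previous step yields $\H^q(X_{\bar{s}}, \Fscr|_{X_{\bar{s}}}) = 0$ for $q > 2d$, which gives the desired vanishing of $\R^q f_\ast \Fscr$ and hence the claimed cohomological dimension bound. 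The main obstacle is the bookkeeping in the third step: one must verify that the presentation of $X_{\bar{s}}$ as a cofiltered limit obtained from Proposition~\ref{prop temkin decomposition} has proper layers of dimension at most $d$, and that the sheaf $\Fscr|_{X_{\bar{s}}}$ can be approximated at the level of cohomology by torsion sheaves coming from finite stages; both points are standard but require explicit reference to the continuity results of SGA~4.
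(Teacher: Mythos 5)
Your argument is correct, but it takes a longer route than the paper. The paper's proof is a two-line reduction: apply Proposition~\ref{prop temkin decomposition}(a) to $f$ itself to write $f = h \circ g$ with $g$ integral and $h$ proper, and then conclude by the Leray spectral sequence, since $\R g_\ast$ has cohomological dimension $0$ (integral morphisms) and $\R h_\ast$ has cohomological dimension $\leq 2d$ (the classical bound for proper morphisms), both quoted from the Stacks Project. You instead first localise at a geometric point of $S$ using the universally closed base change theorem (Proposition~\ref{prop proper BC}), and only then invoke the integral-proper decomposition, on the fibre $X_{\bar s}$, to present it as a cofiltered limit of proper $\kappa(\bar s)$-schemes of dimension $\leq d$ and conclude by continuity of \'etale cohomology. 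What your route buys is that you only ever need the \emph{absolute} bound $\H^q(Z,\Gscr)=0$ for $q>2\dim Z$ for finite type schemes over a separably closed field, rather than the relative cohomological dimension bound for proper morphisms over a general base; the price is the limit bookkeeping (scheme-theoretic images, dimension preservation under integral surjections, approximation of the sheaf by pullbacks from finite stages), all of which you handle or at least correctly flag. Both proofs rest on the same structural input, Temkin's decomposition; the paper's version is shorter because it stays at the level of relative statements and lets the Leray spectral sequence do the work.
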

\begin{proof}
 Using Proposition~\ref{prop temkin decomposition} and Leray's spectral sequence, it suffices to consider the two cases where $f$ is proper and where $f$ is integral. Both cases are well known (see for example \cite[Tags 095U, 04C2]{AlgStackProj}).
\end{proof}

\section{Grothendieck's six operations}

The arguments in the construction of $\R f_!$ and $\R f^!$ for a separated morphism of finite expansion $f\colon X \to Y$ are mostly identical to Deligne's construction in \cite{SGA4.3} for finite type morphisms, where the assertions in section~\ref{sect-univclosedbc} substitute their finite type analogues. For the reader's convenience (and in order to convince him that we can indeed generalise the construction), we will nevertheless repeat some of the arguments of \cite{SGA4.3} rather than simply refer to them.

\subsection{Higher direct images with compact support.} \label{sect-lowershriek}

We fix a base scheme $S$ and an \'etale torsion ring sheaf $\Ascr$ on $S$. Denote by $(S)$ the subcategory of the category of qcqs $S$-schemes whose objects are the same and whose morphisms are the separated morphisms of finite expansion. Similarly, let $(S,p)$ and $(S,i)$ denote the subcategory of qcqs $S$-schemes with separated universally closed $S$-morphisms and open $S$-embeddings, respectively. 

Assume $f\colon X \to Y$ is a morphism in either $(S,p)$ or $(S,i)$. In these cases we define the push-forward with compact support $\R f_!\colon \D(X,\Ascr_X) \to \D(Y,\Ascr_Y)$ as follows. If $f$ is a morphism in $(S,p)$, we define $\R f_! \coloneqq \R f_\ast$; if $f$ is in $(S,i)$, we define $\R f_! = f_!$ as the extension by zero. We would like to extend these definition all morphisms in $(S)$ by composition.

In order to prove that $\R f_!$ is well-defined for a general morphism $f$ of finite expansion,  we consider a commutative diagram in $(S)$
 \begin{equation} \label{diag compatibility}
  \begin{tikzcd}
   U \arrow[hook]{r}{j'} \arrow{d}{p'} & X \arrow{d}{p} \\
   V \arrow[hook]{r}{j} & Y
  \end{tikzcd}
 \end{equation}
where the horizontal morphisms are in $(S,i)$ and the vertical morphisms are in $(S,p)$. Following \cite[Exp.~XVII,~5.1.5]{SGA4.3}, we construct a natural comparison morphism $d\colon j_{!} \R p'_\ast K \to \R p_\ast j'_! K$. Since $j_!$ is left adjoint to $j^\ast$, constructing $d$ is the same as giving a morphism $d'\colon \R p'_\ast K \to j^\ast \R p_\ast j'_! K$. We extend the above diagram by $X_0 \coloneqq X \times_Y V$ to
  \begin{center}
  \begin{tikzcd}
   U \arrow[hook]{r}{k} \arrow{d}{p'} & X_0 \arrow{d}{p_0} \arrow[hook]{r}{j_0} & X \arrow{d}{p} \\
   V \arrow[equal]{r} & V \arrow[hook]{r}{j} & Y.
  \end{tikzcd}
 \end{center}
We get $j^\ast \R p_\ast j'_! K \cong \R p_{0\ast} j_0^\ast j'_! K \cong \R p_{0\ast} k_! K$. Since $k$ is an open and closed immersion, we have $k_\ast = k_!$ and thus we obtain $\R p_{0\ast} k_! K = \R p_{0\ast} k_\ast K \cong \R p'_\ast K$. Now we define $d'$ as the inverse of this chain of isomorphisms and thus obtain $d$.

\begin{lemma}[cf. {\cite[Exp.~XVII,~Lemme~5.1.6]{SGA4.3}}]
 The morphism $d$ is an isomorphism.
\end{lemma}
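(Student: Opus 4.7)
The approach is to verify that the comparison morphism $d$ is an isomorphism by restricting separately to $V$ and to its complement. Over $V$, applying $j^\ast$ to $d$ recovers by construction the inverse of the chain
\[
 \R p'_\ast K \xleftarrow{\sim} \R p_{0\ast} k_\ast K \xleftarrow{\sim} \R p_{0\ast} k_! K \xleftarrow{\sim} \R p_{0\ast} j_0^\ast j'_! K \xleftarrow{\sim} j^\ast \R p_\ast j'_! K,
\]
so it suffices to justify each link. Leray's spectral sequence for $p' = p_0 \circ k$ gives the leftmost arrow, universally closed base change (Proposition~\ref{prop proper BC 2}) applied to the Cartesian right-hand square gives the rightmost, and the middle identity $j_0^\ast j'_! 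K = k_! K$ is formal from the factorization $j' = j_0 \circ k$ of open immersions into $X$, with $X_0 = p^{-1}(V)$.

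The only nontrivial link is $k_\ast K = k_! K$, which holds precisely when $k\colon U \to X_0$ is both an open and a closed immersion. The morphism $k$ is an open immersion because $j' = j_0 \circ k$ is an open immersion and $j_0$ (the base change of $j$ along $p$) is an open immersion. On the other hand, the relation $p' = p_0 \circ k$ with $p'$ universally closed and $p_0$ separated (as a base change of $p$) lets one factor $k$ through its graph $\Gamma_k\colon U \to U \times_V X_0$: the graph is a closed immersion by separatedness of $p_0$, while the second projection $U \times_V X_0 \to X_0$ is universally closed as a pull-back of $p'$. Hence $k$ is universally closed, and an open immersion with closed image is clopen, so $k_\ast = k_!$ as required.

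Off $V$, the sheaf $j_! \R p'_\ast K$ vanishes by construction. For the other side, let $\bar y \to Y$ be a geometric point not factoring through $V$; then Proposition~\ref{prop proper BC 2} gives
\[
 (\R p_\ast j'_! K)_{\bar y} \cong \R \Gamma(X_{\bar y}, (j'_! K)|_{X_{\bar y}}),
\]
and the commutativity $p \circ j' = j \circ p'$ forces $j'(U) \subseteq p^{-1}(V)$, so $X_{\bar y} \cap j'(U) = \emptyset$ and the restriction vanishes. The principal obstacle in this plan is the verification that $k$ is clopen; this is the one step that makes essential use of the universal closedness of $p'$, whereas everything else reduces to a formal manipulation together with two invocations of universally closed base change.
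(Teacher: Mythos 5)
Your proof is correct and takes essentially the same route as the paper: the restriction of $d$ to $V$ is an isomorphism by the very construction of $d'$, and the real content is the vanishing of $\R p_\ast j'_! K$ over $Y\setminus V$, which you obtain from universally closed base change (Proposition~\ref{prop proper BC 2}) exactly as the paper does (you check it on geometric points, the paper restricts to the closed complement). Your verification that $k$ is a clopen immersion---via cancellation against the separated $p_0$ and the universally closed $p'$---supplies a detail the paper only asserts in the construction preceding the lemma, and is a welcome addition.
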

\begin{proof}
 It remains to show that the restriction of $d$ to $Z \coloneqq Y\setminus V$ is an isomorphism. Since $\restr{j_{!} \R p'_\ast K}{Z} = 0$, this assertion is equivalent to $ \restr{\R p_\ast j'_! K}{Z} = 0$. For this we apply Proposition~\ref{prop proper BC 2} to the Cartesian square
 \begin{center}
  \begin{tikzcd}
   X'' \arrow{r}{i'} \arrow{d}{p''} & X  \arrow{d}{p} \\
   Z  \arrow{r}{i} & Y
  \end{tikzcd}
 \end{center}
 for $\Ascr' = i^\ast \Ascr$ and $L = \Ascr'[0]$. We obtain $i^\ast \R p_\ast j'_! K \cong  \R p''_\ast i'^\ast j'_! K = 0$, proving the lemma.
\end{proof}

We can now apply the theory developed in \cite[Exp.~XVII,~\S~3]{SGA4.3} to obtain a well-defined functor $\R f_! \colon \D (X,\Ascr_X) \to \D(Y,\Ascr_Y)$ for every morphism $f\colon X \to Y$ in $(S)$. More precisely, the following holds.

\begin{theorem}[cf.~{\cite[Exp.~XVII,~Thm.~5.1.8]{SGA4.3}}] \label{thm-lowershriek}
 One can assign to every morphism $f$ in the category $(S)$ a functor
 \[
  \R f_! \colon \D (X,\Ascr_X) \to \D (Y,\Ascr_Y)
 \]
 and to every composition $f = g \circ h$ an isomorphism $c_{g,h} \colon \R f_! \to \R g_! \circ \R h_!$ in an essentially unique way such that the following conditions are satisfied.
 \begin{assertionlist}
  \item[(i)] The assignment $(f,g) \mapsto c_{f,g}$ satisfies the cocycle condition
  \[ 
   (c_{f,g} \ast \R h_!) \circ c_{fg,h} = (\R f_! \ast c_{g,h}) \circ c_{f,gh}.
  \]
  \item[(ii)] If $f$ is a universally closed morphism (or an open immersion, respectively) $\R f_!$ coincides with the definition above.
  \item[(iii)] For every diagram (\ref{diag compatibility}), the morphism $d$ equals $c_{p,j'} \circ c_{j,p'}^{-1}$
 \end{assertionlist}
 Moreover, the functors $\R f_!$ are triangulated and for every $n \in \ZZ$ there exists an $m \in \ZZ$ such that if $K \in \D(X,\Ascr_X)$ with $\H^i(K) = 0$ for every $i < n$ then $\H^j(\R f_! K) = 0$ for every $j<m$. In particular $\R f_!$ restricts to a functor $\D^+ (X,\Ascr_X) \to \D^+ (Y,\Ascr_Y)$. 
\end{theorem}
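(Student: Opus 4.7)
The plan is to invoke, essentially verbatim, the abstract gluing formalism developed in \cite[Exp.~XVII,~\S~3]{SGA4.3}. That machinery takes as input two subcategories $P$ and $I$ of a category $C$, each equipped with a functor into some target $2$-category, together with a compatibility isomorphism for mixed Cartesian squares, subject to a short list of coherence axioms; it produces an essentially unique functor on $C$ extending both, together with the cocycle data of (i). In our setting $C = (S)$, $P = (S,p)$, $I = (S,i)$, the two functors are $\R p_\ast$ and $j_!$, and the compatibility isomorphism is the morphism $d$ of the preceding lemma.

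First I would verify the input hypotheses. Every morphism of $(S)$ factors as $p \circ j$ with $j \in (S,i)$ and $p \in (S,p)$ by the compactification theorem of section~\ref{sect-ZMT}; the subcategories $(S,p)$ and $(S,i)$ are stable under base change inside $(S)$; and for every diagram (\ref{diag compatibility}) the comparison $d\colon j_! \R p'_\ast K \isom \R p_\ast j'_! K$ has just been shown to be an isomorphism. Once these are in place, properties (ii) and (iii) are built into the formalism by construction, and (i) is the cocycle condition forced by the machinery.

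The main obstacle is checking the coherence axioms for $d$, i.e.\ the compatibilities that arise when one considers $3$-fold composites of the form $p' \circ p \circ j$, $p \circ j' \circ j$, and their iterates, where one must ensure that the two possible ways of assembling $d$ from pairs agree. This reduces to diagram chases using the adjunction $(j_!, j^\ast)$ together with Proposition~\ref{prop proper BC 2} applied to the auxiliary Cartesian squares; since our Proposition~\ref{prop proper BC 2} substitutes perfectly for the proper base change theorem invoked in \cite[Exp.~XVII,~5.1.5--5.1.8]{SGA4.3}, the arguments go through unchanged. No new geometric input is needed beyond the results already assembled in sections~\ref{sect basic properties}--\ref{sect-univclosedbc}.

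Finally, for the triangulated structure and the cohomological amplitude estimate, I would argue as follows. For any factorisation $f = p \circ j$ the extension by zero $j_!$ is exact, and $\R p_\ast$ is triangulated of cohomological dimension at most $2d$ where $d$ bounds the fibre dimension of $p$, by Lemma~\ref{lemma cd of higher direct image}. A morphism of finite expansion has fibres of bounded dimension (via Proposition~\ref{prop temkin decomposition}, since an integral morphism has fibres of dimension~$0$), and this bound is inherited by any compactification $p$. Hence $\R f_! = \R p_\ast \circ j_!$ has bounded cohomological amplitude in the asserted sense, and in particular restricts to a functor $\D^+(X,\Ascr_X) \to \D^+(Y,\Ascr_Y)$.
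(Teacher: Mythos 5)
Your proposal matches the paper's proof: the paper likewise constructs $\R f_!$ by feeding $F(X) = \D(X,\Ascr_X)$, the functors $\R p_\ast$ and $j_!$ on $(S,p)$ and $(S,i)$, and the comparison isomorphism $d$ into Deligne's gluing machinery \cite[Exp.~XVII,~Prop.~3.3.2]{SGA4.3}, with the compactification theorem and the universally closed base change theorem supplying exactly the hypotheses you identify. Your closing argument for the cohomological amplitude via Proposition~\ref{prop temkin decomposition} and Lemma~\ref{lemma cd of higher direct image} is also the intended one, so the proposal is correct and essentially identical in approach.
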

\begin{proof}
 This is an application of \cite[Exp.~XVII,~Prop.~3.3.2]{SGA4.3} with the following data:
 \begin{bulletlist}
  \item $F(X) \coloneqq \D (X,\Ascr_X)$.
  \item For any morphism $f$ in $(S,p)$ (and $(S,i)$, respectively) $f_\ast \coloneqq \R f_!$, as defined above.
  \item The transitivity isomorphisms for morphisms in $(S,p)$ (and $(S,i)$, respectively) are the usual ones.
  \item For every commutative diagram (\ref{diag compatibility}), we define the isomorphism $d$ as above.
 \end{bulletlist}
\end{proof}

\begin{remark}
 The construction of $d$ and the proof of Lemma~2.2 does not need that $\Ascr$ is torsion, as long as all functors are defined and the cohomology groups of $K$ are torsion. Thus we could also set $F(X) \coloneqq \D^+_{\rm tors} (X,\ZZ_X)$ in the proof of above theorem to obtain a family of functors
 \[
  \R f_! \colon \D_{\rm tors}^+ (X,\ZZ_X) \to  \D_{\rm tors}^+(Y,\ZZ_Y)
 \] 
 Note that for any $S$-morphism $X \to Y$ (higher) direct images and extension by zero commute with the forgetful functor $\D(X, \Ascr_X) \to \D(X, \ZZ_X)$ and $\D(Y,\Ascr_Y) \to \D(Y, \ZZ_Y)$ , respectively; thus the uniqueness of $\R f_!$ tells us that the diagram
 \begin{center}
  \begin{tikzcd}
   \D^+(X,\Ascr_X) \arrow{r} \arrow{d}{\R f_!} & \D^+_{\rm tors}(X,\ZZ_X) \arrow{d}{\R f_!} \\
   \D^+(Y,\Ascr_Y) \arrow{r} & \D^+_{\rm tors}(X,\ZZ_Y)
  \end{tikzcd}
 \end{center}
 commutes. In other words, the construction of $\R f_!$ is independent of $\Ascr$.
\end{remark}

\begin{definition}
  Let $f\colon X\to Y$ a morphism in $(S)$. We call the functor $\R f_!$ as constructed above the higher direct image with proper support of $f$.  We define the $q$-th direct image with proper support $\R^q f_!$ as the composition of $\R f_!$ with the $q$-th cohomology functor. In the special case where $Y= \Spec k$ for a separably closed field $k$, we may also write $\H_c^q(X,-) \coloneqq \R^q f_!$ and call it the $q$-th cohomology group with compact support. 
\end{definition}

As a direct corollary of Lemma~\ref{lemma cd of higher direct image}, we obtain the finiteness of the cohomological dimension of $\R f_!$.

\begin{lemma} \label{lemma cd of direct image with proper support}
 Let $f\colon X \to Y$ be a morphism in $(S)$ of relative dimension $\leq d$. Then $\R f_!$ is of cohomological dimension $\leq 2d$. In particular, if $X$ is of finite expansion over a separably closed field $k$, then $\H_c^i(X,\Fscr) = 0$ for any \'etale torsion sheaf $\Fscr$ and $i> 2\dim X$.
\end{lemma}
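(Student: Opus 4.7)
The plan is to reduce fibrewise to the case $Y = \Spec k$ for a separably closed field $k$ using universally closed base change (Proposition~\ref{prop proper BC}), and then treat that case directly via compactification.

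First, I choose a compactification $f = \bar{f} \circ j$ with $j$ an open immersion and $\bar{f}$ separated and universally closed (the compactification theorem at the end of Section~\ref{sect-ZMT}). By construction, $\R f_! = \R \bar{f}_\ast \circ j_!$ (Theorem~\ref{thm-lowershriek}); since $j_!$ is exact, it suffices to show that $\R^i \bar{f}_\ast(j_! \Fscr) = 0$ for every torsion sheaf $\Fscr$ and every $i > 2d$. This vanishing is a stalkwise statement: given a geometric point $\bar{y} \to Y$, Proposition~\ref{prop proper BC} applied to the Cartesian square obtained by pulling $\bar{f}$ back along $\bar{y}$, combined with the fact that extension-by-zero commutes with pullback, yields
\[
 (\R^i f_! \Fscr)_{\bar{y}} \;\cong\; \H_c^i(X_{\bar{y}}, \Fscr|_{X_{\bar{y}}}).
\]
Since $\dim X_{\bar{y}} \leq d$, this reduces the general statement to the ``in particular'' part.

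For the case $Y = \Spec k$ with $k$ separably closed, I would take any compactification $j\colon X \hookrightarrow \bar{X}$ over $\Spec k$ and then replace $\bar{X}$ by the schematic closure of $j(X)$. After this replacement $X$ is dense in $\bar{X}$, so every irreducible component of $\bar{X}$ meets $X$ in a nonempty open subset and hence has dimension at most $\dim X \leq d$; in particular $\dim \bar{X} \leq d$. Thus $\bar{X} \to \Spec k$ is separated universally closed with its unique (geometric) fibre of dimension at most $d$, and Lemma~\ref{lemma cd of higher direct image} gives $\H^i(\bar{X}, j_! \Fscr) = 0$ for $i > 2d$. By the definition of $\R f_!$ this cohomology group equals $\H_c^i(X, \Fscr)$, which completes the proof.

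The main technical point is controlling the dimension of compactifications. Over a field this is immediate via the schematic closure, but constructing a global compactification $\bar{f}$ of relative dimension $\leq d$ uniformly over $Y$ appears more delicate; my approach circumvents this issue by reducing to geometric fibres via the already-established universally closed base change rather than attempting to refine the compactification itself.
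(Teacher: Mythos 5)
Your argument is correct and, if anything, more careful than the paper's own treatment: the paper presents the lemma as a direct corollary of Lemma~\ref{lemma cd of higher direct image} applied to a compactification $f = \bar{f}\circ j$, which silently presupposes a compactification whose fibres still have dimension $\le d$ --- and that is not automatic, since the closure of $X$ in a badly chosen $\bar{X}$ can acquire larger fibres (think of $\mathbb{A}^2\setminus\{0\}\hookrightarrow Y=\mathbb{A}^2$ compactified through the blow-up of the origin). Your fibrewise reduction via Proposition~\ref{prop proper BC}, identifying $(\R^i f_!\Fscr)_{\bar{y}}$ with $\H^i_c(X_{\bar{y}},\Fscr|_{X_{\bar{y}}})$ and only then choosing a compactification over the field, is the standard device (as in \cite{SGA4.3}) for obtaining the bound $2d$ rather than twice the relative dimension of some auxiliary $\bar{X}$, so it genuinely repairs the gap rather than merely rephrasing the paper.

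The one step you should justify more fully is the inequality $\dim\bar{X}\le\dim X$ after replacing $\bar{X}$ by the closure of $X$. For a general scheme a dense open subset can have strictly smaller dimension (the generic point of the spectrum of a rank-one valuation ring is a dense open of dimension $0$ in a space of dimension $1$), so ``meets $X$ in a nonempty open subset, hence has dimension at most $\dim X$'' is not a formal consequence of density. It is true here because $\bar{X}$ is of finite expansion over $k$ (Corollary~\ref{cor universally closed satisfies finiteness property}): writing an affine chart of an irreducible component, reduced, as an integral extension of a finite type $k$-domain, its dimension equals the transcendence degree over $k$ of its function field, and that quantity is unchanged on passing to a nonempty open subset. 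With this observation inserted, your proof is complete.
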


\subsection{Base change} \label{sect-bc}

In order to study the behavious of $\R f_!$ with respect to base change, let $g\colon S \to S'$ be a morphism of qcqs schemes and let $\Ascr$ and $\Ascr'$ be \'etale torsion ring sheaves on $S$ and $S'$, respectively. We fix a morphism of finite expansion $f\colon X \to S$; we choose a decomposition $f = p \circ j$ with $j\colon X \mono \Xbar$ an open immersion and $p\colon \Xbar \to S$ separated universally closed. Now fix a morphism $g\colon S' \to S$ and denote by $X',\Xbar',f',g',\gbar,j',p'$ the respective pullbacks, i.e.\ we have a commutative diagram
\begin{center}
 \begin{tikzcd}
  X' \arrow{rrr}{g'} \arrow[hook]{dr}{j'} \arrow{dd}{f'} & & & X \arrow[hook]{rd}{j} \arrow[crossing over]{dd}{f} & \\
  & \Xbar' \arrow{dl}{p'} \arrow{rrr}{\gbar} & & & \Xbar \arrow{dl}{p} \\
  S' \arrow{rrr}{g} & & & S &
 \end{tikzcd}
\end{center}
where all rectangles are Cartesian. The base change morphism will be defined as an isomorphism of the functors $\D^- (X,\Ascr_X) \to \D^-(S',\Ascr'), K \mapsto L \otimes^{\L} g^\ast \R f_! K$ and $K \mapsto \R f_!(f'^\ast L \otimes^{\L} g'^\ast K)$, where  $L$ is a bounded above complex of $(\Ascr',f^\ast\Ascr)$-bimodules over $S'$. For this we first we consider the baby case where $f = j$ and $p = \id$. The composite of the canonical isomorphisms $g'^\ast j'_! \isom j_! g^\ast$ and $L \otimes^{\L}_{g^\ast \Ascr} j'_! (-) \isom j'_!(j'^\ast  L \otimes^{\L}_{f^\ast \Ascr} (-))$ (for details see \cite[Exp.~XVII~5.2.1]{SGA4.3}) yields a natural isomorphism
\begin{equation} \label{eq BC special}
 L \otimes^{\L}_{g^\ast \Ascr} g^\ast j_! K \isom j'_!(j'^\ast  L \otimes^{\L}_{f^\ast \Ascr} g'^\ast K) .
\end{equation}
In the general case define the base change isomorphism  as the composite
\begin{equation} \label{eq BC}
 L \otimes^{\L} g^\ast \R p_\ast j_! K  \xrightarrow[Prop.~\ref{prop proper BC 2}]{\sim} \R p'_\ast (p'^\ast L \otimes^{\L} \gbar^\ast j_! K)
  \xrightarrow[(\ref{eq BC special})]{\sim} \R p'_\ast j'_! (f'^\ast L \otimes^{\L} \gbar^\ast K).
\end{equation}
Following the argumentation in \cite[Exp.~XVII~\S~5.2]{SGA4.3}, we obtain the following compatibility results.

\begin{lemma}
 In the above setting, the following assertions hold.
 \begin{subenv}
 \item The isomorphism (\ref{eq BC}) does not depend on the choice of $j$ and $p$.
 \item If $f = u \circ v$ where $u\colon Y \to S$ and $v \colon X \to Y$ are of finite expansion, the base change morphisms with respect to $u,v$ and $f$ fit inside a commutative diagram
 \begin{center}
  \begin{tikzcd}[column sep = small]
   L \otimes^{\L} g^\ast \R f_! K \arrow{rr}{\sim} \arrow[equal]{d}& & \R f_! (f'^\ast L \otimes^\L g''^\ast K) \arrow[equal]{d} \\
   L \otimes^{\L} g^\ast \R u_! \R v_! K \arrow{r}{\sim} & \R u'_! (u'^\ast L \otimes^{\L} g'^\ast \R v_! K) \arrow{r}{\sim} & \R u'_! \R v'_! (v'^\ast u'^\ast L \otimes^{\L} g''^\ast K)
  \end{tikzcd}
 \end{center}
 \item Assume we have morphisms of ringed schemes $(S'',\Ascr'') \xrightarrow{r}{g_2} (S',\Ascr') \xrightarrow{r}{g_1} (S,\Ascr)$ and $f\colon X \to S$ is a morphism of finite expansion. We consider the Cartesian diagram
 \begin{center}
  \begin{tikzcd}
   X'' \arrow{r}{g_2'} \arrow{d}{f'} & X' \arrow{r}{g_1'} \arrow{d}{f'} & X \arrow{d}{f} \\
   S'' \arrow{r}{g_2} & S' \arrow{r}{g_1} & S.
  \end{tikzcd}
 \end{center}
 Then the base change morphisms with respect to $g_1,g_2$ and $g = g_1 \circ g_2$ fit inside a commutative diagram
 \begin{center}
  \begin{tikzcd}
   \L g^\ast \R f_! \arrow{rr}{\sim} \arrow[equal]{d} & & \R f''_\ast g'^\ast \arrow{d}{\sim}\\
   \L g_2^\ast \L g_1^\ast \R f_! \arrow{r}{\sim} &\L g_2^\ast \R f'_! \L g_1'^\ast \arrow{r}{\sim} & \R f''_! \L g'^\ast .
  \end{tikzcd}
 \end{center}
 \end{subenv}
 \end{lemma}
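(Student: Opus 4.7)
The plan is to follow the layout of the argument given in \cite[Exp.~XVII, \S5.2]{SGA4.3}, replacing each appeal to the classical proper base change theorem by Proposition~\ref{prop proper BC 2}.

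For (1), given two compactifications $f = p_1 \circ j_1 = p_2 \circ j_2$ I would first construct a third compactification dominating both: concretely, let $\Xbar_3$ be the scheme-theoretic closure of the image of $(j_1, j_2)\colon X \to \Xbar_1 \times_S \Xbar_2$, which is separated and universally closed over $S$ and contains $X$ as an open subscheme via some $j_3$. The projections yield morphisms $\pi_i \colon \Xbar_3 \to \Xbar_i$ with $j_i = \pi_i \circ j_3$ and $p_i \circ \pi_i = p_3$, so it suffices to verify that if $\pi$ sends one compactification to another then the two associated base change isomorphisms agree. This amounts to the naturality of~(\ref{eq BC special}) in $j$, which is formal since $j_!$ is a left adjoint, combined with the naturality of Proposition~\ref{prop proper BC 2} in $p$, which is visible from its construction.

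For (2), I would choose compactifications $u = p_u \circ j_u$ and $v = p_v \circ j_v$, and then apply Nagata-style compactification (cf.~\cite[Thm.~4.1]{Conrad:NagataDeligne}) to extend $p_v\colon \Xbar_v \to Y$ along $j_u\colon Y \mono \Ybar_u$ to a factorisation $j_u \circ p_v = \tilde{p}_v \circ \tilde{\jmath}_v$ with $\tilde{\jmath}_v\colon \Xbar_v \mono Z$ an open immersion and $\tilde{p}_v\colon Z \to \Ybar_u$ separated and universally closed. This yields a compactification $f = (p_u \circ \tilde{p}_v) \circ (\tilde{\jmath}_v \circ j_v)$. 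By~(1) I may use these specific choices when computing each of the three base change isomorphisms in the diagram, so the required commutativity follows by decomposing~(\ref{eq BC}) for $f$ as the vertical composite of the base change squares for $u$ and $v$, using the compatibility of Proposition~\ref{prop proper BC 2} with composition of universally closed morphisms and the functoriality of~(\ref{eq BC special}) under composition of open immersions.

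For (3), fix a compactification $f = p \circ j$; pulling back successively along $g_1$ and $g_2$ yields compatible compactifications of $f'$ and $f''$. Both of the base change isomorphisms $L \otimes^{\L} g^\ast \R f_! K \isom \R f''_! (f''^\ast L \otimes^{\L} g''^\ast K)$ then decompose via~(\ref{eq BC}) as a composite of the base change for $\R p_\ast$ supplied by Proposition~\ref{prop proper BC 2} and the one for $j_!$ supplied by~(\ref{eq BC special}), and the assertion follows from the analogous compatibilities for those two functors individually. The main obstacle throughout is~(1): once independence of the chosen compactification is established, (2) and (3) become diagram chases. The delicate point in (1) is that the base change morphism must be natural in morphisms of compactifications; naturality of~(\ref{eq BC special}) in $j$ is automatic since it arises from adjunctions and the projection formula, but naturality of Proposition~\ref{prop proper BC 2} in $p$ requires unpacking its construction from~\cite[Exp.~XVII, (4.2.13.2)]{SGA4.3} enough to verify that the comparison morphism is functorial in the source of $p$.
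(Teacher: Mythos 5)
Your outline follows the same route as the paper, which simply observes that the proofs of \cite[Exp.~XVII, Lemme~5.2.3--5.2.5]{SGA4.3} are formal consequences of the constructions (dominating compactification for independence, compactification of the middle map for composition in $f$, pulled-back compactifications for composition in $g$), with Proposition~\ref{prop proper BC 2} replacing the classical proper base change input; so the substance of your argument is correct and is essentially what the paper intends.

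One citation in part~(2) would fail as literally stated: you invoke Nagata compactification \cite[Thm.~4.1]{Conrad:NagataDeligne} to factor $j_u \circ p_v \colon \Xbar_v \to \Ybar_u$, but $p_v$ is only separated and universally closed, not of finite type, so Nagata's theorem does not apply. The morphism $j_u \circ p_v$ is, however, a separated morphism of finite expansion between qcqs schemes (by Corollary~\ref{cor universally closed satisfies finiteness property} and stability under composition), so the required factorisation into an open immersion followed by a separated universally closed morphism is supplied by the compactification theorem of Section~\ref{sect-ZMT}. With that substitution the rest of your argument, including the reduction of~(1) to morphisms of compactifications via the scheme-theoretic closure in $\Xbar_1 \times_S \Xbar_2$ and the naturality of~(\ref{eq BC special}) and of the morphism of Proposition~\ref{prop proper BC 2}, goes through as in \cite{SGA4.3}.
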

 
\begin{proof}
 The proof is analogous to the proofs of \cite[Exp.~XVII~Lemme~5.2.3-5]{SGA4.3}, which are purely formal consequences of the constructions above.
\end{proof}
 
We may reformulate the above result as follows.

\begin{proposition} \label{prop BC isom}
 There is a natural isomorphism $L \otimes^{\L} g^\ast \R f_!K \isom \R f_!(f'^\ast L \otimes^{\L} g'^\ast K)$ of functors $\D^-(X,\Ascr_X) \to \D^-(S',\Ascr')$ which is compatible with composition.
\end{proposition}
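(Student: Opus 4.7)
The plan is to deduce Proposition~\ref{prop BC isom} essentially as a repackaging of the previous lemma. Given a morphism $f\colon X \to S$ of finite expansion, the key point is that the comparison morphism in~(\ref{eq BC}) has been constructed already using a choice of compactification $f = p \circ j$, so my task reduces to extracting a well-defined natural transformation from this construction and verifying the required compatibilities.

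First, I would define the base change morphism $\beta_{f,g}(L,K) \colon L \otimes^{\L} g^\ast \R f_! K \to \R f'_!(f'^\ast L \otimes^{\L} g'^\ast K)$ as the composite~(\ref{eq BC}) associated to any chosen compactification. By Lemma~2.4~(i) this construction is independent of the choice of $j$ and $p$, so we obtain a canonical morphism that is functorial in $L$ and $K$ since each of the factors in~(\ref{eq BC}) (namely the universally closed base change of Proposition~\ref{prop proper BC 2} and the projection formula~(\ref{eq BC special}) for the open immersion $j$) is natural in its arguments. The fact that $\beta_{f,g}$ is an isomorphism is immediate from the construction, since each factor in~(\ref{eq BC}) is explicitly an isomorphism.

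Second, I would verify the asserted compatibility with composition. Given $f = u \circ v$, the composition isomorphism $\R f_! \cong \R u_! \circ \R v_!$ is furnished by Theorem~\ref{thm-lowershriek}; and Lemma~2.4~(ii) tells us exactly that $\beta_{f,g}$ factors through $\beta_{u,g}$ and $\beta_{v,g'}$ compatibly with this decomposition. A similar invocation of Lemma~2.4~(iii) handles the compatibility with composition on the base, $g = g_1 \circ g_2$.

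The main (minor) obstacle is purely organisational: one needs to ensure the choice of compactification is coherent enough for Lemma~2.4~(ii) to apply when decomposing $f = u \circ v$. Concretely, after choosing compactifications $u = q \circ \ell$ and $v = p \circ j$, one must produce a compatible compactification of $f$. This is handled exactly as in \cite[Exp.~XVII~\S~5.2]{SGA4.3}: pull back $\ell$ along $p$ to find an open immersion $\ell' \colon \Xbar \hookrightarrow \Ybar$ into a scheme proper over the target of $q$, and compose with $j$ to obtain $f = (q \circ \ell' \circ p) \circ j$, which is of the required form since universally closed morphisms are stable under composition and open immersions compose. Once this diagram is in place, all remaining content has been established in Lemma~2.4, and the proposition follows.
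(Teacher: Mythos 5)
Your proposal is correct and follows essentially the same route as the paper: the paper presents Proposition~\ref{prop BC isom} explicitly as a reformulation of the preceding lemma, whose three parts supply exactly the independence of the compactification, the compatibility with composition of $f$, and the compatibility with composition of $g$ that you invoke, with each factor of (\ref{eq BC}) already an isomorphism by construction. Your closing remark on splicing together compactifications of $u$ and $v$ is the same formal point the paper delegates to \cite[Exp.~XVII~\S~5.2]{SGA4.3}.
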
 
 
\begin{corollary}[Projection formula]
 Let $S$ be qcqs and $f\colon X \to S$ be a morphism of finite expansion. For $K \in \D^- (X,\Ascr_X)$ and $L \in \D^- (S,\Ascr)$ there exists natural isomorphism $ \R f_!K \otimes_{\Ascr} L \cong \R f_! (f^\ast K \otimes L)$
\end{corollary}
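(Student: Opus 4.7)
The plan is to derive the projection formula as a direct specialisation of the base change isomorphism in Proposition~\ref{prop BC isom}. I would apply that proposition to the identity morphism $g = \id_S\colon S \to S$ with $\Ascr' = \Ascr$. In this degenerate situation the Cartesian diagram collapses completely: $S' = S$, $X' = X$, $g' = \id_X$, $f' = f$, and in particular $f'^\ast L = f^\ast L$. The condition that $L$ be a bounded above complex of $(\Ascr', g^\ast\Ascr)$-bimodules on $S'$ reduces, by commutativity of $\Ascr$, to $L \in \D^-(S,\Ascr)$, matching exactly the hypothesis of the corollary.

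Substituting these identifications into the natural isomorphism of Proposition~\ref{prop BC isom} then yields
\[
L \otimes^{\L}_{\Ascr} \R f_! K \;\xrightarrow{\;\sim\;}\; \R f_!\bigl(f^\ast L \otimes^{\L}_{\Ascr_X} K\bigr),
\]
and applying the symmetry isomorphism of the derived tensor product on both sides rearranges this into the stated form $\R f_! K \otimes_{\Ascr} L \cong \R f_!(K \otimes f^\ast L)$.

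I do not expect any essential obstacle: the full content of the projection formula is already encoded in the base change isomorphism. Naturality in $K$ and $L$, as well as compatibility with composition of morphisms in $(S)$, are inherited automatically from the corresponding properties recorded in the lemma preceding Proposition~\ref{prop BC isom}. The only small point to verify is that the hypotheses on $L$ genuinely specialise as claimed when $g = \id_S$, which is immediate.
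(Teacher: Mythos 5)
Your proposal is correct and is exactly the paper's own argument: the projection formula is obtained as the special case $g=\id_S$ of Proposition~\ref{prop BC isom}, with the identifications $S'=S$, $X'=X$, $f'=f$ collapsing the base change isomorphism to the stated form. Your additional remark that the $(\Ascr',g^\ast\Ascr)$-bimodule hypothesis on $L$ degenerates to $L\in\D^-(S,\Ascr)$ is a correct (and worthwhile) sanity check, but the route is the same.
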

\begin{proof}
 This is the special case $Y=S$ and $g=\id_S$.
\end{proof} 
 
\begin{corollary}[Base change formula]
 Let 
 \begin{center}
  \begin{tikzcd}
   X' \arrow{d}{f'} \arrow{r}{g'} & X \arrow{d}{f} \\
   S' \arrow{r}{g} & S
  \end{tikzcd}
 \end{center}
 be a Cartesian square as above. Then there exists a canonical isomorphism $g^\ast \R f_! K \isom \R f'_! g'^\ast K$.
\end{corollary}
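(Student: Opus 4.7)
The plan is to deduce this corollary as a direct special case of Proposition~\ref{prop BC isom}. Specifically, I will apply Proposition~\ref{prop BC isom} in the situation where the bimodule $L$ is taken to be the unit, i.e.\ the ring sheaf $\Ascr' = g^{\ast}\Ascr$ placed in degree zero, regarded as a $(g^{\ast}\Ascr, g^{\ast}\Ascr)$-bimodule over $S'$.

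With this choice, both sides of the isomorphism in Proposition~\ref{prop BC isom} simplify. On the left-hand side, $L \otimes^{\L}_{g^{\ast}\Ascr} g^{\ast} \R f_! K = g^{\ast}\Ascr[0] \otimes^{\L}_{g^{\ast}\Ascr} g^{\ast} \R f_! K$ is canonically isomorphic to $g^{\ast} \R f_! K$ since the tensor product with the structure sheaf is the identity. On the right-hand side, $f'^{\ast} L = f'^{\ast} g^{\ast} \Ascr = g'^{\ast} f^{\ast} \Ascr$ is the structure sheaf on $X'$, and thus $f'^{\ast} L \otimes^{\L} g'^{\ast} K \cong g'^{\ast} K$. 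Substituting these identifications into Proposition~\ref{prop BC isom} yields the desired natural isomorphism $g^{\ast} \R f_! K \isom \R f'_! g'^{\ast} K$.

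There is no real obstacle here: the entire work was done in establishing Proposition~\ref{prop BC isom}, whose proof in turn rested on the universally closed base change theorem (Proposition~\ref{prop proper BC 2}) combined with the trivial base change for open immersions. The only point to double-check is that the ring sheaf $\Ascr'$ is indeed a legitimate choice of bounded-above complex of $(\Ascr', f^{\ast}\Ascr)$-bimodules in the sense used in Proposition~\ref{prop BC isom}, which is immediate from the ring structure of $\Ascr$ and the fact that $f^{\ast}$ commutes with pullback along $g$. Hence the corollary follows with essentially no additional argument.
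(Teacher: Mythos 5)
Your proposal is correct and matches the paper's own proof, which likewise obtains the corollary by specialising Proposition~\ref{prop BC isom} to $L = \Ascr_{S'}$ (the pullback ring sheaf in degree zero) and noting that tensoring with the unit is the identity on both sides. No further comment is needed.
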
 
\begin{proof}
 We apply the above proposition to $L = \Ascr_{S'}$.
\end{proof} 
 
\begin{corollary}
 Let $S$ be qcqs and $f\colon X \to S$ be of finite expansion. For any geometric point $\ybar$ of $Y$ and $q \in \NN_0$ there exists a canonical isomorphism $(\R^q f_! \Fscr)_{\ybar} \cong \H^q_c(X_{\ybar},\Fscr_{\ybar})$.
\end{corollary}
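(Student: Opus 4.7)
The plan is to deduce this as an essentially immediate consequence of the base change formula established in the preceding corollary, combined with the definition of $\H^q_c$ over a separably closed field. First, I would write the geometric point as $\ybar = \Spec k$ for a separably closed field $k$, with structural morphism $g\colon \ybar \to S$. Since $\Spec k$ is affine, $\ybar$ is qcqs, so the Cartesian square with base changes $f'\colon X_{\ybar} \to \ybar$ and $g'\colon X_{\ybar} \to X$ falls within the scope of Proposition~\ref{prop BC isom} (which only requires $f$, not $g$, to be of finite expansion). The base change corollary then supplies a canonical isomorphism $g^\ast \R f_! \Fscr \cong \R f'_!\, \Fscr_{\ybar}$ in $\D^+(\ybar, \Ascr_{\ybar})$.

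Second, I would pass to $q$-th cohomology. Pullback of \'etale sheaves is exact, so $g^\ast$ commutes with $\H^q$; evaluated on the left this recovers $(\R^q f_!\Fscr)_{\ybar}$, since for an \'etale sheaf on the spectrum of a separably closed field the global sections functor coincides with evaluation at the unique geometric point. On the right, $\R^q f'_!\,\Fscr_{\ybar}$ is exactly $\H^q_c(X_{\ybar},\Fscr_{\ybar})$ by the definition introduced after Theorem~\ref{thm-lowershriek}. Combining the two identifications yields the claimed isomorphism, whose canonicity is inherited from that of the base change morphism.

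No serious obstacle arises: the hard work is already packaged in Proposition~\ref{prop proper BC 2} and the base change corollary. The only point to be careful about is the compatibility between \emph{stalk at a geometric point} and \emph{pullback to that geometric point followed by taking global sections}, but this is built into the formalism of the \'etale topos of $\Spec k$ for $k$ separably closed.
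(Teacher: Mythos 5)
Your proposal is correct and follows exactly the paper's route: the paper's proof is the one-line application of the base change corollary to $S' = \ybar$, and your additional remarks about passing to $\H^q$ and identifying the stalk at $\ybar$ with global sections over the separably closed residue field are just the standard unwindings left implicit there. No issues.
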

\begin{proof}
 We apply the above proposition to $S' = \ybar$.
\end{proof}

\begin{corollary}
 Let $X$ be a scheme of finite expansion over an separably closed field $k$ and $\Fscr$ be an \'etale torsion sheaf on $X$. Then for any separably closed field extension $k \subset K$ and any $q\in \NN_0$, there is a natural isomorphism $\H^q_c(X,\Fscr) \cong \H^q_c(X_K,\Fscr)$.
\end{corollary}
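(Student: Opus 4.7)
The plan is to deduce this from the base change formula (the corollary immediately preceding), which is the affine-type analogue of the previous corollary after Proposition~\ref{prop proper BC} stated for usual cohomology of separated universally closed schemes. Specifically, I would apply the base change formula to the Cartesian square
\begin{center}
 \begin{tikzcd}
  X_K \arrow{d}{f'} \arrow{r}{g'} & X \arrow{d}{f} \\
  \Spec K \arrow{r}{g} & \Spec k,
 \end{tikzcd}
\end{center}
where $f$ is the structure morphism (which lies in $(\Spec k)$ by assumption) and $g$ is induced by the field extension. This yields a canonical isomorphism $g^\ast \R f_! \Fscr \isom \R f'_! g'^\ast \Fscr$ in $\D(\Spec K, \Ascr_{\Spec K})$.

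The second step is to translate this isomorphism of complexes of sheaves on $\Spec K$ into the desired isomorphism of abelian groups. Since both $k$ and $K$ are separably closed, the \'etale sites of $\Spec k$ and $\Spec K$ have only one geometric point (up to equivalence), so the global section functors identify the categories of \'etale torsion sheaves on $\Spec k$ and $\Spec K$ with the category of torsion abelian groups, and under these identifications $g^\ast$ becomes the identity. Taking $q$-th cohomology of the above isomorphism and evaluating globally then gives $\H^q_c(X,\Fscr) = \R^q f_! \Fscr \cong \R^q f'_! g'^\ast \Fscr = \H^q_c(X_K,\Fscr_K)$, where $\Fscr_K$ denotes $g'^\ast \Fscr$. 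Equivalently, one may phrase the same argument by invoking the previous corollary on stalks at geometric points with $\ybar = \Spec K$, noting that the stalk of any sheaf on $\Spec k$ at the geometric point $\Spec K \to \Spec k$ equals its global sections because $k$ is separably closed.

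There is no serious obstacle here; the statement is a formal consequence of the base change isomorphism established in Proposition~\ref{prop BC isom}, which in turn rests on the universally closed base change theorem of Proposition~\ref{prop proper BC 2}. The only point requiring a moment of care is the identification of $g^\ast$ with the identity on sheaves, which uses separable closedness of both fields in an essential way (otherwise $g^\ast$ would correspond to tensoring with the \'etale site of a nontrivial Galois extension and the statement would fail in general).
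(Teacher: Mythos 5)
Your proof is correct and matches the paper's intended argument: the paper states this corollary without proof as an immediate consequence of the base change formula (equivalently, of the preceding stalk corollary applied to the geometric point $\ybar = \Spec K \to \Spec k$), which is exactly what you do. The observation that sheaves on the spectrum of a separably closed field are just abelian groups, so that $g^\ast$ is harmless, is the only content beyond Proposition~\ref{prop BC isom}, and you handle it correctly.
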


 Now let $f\colon X \to S$, $g\colon Y \to S$ be morphisms of finite expansion of qcqs schemes and denote by $h\colon P = X \times_S Y \to S $ their fibre product over $S$. With $p\colon P \to X$ and $q\colon P \to Y$ the canonical projection, we denote for $K \in \D^-(X,\Ascr_X), L \in \D^-(Y,\Ascr_Y)$
\[
 K \boxtimes_{\Ascr_P}^{\L} L \coloneqq p^\ast K \otimes^{\L}_{\Ascr} q^\ast L.
\]
As a consequence of the base change isomorphism we obtain the K\"unneth formula.

\begin{proposition}[K\"unneth formula]
In the situation above there exists a natural isomorphism
\[
 \R f_!K \otimes_{\Ascr}^{\L} \R g_!L \isom \R h_! (K \boxtimes_{\Ascr_P}^{\L} L)
\]
\end{proposition}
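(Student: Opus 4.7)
The plan is to deduce the Künneth formula as a purely formal consequence of the projection formula and the base change formula proved just above, applied to the two commuting decompositions $h = f\circ p = g\circ q$.

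First I would rewrite the right-hand side using the transitivity isomorphism of Theorem~\ref{thm-lowershriek}. Since $h = f\circ p$, one has
\[
 \R h_!\bigl(K\boxtimes^{\L}_{\Ascr_P}L\bigr) \;=\; \R h_!\bigl(p^\ast K \otimes^{\L} q^\ast L\bigr) \;\cong\; \R f_!\,\R p_!\bigl(p^\ast K \otimes^{\L} q^\ast L\bigr).
\]
Next I would apply the projection formula to the morphism $p\colon P\to X$, which gives
\[
 \R p_!\bigl(p^\ast K \otimes^{\L} q^\ast L\bigr) \;\cong\; K \otimes^{\L} \R p_!\, q^\ast L.
\]
The key geometric input is then the base change formula applied to the Cartesian square
\begin{center}
 \begin{tikzcd}
  P \arrow{r}{q} \arrow{d}{p} & Y \arrow{d}{g} \\
  X \arrow{r}{f} & S,
 \end{tikzcd}
\end{center}
which yields $\R p_!\, q^\ast L \cong f^\ast\,\R g_! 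L$. Substituting this back gives
\[
 \R h_!\bigl(K\boxtimes^{\L}_{\Ascr_P}L\bigr) \;\cong\; \R f_!\bigl(K \otimes^{\L} f^\ast\,\R g_! L\bigr),
\]
and a second application of the projection formula (this time to $f$, with $\R g_! L\in\D^-(S,\Ascr)$) turns the right-hand side into $\R f_! K \otimes^{\L} \R g_! L$, yielding the desired isomorphism.

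The only thing to verify beyond writing down the chain is that the isomorphism is natural and does not depend on which decomposition $h = f\circ p$ versus $h = g\circ q$ one starts with; symmetry of the construction, together with the compatibility of base change with composition in Proposition~\ref{prop BC isom} (and its corollary on the projection formula), makes this automatic. I do not expect any genuine obstacle here: each step is a direct citation of a result from Section~\ref{sect-bc}, and the verification is entirely formal once the base change isomorphism is known to be compatible with composition of morphisms.
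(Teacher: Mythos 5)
Your proof is correct and is essentially the paper's own argument: the paper also reduces to $\R h_!\cong \R f_!\circ\R p_!$ and then applies the base change isomorphism of Proposition~\ref{prop BC isom} twice, once as the projection formula for $f$ and once (with bimodule $K$) for the Cartesian square $P=X\times_S Y$. The only cosmetic difference is that you factor the second application into its two corollaries (projection formula for $p$ plus base change $\R p_!q^\ast\cong f^\ast\R g_!$) rather than invoking the general statement in one step.
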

\begin{proof}
  Applying the base change formula twice, we obtain
 \[
  \R f_!K \otimes_{\Ascr}^{\L} \R g_!L \isom \R f_! (K \otimes_{\Ascr_X}^{\L} f^\ast \R g_! L) \isom \R f_! \R p_!(p^\ast K \otimes^{\L} q^\ast L).
 \]
 Since $\R h_! = \R f_! \circ \R p_!$, the right hand side equals $\R h_! (K \boxtimes_{\Ascr_P}^{\L} L)$.
\end{proof}

\begin{remark}
 Alternatively, we can also construct the isomorphism of the K\"unneth formula following \cite[XVII~\S~5.4]{SGA4.3}. More precisely, the general site-theoretic construction in \cite[XVII~5.4.1]{SGA4.3} yields a morphism
\[
 \R p_\ast (j_! K) \otimes_{\Ascr}^{\L} \R q_\ast (k_! L) \to \R r_\ast (j_! K \boxtimes_{\Ascr_{\Pbar}}^{\L} k_! L )
\]
where $f = p \circ j, g = q \circ k$ and $h = r \circ k$ are decomposions into a separated universally closed morphism and an open embedding. Moreover, by applying (\ref{eq BC special}) twice, we obtain an isomorphism
\[
 j_! K \boxtimes_{\Ascr_{\Pbar}}^{\L} k_! L \isom l_! (K \boxtimes_{\Ascr_P}^\L L).
\]
Their composition yield a morphism $\R f_!K \otimes_{\Ascr}^{\L} \R g_!L \to \R h_! (K \boxtimes_{\Ascr_P}^{\L} L)$, which is precisely the K\"unneth isomorphism by \cite[XVII~Lemme~5.4.3.5]{SGA4.3}. Note that the proof of the latter result is purely site-theoretic - first one uses (\ref{eq BC special}) to reduce to the case that $f$ and $g$ are proper (or separated universally closed, respectively), so that one can replace all direct images with proper support are simply direct images. Now the claim follows formally from the construction in \cite[XVII 5.4.1]{SGA4.3}.
\end{remark} 

By a similar argument as in the remark above, we may use \cite[XVII Lemme 5.4.3.2]{SGA4.3} to deduce the following slightly more general version.

\begin{corollary}
 Let $X_0$ and $Y_0$ be qcqs $S$-schemes and $f\colon X \to X_0$, $g \colon Y \to Y_0$ be $S$-morphisms of finite expansion and $K \in \D^-(X,\Ascr_X), L \in \D^-(Y,\Ascr_Y)$. We denote $h =(f,g)\colon X \times_S Y \to X_0 \times_S Y_0$. Then there exists a natural morphism
 \[
  \R f_!K \boxtimes^{\L} \R g_!L \isom \R h_! (K \boxtimes^{\L} L).
 \]
\end{corollary}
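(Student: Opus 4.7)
The plan is to factor the morphism $h = (f,g)$ as a composition of two morphisms, each of which is a base change of either $f$ or $g$, and then derive the isomorphism from the projection formula, the base change formula, and the functoriality $\R h_! \cong \R h_{2!} \circ \R h_{1!}$ provided by Theorem~\ref{thm-lowershriek}.

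Concretely, write $h = h_2 \circ h_1$, where
\[
 h_1 \coloneqq f \times \id_Y \colon X \times_S Y \to X_0 \times_S Y, \qquad h_2 \coloneqq \id_{X_0} \times g \colon X_0 \times_S Y \to X_0 \times_S Y_0.
\]
Both $h_1$ and $h_2$ are separated of finite expansion, as this class is stable under base change. Denote by $p \colon X \times_S Y \to X$, $q \colon X \times_S Y \to Y$, $p_1 \colon X_0 \times_S Y \to X_0$, and $q_1 \colon X_0 \times_S Y \to Y$ the canonical projections, and observe that $q = q_1 \circ h_1$. Hence $K \boxtimes^{\L} L = p^\ast K \otimes^{\L} h_1^\ast q_1^\ast L$, and the projection formula yields
\[
 \R h_{1!}(K \boxtimes^{\L} L) \cong \R h_{1!}(p^\ast K) \otimes^{\L} q_1^\ast L.
\]
The relation $p_1 \circ h_1 = f \circ p$ identifies $h_1$ as the base change of $f$ along $p_1$, so the base change formula gives $\R h_{1!}(p^\ast K) \cong p_1^\ast \R f_! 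K$. Combining both isomorphisms,
\[
 \R h_{1!}(K \boxtimes^{\L} L) \cong (\R f_! K) \boxtimes^{\L} L,
\]
where now $\boxtimes$ denotes the external tensor product on $X_0 \times_S Y$. Applying the same two steps to $h_2$, viewed as the base change of $g$ along the projection $X_0 \times_S Y_0 \to Y_0$, produces
\[
 \R h_{2!}\bigl((\R f_! K) \boxtimes^{\L} L\bigr) \cong (\R f_! K) \boxtimes^{\L} (\R g_! L).
\]
Composing these two isomorphisms via the transitivity $\R h_! \cong \R h_{2!} \circ \R h_{1!}$ and inverting gives the claimed natural isomorphism.

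Naturality in $K$ and $L$ is inherited from that of the projection and base change isomorphisms. The main point I expect to require care, as anticipated by the remark preceding the statement, is verifying that this concretely assembled isomorphism coincides with the site-theoretic K\"unneth morphism constructed from compactifications of $f$ and $g$ as in \cite[XVII,~5.4.1]{SGA4.3}. This compatibility, which is exactly the content of \cite[XVII,~Lemme~5.4.3.2]{SGA4.3} in the proper case, extends to our setting by the same formal diagram chase, using the results of Section~\ref{sect-univclosedbc} in place of the proper base change theorem.
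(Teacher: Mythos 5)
Your proof is correct, but it takes a different route from the paper's. The paper deduces this corollary from the site-theoretic K\"unneth construction: following the remark that precedes it, one builds the morphism from \cite[Exp.~XVII,~5.4.1]{SGA4.3} applied to compactifications $f = p\circ j$, $g = q\circ k$, uses the isomorphism (\ref{eq BC special}) twice to identify $j_!K \boxtimes^{\L} k_!L$ with $l_!(K\boxtimes^{\L} L)$, and then invokes \cite[Exp.~XVII,~Lemme~5.4.3.2]{SGA4.3}, whose purely formal proof carries over once the proper base change theorem is replaced by the universally closed version of section~\ref{sect-univclosedbc}. You instead factor $h = (\id_{X_0}\times g)\circ(f\times\id_Y)$, recognise each factor as a base change of $f$ resp.\ $g$ along a projection, and apply the projection formula and the base change formula from Proposition~\ref{prop BC isom} to each factor in turn; this is exactly the strategy the paper uses to prove the preceding K\"unneth proposition (the absolute case over $S$), transported to the relative setting. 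Both arguments are valid and each step of yours checks out ($f\times\id_Y$ is indeed the pullback of $f$ along $X_0\times_S Y\to X_0$, and stability of ``separated of finite expansion'' under base change keeps you inside the category $(S)$). What your approach buys is self-containedness -- it needs nothing beyond the base change package already established in section~\ref{sect-bc} plus the transitivity of $\R h_!$ -- whereas the paper's approach additionally identifies the resulting isomorphism with the canonical site-theoretic K\"unneth morphism, a compatibility you correctly flag as the one point your construction does not address (and which the statement, asking only for existence of a natural isomorphism, does not strictly require).
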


\subsection{Exceptional inverse image} \label{sect-excinvim}

The construction of $\R f^!$ is completely analogous to the finite type case in \cite[Exp.~XVIII,~\S~3]{SGA4.3}, so we merely give a short sketch of the arguments and refer the reader to corresponding parts of \cite{SGA4.3}. 

Let $f \colon X \to S$ be a morphism of finite expansion of qcqs schemes. For a torsion sheaf $\Fscr$ we denote by $C_\ell^\ast(\Fscr)$ the canonical modified flasque resolution of $\Fscr$ as defined in \cite[XVIII~3.1.2]{SGA4.3}. In particular, $C_\ell^\ast$ is functorial in $\Fscr$, commutes with localisation and inductive limits; and the functors $C_\ell^n$ are exact. 

Our aim is to construct a partial right adjoint $\R f^!\colon \D^+(S,\Ascr) \to \D^+(X,\Ascr_X)$ of $\R f_!$. As usual, we cannot use the adjoint functor theorem directly, since $\D^+(\cdot)$ is not cocomplete. To construct $\R f^!$, we fix an integer $d$ such that $f$ is of relative dimension $<d$ and a decomposition $f = p \circ j$, with $p$ separated universally closed and $j$ an open immersion. Let $\Fscr$ be an torsion $\Ascr_X$-module and $\Fscr^i$ the components of $\tau_{\leq 2d} C_\ell^\ast j_! \Fscr$ where $\tau_{\leq 2d}$ denotes the (canonical) truncation functor. Then
\begin{equation} \label{eq vanishing}
 \R^k p_\ast \Fscr^i = 0
\end{equation}
for $i>0$ since for $i \neq 2d$ the sheaf $\Fcal_i$ is flasque and for $i = 2d$ the claim follows from Lemma~\ref{lemma cd of direct image with proper support}. We define the functor $f_!^\bullet$ from the category of $\Ascr_X$-modules to complexes of $\Ascr$-modules by
\[
 f_!^\bullet(\Fscr) = p_\ast \tau_{\leq 2d} (C_\ell^\ast j_! \Fscr)
\]
Analogous to \cite[XVII~Lemme~3.1.4.8]{SGA4.3}, we see that this functor has the following properties. Its cohomology groups are supported in the interval $[0,2d]$, and the functors $f^i_!$ are exact and commute with filtered colimits. In particular the right derived functor $\R f_!^\bullet\colon \D(X,\Ascr_X) \to \D(S,\Ascr_S)$ exists. Moreover, for any complex $K$ of $\Ascr_X$-sheaves the composition
\[
 \R f_!^\bullet K = \Tot p_\ast \tau_{\leq 2d} C_\ell^\ast j_! K \to  p_\ast C_\ell^\ast j_! K = \R p_\ast j_! K = \R f_!K
\]
is an isomorphism by (\ref{eq vanishing}) and the usual spectral sequence for double complexes; thus we obtain an isomorphism $\R f_!^\bullet \isom \R f_!$.

By the adjoint functor theorem, the functors $f_!^i$ have a right adjoint $f_i^!$. Since $f_!^i$ is exact, $f_i^!$ maps injectives to injectives. By the construction \cite[XVII~1.1.12]{SGA4.3} they form a complex of functors $f_\bullet^!\colon \Ascr-{\rm mod} \to K^b(\Ascr_X-{\rm mod})$. Its right derived functor $\R f^!\colon \D^+(S,\Ascr) \to \D^+(X,\Ascr_X)$ is the right adjoint of $\R f_!$, which can be checked on complexes of injective objects (cf.~\cite[XVIII~3.1.4.12]{SGA4.3}).

\begin{definition}
 The functor $\R f^!$ is called the exceptional inverse image.
\end{definition}

\begin{proposition}[{cf.~\cite[XVIII~Prop.~3.1.7]{SGA4.3}}]
 Let $f\colon X \to S $ be a morphism of finite expansion of qcqs schemes. Let $d$ be a non-negative integer such that $f$ is of relative dimension $\leq d$. Then
 \begin{subenv}
  \item If $L \in \D^+(X,\Ascr_X)$ satisfies $\H^i(L) = 0$ for $i \leq k$ then $\H^i (\R f^! L) = 0$ for $i \leq k-2d$.
  \item For every $L \in \D^+(X,\Ascr_X)$ one has $\diminj \R f^!L \leq \diminj L$.
 \end{subenv}
\end{proposition}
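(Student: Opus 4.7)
My plan is to treat the two parts by complementary approaches: for (a) I will unwind the explicit construction of $\R f^!$ as the totalisation of $f_\bullet^!$ applied to a truncated injective resolution, while for (b) I will use the $(\R f_!, \R f^!)$-adjunction together with the cohomological amplitude bound of Lemma~\ref{lemma cd of direct image with proper support}.

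For part (a), the first step will be to choose an injective resolution $L \xrightarrow{\sim} I^\bullet$ with $I^j = 0$ for all $j \leq k$; this is possible since the cohomology of $L$ vanishes below degree $k+1$, by starting from any injective resolution and applying the canonical truncation $\tau^{>k}$ (which preserves the property of being a complex of injectives). The second step is to recall that $f_\bullet^!$ is concentrated in cohomological degrees $[-2d, 0]$: this follows from the construction of \cite[XVII~1.1.12]{SGA4.3} together with the observation that $f_!^\bullet$ is by definition supported in $[0, 2d]$ and that taking termwise right adjoints reverses the indexing. Since $\R f^! L \simeq \Tot(f_\bullet^! I^\bullet)$ and each functor $f_i^!$ preserves injectives (being the right adjoint of the exact functor $f_!^i$), a direct degree count on the double complex $f_\bullet^! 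I^\bullet$ shows that its total complex is supported in degrees $\geq (k+1) + (-2d) = k - 2d + 1$, giving $\H^i(\R f^! L) = 0$ for $i \leq k - 2d$.

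For part (b), the plan is to invoke the adjunction directly. Writing $n = \diminj L$, the task is to show that $\Hom_{\D(X,\Ascr_X)}(M, \R f^! L[m]) = 0$ for every $\Ascr_X$-module $M$ and every $m > n$. The $(\R f_!, \R f^!)$-adjunction rewrites this group as $\Hom_{\D(S,\Ascr)}(\R f_! M, L[m])$, and by Lemma~\ref{lemma cd of direct image with proper support} the object $\R f_! M$ lies in $\D^{[0, 2d]}(S, \Ascr)$; in particular it is bounded above. It then suffices to upgrade the module-level vanishing $\Ext^m(N, L) = 0$ (for modules $N$ and $m > n$) to bounded above complexes in place of $N$, which is standard and follows by a d\'evissage on the stupid truncations of such a complex.

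The main obstacle I anticipate is the bookkeeping in part (a): verifying that the construction of \cite[XVII~1.1.12]{SGA4.3} really places $f_\bullet^!$ in cohomological degrees $[-2d, 0]$, since an off-by-one error would change the final bound. The rest of the argument, and the whole of (b), is essentially formal once this indexing convention is fixed.
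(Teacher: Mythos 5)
Your proof of part (a) is essentially the paper's argument (which in turn is Deligne's): it rests exactly on the two facts the paper isolates, namely that $f_\bullet^!$ is supported in $[-2d,0]$ and that each $f_i^!$ preserves injectives, combined with a degree count on the double complex $f_\bullet^! I^\bullet$ for an injective resolution $I^\bullet$ of $L$ vanishing in degrees $\leq k$. For part (b), however, you take a genuinely different route. The paper deduces (b) from the \emph{same} double-complex bookkeeping as (a): with $I^j=0$ for $j>n=\diminj L$, the total complex $\Tot(f_\bullet^! I^\bullet)$ is a complex of injectives concentrated in degrees $\leq n$, which gives the bound directly and with no further input. Your route instead goes through the $(\R f_!,\R f^!)$-adjunction and Lemma~\ref{lemma cd of direct image with proper support}; this works too, and has the mild advantage of not depending on the particular model $f_\bullet^!$ of the adjoint, but it uses more machinery (the adjunction on the level of derived categories) to prove something the explicit construction already hands you.

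One imprecision in your (b) should be fixed: the property of $\R f_! M$ you actually need is not that it is \emph{bounded above} but that it lies in $\D^{\geq 0}$ (indeed in $\D^{[0,2d]}$, as you correctly record, since $j_!M$ is a sheaf and $\R p_\ast$ has cohomology in degrees $\geq 0$). The vanishing $\Hom_{\D(S,\Ascr)}(N,L[m])=0$ for $m>\diminj L$ is \emph{false} for general bounded-above $N$: taking $N=M[j]$ with $j>0$ gives $\Ext^{m-j}(M,L)$, which need not vanish for $n<m\leq n+j$. The d\'evissage must therefore run over the truncations of a representative of $\R f_! M$ concentrated in degrees $[0,2d]$, where each graded piece contributes $\Ext^{m+j}(\H^j(\R f_! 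M),L)$ with $j\geq 0$, and these do vanish for $m>n$. With that correction your argument for (b) is complete.
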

\begin{proof}
 The proof is the same as in the finite type case, as it follows formally from the fact that $f_\bullet^!$ is supported on $[-2d,0]$ and that $f_i^!$ transforms injectives into injectives.
\end{proof}

\begin{remark}
 It is a classical result that $f^! = f^\ast$ when $f$ is \'etale. Note that this no longer holds true for weakly \'etale morphisms of finite expansion, as in general $f_!$ is not left-adjoint to $f^\ast$.
\end{remark}

 The exceptional inverse image satisfies the usual compatibility with the other five operations. The construction of the isomorphisms is again identical to the finite type case. As above, we recall the construction for the reader's convenience; but refer the reader to \cite{SGA4.3} for proofs.

 First, we briefly recall the construction of the adjunction morphism 
 \[
  \R f_\ast \R\Hom(K,\R f^! L) \isom \R\Hom(\R f_! K,L)
 \]
 for $K \in \D^-(X,\Ascr), L \in \D^+(X,\Ascr)$ as presented in \cite[XVIII 3.1.9]{SGA4.3}. First, we represent $L$ by a bounded below complex of injective sheaves (also denoted $L$). Note that
 \begin{equation} \label{eq RHom}
  \R f_\ast \R\Hom(K,  L) \cong \R f_\ast \Hom(K,L) \cong f_\ast \Hom(K,L)
 \end{equation}
 where the latter isomorphism holds as $\Hom(K,L)$ is a complex of flasque sheaves. Let $f = p \circ j$ as above and $\iota\colon V \to S$ be an \'etale morphism. By functoriality of $f_{V,!}$, we obtain a canonical morphism
 \begin{align*}
 f_\ast\underline\Hom^\bullet (K,L)(V) &= \underline\Hom^\bullet(K,L)(X_V) = \Hom^\bullet(\iota_X^\ast K,\iota_X^\ast L) \\ &\to \Hom^\bullet(f_{V\,!}\iota_X^\ast K,f_{V\,!}\iota_X^\ast L) = \underline\Hom^\bullet(f_!^\bullet K, f_!^\bullet L)(V)
 \end{align*}
and thus a morphism of functors $f_\ast\underline\Hom^\bullet(K,L) \to \underline\Hom^\bullet(f_!^\bullet K, f_!^\bullet L)$. By (\ref{eq RHom}) this induces the morphism $\R f_\ast\R \Hom(K,L) \to \R\Hom(\R f_! K, \R f_! L)$, which after replacing $L$ by $\R f^! L$ and composing with the adjunction $\R f_!  \R f^! L \to L$ becomes the wanted morphism
\begin{equation} \label{eq adj1}
 \R f_\ast \R\Hom(K,\R f^! L) \to \R\Hom(\R f_! K,\R f_!  \R f^! L ) \to \R\Hom(\R f_! K,L)
\end{equation}

\begin{proposition}[cf.~{\cite[XVIII Prop.~3.1.10]{SGA4.3}}]
 The morphism (\ref{eq adj1}) is an isomorphism.
\end{proposition}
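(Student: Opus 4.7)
The plan is to show that \eqref{eq adj1} is a quasi-isomorphism by evaluating both sides sectionwise on the \'etale site of $S$, thereby reducing the assertion to a Hom-level adjunction that is built into the construction of $\R f^!$. The key ingredients will be the (\'etale-)local description of $\R f_\ast \R\Hom$ coming from \eqref{eq RHom}, the base change isomorphism of Proposition~\ref{prop BC isom}, and a compatibility between $\R f^!$ and \'etale localization on $S$.

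First, I would fix an \'etale morphism $\iota \colon V \to S$ and write $\iota_X \colon X_V \to X$ for the pullback along $f$. Representing $L$, and hence $\R f^! L$, by a bounded below complex of injective $\Ascr$-modules (resp.\ $\Ascr_X$-modules), the left hand side of \eqref{eq adj1} evaluated at $V$ is identified, as in \eqref{eq RHom}, with the naive Hom-complex $\Hom^\bullet_{X_V}(\iota_X^\ast K,\iota_X^\ast \R f^! L)$. On the other hand, Proposition~\ref{prop BC isom} identifies the right hand side evaluated at $V$ with $\R\Hom_V(\R f_{V,!}\iota_X^\ast K,\iota^\ast L)$, where $f_V \colon X_V \to V$ is the base change of $f$. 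It therefore suffices to produce a natural isomorphism
\[
 \Hom^\bullet_{X_V}(\iota_X^\ast K, \R f_V^!(\iota^\ast L)) \isom \R\Hom_V(\R f_{V,!}\iota_X^\ast K,\iota^\ast L),
\]
functorially in $V$.

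Second, I would establish the \'etale base change compatibility $\iota_X^\ast \R f^! L \cong \R f_V^!(\iota^\ast L)$. The building blocks of $f_!^\bullet$, namely $j_!$, $C_\ell^\ast$, $\tau_{\leq 2d}$ and $p_\ast$, each commute with \'etale pullback on the base; for $p_\ast$ this is exactly the content of Proposition~\ref{prop proper BC} applied to the separated universally closed morphism $p$. Hence $f_!^\bullet$ is compatible with \'etale localization on $S$, and taking right adjoints componentwise transports this compatibility to $f^!_\bullet$, and thence to its right derived functor $\R f^!$. In particular, we may restrict attention to $V = S$ and drop the subscript $V$.

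Third, the desired isomorphism is now tautological: by construction each exact functor $f^i_!$ is left adjoint to $f^!_i$, so for every complex $K'$ of $\Ascr_X$-modules and every complex $L'$ of injective $\Ascr$-modules we have a natural identification of Hom double complexes
\[
 \Hom^\bullet_X(K', f^!_\bullet L') \isom \Hom^\bullet_S(f_!^\bullet K', L').
\]
Applying this to $K' = K$ and to an injective resolution $L'$ of $L$, and then invoking the isomorphism $\R f_!^\bullet \simeq \R f_!$ obtained earlier, converts the right hand side into $\R\Hom_S(\R f_! K, L)$, yielding the claim.

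The main obstacle is the \'etale base change step: although each functor in the construction of $f_!^\bullet$ visibly commutes with pullback along $\iota$, transporting this compatibility through the adjoint functor theorem to $f^!_\bullet$ and then through the right derived functor requires some genuine bookkeeping. This parallels the finite-type argument of \cite[XVIII, Prop.~3.1.10]{SGA4.3}; the substitutes we need (universally closed base change and the bound on cohomological dimension of Lemma~\ref{lemma cd of direct image with proper support}) are furnished by the results of Section~\ref{sect-univclosedbc} and Section~\ref{sect-lowershriek}, so Deligne's argument goes through verbatim.
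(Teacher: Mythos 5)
Your proof is correct and follows essentially the same route as the paper, whose entire proof is the remark that Deligne's argument for \cite[Exp.~XVIII, Prop.~3.1.10]{SGA4.3} is purely formal and carries over; what you have written out is exactly that formal argument (sectionwise reduction over \'etale $V \to S$ to the componentwise adjunction $f^i_! \dashv f^!_i$, together with the \'etale-localization compatibility of $f_!^\bullet$ and $\R f^!$, with universally closed base change and the cohomological dimension bound supplying the needed inputs).
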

\begin{proof}
 The proof for the finite type case is purely formal and thus still works in our case. 
\end{proof}

 Next, let $f =  p \circ j$ be a compactification of $f$ and $g\colon S' \to S$ another morphism. We consider the diagram
 \begin{center}
  \begin{tikzcd}
   X' \arrow{r}{g'} \arrow[bend right]{dd}[swap]{f'} \arrow{d}{j'} & X \arrow{d}{j} \\
   \Xbar' \arrow{r}{\gbar} \arrow{d}{p'} & \Xbar \arrow{d}{p} \\
   S' \arrow{r}{g} & S
  \end{tikzcd}
 \end{center}
 with Cartesian squares. Let $\Bscr$ be another torsion sheaf of $S'$ and $M$ a complex of $(\Ascr_{S'},\Bscr)$-bimodules which is bounded above. Let $K$ be any complex of $\Ascr_X$-modules. Following \cite[XVIII 3.1.11]{SGA4.3}, we consider the base change morphism
 \begin{align*}
  M \otimes_{\Ascr_{S'}} g^\ast f_!^\bullet K 
  &= M \otimes_{\Ascr_{S'}} g^\ast p_\ast \tau_{\leq 2d} C_l^\ast j_! K \\
  &\xrightarrow[\rm Prop.~\ref{prop proper BC 2}]{\sim} p'_\ast(p'^\ast M \otimes_{\Ascr_{S'}}  \tau_{\leq 2d} C_l^\ast \gbar^\ast j_! K )\\  
  & \to p'_\ast( \tau_{\leq 2d} C_l^\ast  p'^\ast M \otimes_{\Ascr_{S'}}  \tau_{\leq 2d} C_l^\ast \gbar^\ast j_! K ) \\
  &= p'_\ast \tau_{\leq 2d} C_l^\ast  (p'^\ast M \otimes_{\Ascr_{S'}}  \gbar^\ast j_! K ) \\  
  & \xrightarrow[(\ref{eq BC special})]{\sim} p'_\ast \tau_{\leq 2d} C_l^\ast (j'_! (j'_\ast p'_\ast M \otimes_{\Ascr_{S'}} g'^\ast K)) \\
  & = f_!^\bullet(f'^\ast M \otimes_{\Ascr_{S'}} g'^\ast K), 
 \end{align*}
 as morphisms of functors evaluated at $K$. Thus we obtain a morphism of their respective right adjoints, which evaluated at a complex of $\Bscr$-modules $L$ yields
 \[
  g'_\ast \Hom_\Bscr(f'^\ast,f_{\bullet}'^! L) \to f_\bullet^! g_\ast \Hom_{\Bscr} (M,L).
 \]
 Taking the derived functor at both sides, we obtain a natural morphism for $L \in \D^+(S',\Bscr)$
 \begin{equation} \label{eq adj2}
  \R g'_\ast \R\Hom_\Bscr(f'^\ast M,\R f'^! L) \to \R f^! \R g_\ast \R \Hom_{\Bscr}(M,L).
 \end{equation}
 
 \begin{proposition}[cf.~{\cite[XVIII Prop.~3.1.12]{SGA4.3}}]
  The morphism (\ref{eq adj2}) is an isomorphism
 \end{proposition}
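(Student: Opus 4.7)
The plan is to identify the morphism (\ref{eq adj2}) as the right adjoint of the base change isomorphism of Proposition~\ref{prop BC isom}, so that it inherits being an isomorphism from it. The point is that both (\ref{eq adj2}) and Proposition~\ref{prop BC isom} come, by the adjoint functor formalism, from the same underived natural transformation on $f_!^\bullet$; once the identification is made, nothing further is needed.

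First I would unwind the construction of (\ref{eq adj2}): by definition it is obtained from the displayed chain of morphisms of functors
\[
 M \otimes_{\Ascr_{S'}} g^\ast f_!^\bullet K \to f_!^\bullet(f'^\ast M \otimes_{\Ascr_{S'}} g'^\ast K)
\]
by taking level-wise right adjoints and then deriving. The steps in this chain are (i)~the underived proper base change of Proposition~\ref{prop proper BC 2} applied to $\tau_{\leq 2d} C_\ell^\ast j_! K$, (ii)~the insertion of a second flasque resolution $\tau_{\leq 2d}C_\ell^\ast$, which becomes a quasi-isomorphism because of the vanishing (\ref{eq vanishing}) exactly as in the argument that $\R f_!^\bullet \isom \R f_!$, and (iii)~the underived version of the open-immersion isomorphism (\ref{eq BC special}).

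Next I would check that, under the identifications $f_!^\bullet \isom \R f_!$ and $f_!'^\bullet \isom \R f'_!$, the derived version of this chain is naturally isomorphic to the base change isomorphism of Proposition~\ref{prop BC isom} applied to the bimodule $f'^\ast M$. This step is the heart of the argument and is essentially a diagram chase: each of (i), (ii), (iii) above was already used in section~\ref{sect-bc} to construct Proposition~\ref{prop BC isom} (via formula (\ref{eq BC})), so the check is compatibility of two composites of the same ingredients. With this identification in hand, (\ref{eq adj2}) is the right adjoint (level by level in the complex of right adjoints $f_\bullet^!$, derived afterwards) of an isomorphism of triangulated functors on $\D^-$, hence is itself an isomorphism on $\D^+$.

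The main obstacle is precisely this bookkeeping: one must verify that the adjoint of the chain displayed before (\ref{eq adj2}), computed term-by-term in the complex $f_\bullet^!$ and then passed to the derived category, matches the base change morphism of Proposition~\ref{prop BC isom}. As indicated in the text, this is a formal verification identical to the one in \cite[XVIII~Prop.~3.1.12]{SGA4.3}; the only inputs from our setting are Proposition~\ref{prop proper BC 2}, the vanishing (\ref{eq vanishing}), and the isomorphism $\R f_!^\bullet \cong \R f_!$, all of which are already available.
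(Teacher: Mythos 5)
Your proposal is correct and follows essentially the same route as the paper, whose proof simply defers to the formal diagram chase of \cite[XVIII~Prop.~3.1.12]{SGA4.3}: one identifies (\ref{eq adj2}) as the adjoint transpose of the base change isomorphism, with Proposition~\ref{prop proper BC 2} as the only non-formal input. The bookkeeping you flag as the main obstacle (that the termwise right adjoints, derived afterwards, really compute the adjoint of $\R f_!$ at the derived level, using that the $f_i^!$ preserve injectives) is exactly the content of that diagram chase, so nothing is missing.
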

 \begin{proof}
  The proof is the same as in the finite type case, as it follows from a diagram chase using only the base change theorem as input.
 \end{proof}

 We obtain two classical compatibilities of the above isomorphism. If $S = S'$ and $\Bscr = \Ascr$, we obtain
 \[
  \R\Hom(f^\ast K , \R f^! L) \isom \R f^! \R\Hom(K,L).
 \]
 For general $S'$ and  $\Bscr = \Ascr_X = M$, the above isomorphism becomes
 \[
 \R g'_\ast\R f'^! L \isom \R f^! \R g_\ast L.
 \]

\section{Comparison results for the \'etale cohomology} \label{sect-comparison}

\subsection{Comparison with ad-hoc constructions} \label{sect-comparison-ad-hoc}

Let $f\colon X \to \Spec k$ be a separated morphism of finite expansion where $k$ is an separably closed field. We extend the definition of compactly supported cohomology on $X$ given in the previous section to $l$-adic sheaves following Jannsen's construction in \cite{Jannsen:EtCoh}. Denote by $(\ZZ_X\textnormal{-mod})^\NN$ the category of projective systems $(\Fscr_k)_{k \in \NN}$ of Abelian \'etale sheaves on $X$. Note that $(\ZZ_X\textnormal{-mod})^\NN$ contains the category of $l$-adic sheaves. By \cite[\S~1]{Jannsen:EtCoh} the functor $f_\ast \colon (\ZZ_X\textnormal{-mod})^\NN \to (Ab), (\Fscr_k)_{k\in\NN} \mapsto \varprojlim \Gamma(X,\Fscr_k)$ has a right derived functor $\R f_\ast$. We fix a factorisation $f = p \circ j$ where $j$ is an open embedding an $p$ a separated universally closed morphism. For any $l$-adic sheaf $\Fscr = (\Fscr_k)_{k\in \NN}$ on $X$ we define
\[
 \R\Gamma_c (\Fscr) \coloneqq \R p_\ast (j_! \Fscr).
\]
Note that this construction does not depend on the choice of $p$ and $j$ as a consequence of the base change theorem for universally closed morphisms.

\begin{definition}
 Let $l$ be a prime and $\Fscr$ an $l$-adic sheaf on $X$. Then we define
 \[
  \H_c^q(X,\Fscr) \coloneqq \R^q \Gamma_c (\Fscr)
 \] 
\end{definition}

We choose a presentation $X = \varprojlim X_\lambda$ as the limit of finite type schemes with finite transition morphisms $f_{\lambda,\lambda'}$ and  assume that $\Fscr$ is a torsion sheaf or $l$-adic sheaf on the \'etale site of $X$ which is the pullback of a sheaf $\Fscr_{\lambda_0}$ on the \'etale site of $X_{\lambda_0}$.
 In this case one usually uses the ad hoc definition
\[
 H_c^q(X,\Fscr) \coloneqq \varinjlim \H_c^q(X_\lambda, f_{\lambda,0}^\ast \Fscr).
\]
If $\Fscr$ is a torsion sheaf, this definition is compatible with our general definition given in the previous section by the proposition below. If $\Fscr$ is $l$-adic, our definition above yields Emerton's compactly supported completed cohomology groups
 \[
  \Htilde_c^q(X,\Fscr) \coloneqq \varprojlim \varinjlim \H_c^q(X_\lambda, f_{\lambda,0}^\ast \Fscr \otimes \ZZ/l^k\ZZ),
 \]
 see for example \cite{CalegariEmerton:ComplCoh} for further discussions of its properties. 
 
 \begin{proposition} \quad
  \begin{subenv}
   \item If $\Fscr$ is a torsion sheaf, there exists a natural isomorphism $\H_c^q(X,\Fscr) \isom H_c^q(X,\Fscr)$.
   \item If $\Fscr$ is an $l$-adic sheaf, there exists a natural isomorphism $\H_c^q(X,\Fscr) \isom \Htilde_c^q (X,\Fscr)$.
  \end{subenv}
 \end{proposition}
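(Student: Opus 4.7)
The plan is to handle part (a) first by realising both sides as cohomology of an open-support sheaf on a suitable compactification, then bootstrap to part (b) via the short exact sequence relating Jannsen's derived limit to the naive projective limit.

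For part (a), the key technical step is to produce a compatible system of compactifications. I would choose a separated universally closed $\pbar\colon \Xbar \to \Spec k$ and an open embedding $j\colon X \hookrightarrow \Xbar$ via the compactification theorem of section~1.2. Inspecting its proof, $\Xbar$ arises as $\Specfunc_{\Ybar}(\Ascr)$ for a quasi-coherent $\Oscr_{\Ybar}$-algebra $\Ascr$ on a proper $k$-scheme $\Ybar$; writing $\Ascr = \varinjlim \Ascr_\mu$ as a filtered union of finite $\Oscr_{\Ybar}$-subalgebras presents $\Xbar = \varprojlim_\mu \Xbar_\mu$ as a cofiltered inverse limit of proper $k$-schemes with affine transition morphisms. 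Since $X \subset \Xbar$ is a quasi-compact open, the standard approximation results for cofiltered limits of qcqs schemes (EGA~IV, \S~8) descend $X$ to a quasi-compact open $X_{\mu_0} \subset \Xbar_{\mu_0}$; setting $X_\mu := X_{\mu_0} \times_{\Xbar_{\mu_0}} \Xbar_\mu$ for $\mu \geq \mu_0$ yields a compatible system of compactifications $j_\mu\colon X_\mu \hookrightarrow \Xbar_\mu$ of finite-type $k$-schemes with $X = \varprojlim X_\mu$. Because the ad hoc definition $H_c^q(X,\Fscr)$ depends only on the pro-object via cofinality, it suffices to prove the isomorphism for this chosen presentation. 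Now applying the classical limit theorem for \'etale cohomology of torsion sheaves along cofiltered systems with affine transitions (SGA~4, Exp.~VII, Cor.~5.8) to $\Xbar = \varprojlim \Xbar_\mu$ and the torsion sheaves $j_{\mu,!}\Fscr_\mu$, and using that $j_!$ commutes with the open pullbacks in question so that the system of sheaves restricts to $j_!\Fscr$ on $\Xbar$, we conclude
\[
\H_c^q(X,\Fscr) = \H^q(\Xbar, j_!\Fscr) \cong \varinjlim_\mu \H^q(\Xbar_\mu, j_{\mu,!}\Fscr_\mu) = \varinjlim_\mu \H_c^q(X_\mu, \Fscr_\mu) = H_c^q(X,\Fscr).
\]

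For part (b), write $\Fscr = (\Fscr_k)_k$ as a projective system of sheaves of $\ZZ/l^k$-modules with surjective transitions. Applying part (a) termwise gives $\varinjlim_\lambda \H_c^q(X_\lambda, \Fscr_{\lambda,k}) \cong \H_c^q(X,\Fscr_k)$ for each $k$, so that $\Htilde_c^q(X,\Fscr) = \varprojlim_k \H_c^q(X,\Fscr_k)$. On the derived side, Jannsen's definition decomposes $\Gamma_c$ as the composite $\varprojlim_k \circ \Gamma \circ \pbar_\ast \circ j_!$ on $(\ZZ_X\textnormal{-mod})^\NN$; the Grothendieck spectral sequence of this composite, combined with the vanishing $\R^p\varprojlim_k = 0$ for $p\geq 2$ on $\NN$-indexed systems, produces a short exact sequence
\[
0 \to \R^1\varprojlim_k \H_c^{q-1}(X,\Fscr_k) \to \H_c^q(X,\Fscr) \to \varprojlim_k \H_c^q(X,\Fscr_k) \to 0.
\]
The isomorphism with $\Htilde_c^q$ thus reduces to the Mittag-Leffler vanishing $\R^1\varprojlim_k \H_c^{q-1}(X,\Fscr_k) = 0$. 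At each finite level $\lambda$ the group $\H_c^{q-1}(X_\lambda, \Fscr_{\lambda,k})$ is finite, so Mittag-Leffler holds $\lambda$-wise; my plan is to transfer this to the filtered colimit in $\lambda$ by exploiting the surjectivity of the $l$-adic transition maps and the compatibility of the long exact sequences arising from $0 \to \Fscr_n \xrightarrow{l^k} \Fscr_{n+k} \to \Fscr_k \to 0$ with filtered colimits.

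The hardest part will be this Mittag-Leffler step in (b), since $\R^1\varprojlim$ does not commute with filtered colimits in general and the cohomology groups over the non-finite-type scheme $X$ need not themselves be finite. Part (a), by contrast, reduces essentially mechanically to the standard étale limit theorem once the compatible compactifications are constructed.
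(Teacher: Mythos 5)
For part (a) you take a genuinely different route from the paper. You build a compatible tower of compactifications $j_\mu\colon X_\mu\hookrightarrow\Xbar_\mu$ with $\Xbar=\varprojlim\Xbar_\mu$ proper at each stage, and apply the limit theorem for \'etale cohomology on $\Xbar$ to the sheaves $j_{\mu,!}\Fscr_\mu$. The paper instead never compactifies the tower: it uses the already-established composition formalism to write $\R\Gamma_c(X,\Fscr)=\R\Gamma_c(X_{\lambda_0},\R f_{\lambda_0\ast}\Fscr)$ (the projection $f_{\lambda_0}$ being integral, hence $\R f_{\lambda_0!}=\R f_{\lambda_0\ast}$), applies $\varinjlim_\lambda\R f_{\lambda,\lambda_0\ast}f_{\lambda,\lambda_0}^\ast\Fscr_{\lambda_0}\isom\R f_{\lambda_0\ast}\Fscr$ (SGA~4, Exp.~VII), and commutes $\R\Gamma_c$ on the fixed finite-type scheme $X_{\lambda_0}$ past the filtered colimit. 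Your approach works but carries extra overhead that the paper's avoids: you must check that $\Fscr$ descends along \emph{your} tower and, more importantly, your appeal to ``cofinality'' to identify $\varinjlim_\mu\H_c^q(X_\mu,\Fscr_\mu)$ with the ad hoc $H_c^q(X,\Fscr)$ defined via the \emph{given} presentation $X=\varprojlim X_\lambda$ is not literally a cofinality statement --- the two towers are not subsystems of a common directed system, and comparing them (by interleaving, say) is a real step you should spell out. The paper's proof sidesteps this because it computes directly with the given tower.

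For part (b) there is a genuine gap exactly where you flag ``the hardest part.'' Your reduction via Jannsen's short exact sequence $0\to\R^1\varprojlim_k\H_c^{q-1}(X,\Fscr_k)\to\H_c^q(X,\Fscr)\to\varprojlim_k\H_c^q(X,\Fscr_k)\to 0$ is correct, but you do not actually prove the Mittag--Leffler vanishing; you only propose to ``exploit the surjectivity of the $l$-adic transition maps,'' and surjectivity of $\Fscr_{n+1}\to\Fscr_n$ as sheaves does not give surjectivity, or even image-stabilisation, on $\H_c^{q}$. As you yourself observe, the level-wise finiteness of $\H_c^{q}(X_\lambda,\Fscr_{\lambda_0,n})$ does not transfer, since neither finiteness nor the Mittag--Leffler property passes through the filtered colimit in $\lambda$. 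The paper's resolution is to establish instead the identity $\H_c^q(X,\Fscr_{n+m})\otimes\ZZ/l^n\ZZ\cong\H_c^q(X,\Fscr_n)$: this holds at each finite level $\lambda$ and, unlike Mittag--Leffler, it is a statement about tensor products and hence \emph{does} commute with $\varinjlim_\lambda$; it then forces the transition maps of the system $(\H_c^q(X,\Fscr_n))_n$ to be surjective, so $\R^1\varprojlim$ vanishes and Jannsen's Prop.~1.6 applies. Without an argument of this kind (one that is stable under the colimit in $\lambda$), your part (b) is incomplete.
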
 
 \begin{proof}
  First assume that $\Fscr$ is a torsion sheaf. We denote by $f_{\lambda_0}\colon X \to X_{\lambda_0}$ the canonical projection. Then  the canonical morphism $ \varinjlim \R f_{\lambda,\lambda_0\, \ast}f_{\lambda,\lambda_0}^\ast \Fscr_{\lambda_0} \to \R f_{\lambda_0\, \ast} \Fscr $ is an isomorphism by \cite[VII,~Cor.~5.11]{SGA4.2} 
  and hence we get
  \[
\begin{multlined}
 \R\Gamma_c(X,\Fscr) = \R\Gamma_c (X,\R f_{\lambda_0,\ast} \Fscr)  \\ \cong \varinjlim \R\Gamma_c (X_{\lambda_0}, \R f_{\lambda,\lambda_0\, \ast}f_{\lambda,\lambda_0}^\ast \Fscr_{\lambda_0}) = \varinjlim \R\Gamma_c(X_\lambda, f_{\lambda,\lambda_0}^\ast \Fscr_{\lambda_0})).
\end{multlined}
  \]
This yields on cohomology groups the wanted natural isomorphism  $\H_c^q(X,\Fscr) \cong H_c^q(X,\Fscr)$. 

  If $\Fscr = (\Fscr_n)_{n \in \NN}$ is an $l$-adic sheaf, we have
  \begin{align*}
   \H_c^q(X,\Fscr_{n+m}) \otimes \ZZ/l^n \ZZ &\cong \varinjlim ( \H_c^q(X_\lambda,f_{\lambda,\lambda_0}^\ast\Fscr_{\lambda_0,n+m}) \otimes \ZZ/l^n\ZZ) \\ &\cong \varinjlim \H_c^q(X_\lambda,f_{\lambda,\lambda_0}^\ast\Fscr_{\lambda_0,n})  \cong \H_c^q(X,\Fscr_\lambda)
  \end{align*}
  In particular the compactly supported cohomology groups of $\Fscr$ form a Mittag-Leffler system and thus $\H_c^q(X,\Fscr) \cong \varprojlim \H_c^q(X,\Fscr_n)$ by \cite[Prop.~1.6]{Jannsen:EtCoh}. Thus the second assertion follows from the first part of the proposition.
 \end{proof}

\begin{remark}
 We also gave an ad-hoc definition of the functor $\R f_!$ for torsion sheaves in \cite[Appendix B]{HamacherKim:Mantovan}. While it covers all morphisms of finite expansion, we only proved compatibility results for some special cases.
\end{remark}

\subsection{Comparison with sheaf cohomology on the analytification} \label{sect-comparison-Artin}

Let $X$ be a scheme over $\CC$. Then its analytification $X_{\rm an}$ is defined to be the topological space\footnote{Note that, despite the notation, $X_{\rm an}$ is in general not an analytic space unless $X$ is of finite type} with underlying set $X(\CC)$ and the coarsest topology such that for every open subscheme $U \subset X$ and $f \in \Oscr_X(U)$ the subset $U(\CC) \subset X(\CC)$ is open and the induced map $f_{\rm an}\colon U(\CC) \to \AA^1(\CC) = \CC$ is continuous for the analytic topology on $\CC$. In the case that $X$ is affine the topology coincides with the initial topology with respect to $\{f_{\rm an}\colon X(\CC) \to \CC \mid f \in \Oscr_X(X)\}$. Indeed, any principal open set $D(f) \subset X$ is open in the initial topology as $D(f) = f^{-1}(\CC\setminus \{0\})$ and any section $\frac{g}{f} \in \Oscr_X(D(f))$ is continuous as it is the quotient of two continuous functions. One deduces that if $X$ is of finite type, the topology on $X_{\rm an}$ coincides with the usual definition of analytic topology since the latter is defined on an open affine cover as the initial topology with respect to the coordinate functions. 

One easily checks by reducing to the affine case that for every morphism of $\CC$-schemes $\phi\colon X \to Y$ the induced morphism $\phi_{\rm an}\colon X_{\rm an} \to Y_{\rm an}$ is continuous. Thus the analytification defines a functor $(-)_{\rm an}$  from the category of $\CC$-schemes to the category of topological spaces. It satisfies the following exactness properties.
\begin{lemma}
 $(-)_{\rm an}$ commutes with fibre products and limits of filtered projective systems having affine transition maps.
\end{lemma}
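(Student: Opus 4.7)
My plan is to prove both assertions by reducing to the affine case and then manipulating the initial-topology description directly.

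For fibre products $X \times_S Y$, the set-level identity $(X \times_S Y)(\CC) = X(\CC) \times_{S(\CC)} Y(\CC)$ is a formal consequence of the universal property evaluated at $\Spec \CC$. To compare topologies, I would choose compatible affine open covers $S = \bigcup W_k$, $X = \bigcup U_{i}$ with each $U_i$ lying over some $W_k$, and similarly $Y = \bigcup V_j$. Then $\{(U_i \times_{W_k} V_j)_{\rm an}\}$ is an open cover of both $(X \times_S Y)_{\rm an}$ and of $X_{\rm an} \times_{S_{\rm an}} Y_{\rm an}$, so it suffices to treat the affine case $X = \Spec A$, $Y = \Spec B$, $S = \Spec R$. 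There the projections $A,B \hookrightarrow A \otimes_R B$ make $(X \times_S Y)_{\rm an} \to X_{\rm an} \times_{S_{\rm an}} Y_{\rm an}$ continuous. In the opposite direction, any $f \in A \otimes_R B$ is a finite sum $\sum a_i \otimes b_i$, and the induced map $(x,y)\mapsto \sum a_i(x)\,b_i(y)$ is continuous on the product topology because each $(a_i)_{\rm an}$ and $(b_i)_{\rm an}$ is; by the universal property of the initial topology this gives continuity of the inverse map.

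For a filtered projective system $(X_\lambda)$ with affine transition maps, the limit $X = \varprojlim X_\lambda$ exists in schemes (Stacks Project Tag 01YT), and since $\Spec \CC$ is quasi-compact and quasi-separated one has $X(\CC) = \varprojlim X_\lambda(\CC)$ (Stacks Project Tag 01ZC). To identify the topology, I would use that any affine open of $X_{\lambda_0}$ pulls back to an affine open $V_\lambda \subset X_\lambda$ for all $\lambda \geq \lambda_0$ (since transition maps are affine) and that $V \coloneqq \varprojlim_{\lambda \geq \lambda_0} V_\lambda$ is an affine open of $X$ with $V = V_{\lambda_0} \times_{X_{\lambda_0}} X$; such $V$'s cover $X$. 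This reduces the comparison to the affine case $X = \Spec A$ with $A = \varinjlim A_\lambda$ and $X_\lambda = \Spec A_\lambda$. Then the initial topology on $X(\CC)$ generated by $\{f_{\rm an} : f \in A\}$ coincides with the one generated by $\bigcup_\lambda \{f_{\rm an} : f \in A_\lambda\}$, which is tautologically the limit topology of $\varprojlim X_{\lambda,{\rm an}}$ pulled back through the projections.

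The only non-routine point is the reduction to the affine case in the limit statement, where one must verify that affine opens in $X$ compatible with the limit structure still cover $X$ and that the resulting topology comparison is local on such a cover. This follows from the standard limit-of-schemes theory (Stacks Project, Tags 01YT--01YV), after which both assertions collapse to the direct manipulations of initial topologies indicated above.
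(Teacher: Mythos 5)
Your proposal is correct and follows essentially the same route as the paper's proof: identify the underlying sets via the universal property, note that the canonical bijection is continuous in one direction, and reduce the reverse comparison of topologies to the affine case, where it comes down to continuity of $\sum a_i(x)b_i(y)$ for the fibre product and to the fact that every element of $\varinjlim A_\lambda$ factors through some stage for the limit. The extra detail you supply on the affine-local reduction (via Stacks Project limit theory) is a fleshing-out of a step the paper treats as routine, not a different argument.
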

\begin{proof}
 In order to show the first assertion, let $f\colon X \to S$, $g \colon Y \to S$ be morphisms of $\CC$-schemes. By the universal property of the fibre products, we have $(X \times_S Y)(\CC) = X(\CC) \times_{S(\CC)} Y(\CC)$ as sets. The canonical bijection $(X \times_S Y)_{\rm an} \to X_{\rm an} \times_{S_{\rm an}} Y_{\rm an}$ is induced by the continuous maps $f_{\rm an}$ and $g_{\rm an}$ and is thus continuous itself. Hence it remains to show that the topology on $X_{\rm an} \times_{S_{\rm an}} Y_{\rm an}$ is finer than the topology on $(X \times_S Y)_{\rm an}$, which may be checked after passing to an affine cover. Thus we assume that $X,Y$ and $S$ are affine. Then we have to show that for any global section $\sum (f_i \otimes g_i) \in \Oscr_{X \times_S Y}(X \times_{S} Y)$ the associated map $X_{\rm an} \times_{S_{\rm an}} Y_{\rm an} \to \CC, (x,y) \mapsto \sum f_i(x) g_i(y)$ is continuous. But this is true as it is an arithmetic expression of continuous functions.
 
 The second assertion is proven analogously to the first. Let $X  = \varprojlim X_\lambda$ be a limit of a projective system of $\CC$-schemes with affine transition maps. By the universal properties of limits, we have that $X(\CC) = \varprojlim X_\lambda(\CC)$ and that the canonical bijection $X_\an \to \varprojlim X_{\lambda,\an}$ is continuous. Thus it remains to show that the topology on $\varprojlim X_{\lambda,\an}$ is finer than the topology on $X_\an$, which may be proven under the assumption that all $X_\lambda$ (and thus $X$) are affine. Since $\Oscr_X(X) = \varinjlim \Oscr_{X_\lambda}(X_\lambda)$ for any $f \in \Oscr_X(X)$ the induced function $X_\an \to \CC$ factorizes as \[ X_\an \xrightarrow{can.} X_{\lambda, \an} \xrightarrow{f_{\lambda,\an}} \CC,\] where $f_\lambda \in \Oscr_{X_\lambda}(X_\lambda)$ and thus is continuous, finishing the proof.
 
 \end{proof}
 
\begin{remark}
 The above construction also holds when one replaces $\CC$ by an arbitrary topological field $k$.
\end{remark}

 We fix a separated scheme $X$ of finite expansion over $\CC$. We denote by $X_\et$ the (small) \'etale site over $X$ and by $\Top(X_\an)$ the site of local isomorphisms over $X_\an$.

 \begin{propdef} \label{propdef analytification}
  The analytification induces a morphism $\nu_X\colon \Top(X_\an) \to X_\et$ of sites, such that for any morphism of $\CC$-schemes $f\colon X \to S$  of finite expansion the diagram
  \begin{center}
   \begin{tikzcd}
    \Top(X_\an) \arrow{r}{\nu_X} \arrow{d}{f_\an} & X_\et \arrow{d}{f} \\
    \Top(S_\an) \arrow{r}{\nu_S} & S_\et
   \end{tikzcd}
  \end{center}
  commutes.
 \end{propdef}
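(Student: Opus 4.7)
The plan is to define the morphism of sites $\nu_X$ by specifying its inverse image functor $\nu_X^{-1}\colon X_\et \to \Top(X_\an)$, $(U \to X) \mapsto (U_\an \to X_\an)$, and to verify that it preserves fibre products and sends étale covers to covers in the local-isomorphism site. Both of these reduce to the single claim that $U_\an \to X_\an$ is a local homeomorphism whenever $U \to X$ is étale. Granted this, the commutativity of the displayed diagram is immediate from the compatibility of analytification with fibre products proved in the preceding lemma, since for every étale $V \to S$ one has canonical identifications $(V\times_S X)_\an \cong V_\an \times_{S_\an} X_\an$ compatible with the structure maps.

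To prove the local-homeomorphism claim we reduce to the classical finite type case by a limit argument. Since the property is local on the source, we may assume $U$ is affine, so that $U \to X$ is étale of finite presentation. By Proposition~\ref{prop temkin decomposition}, $X$ can be written as a cofiltered limit $X = \varprojlim_\lambda X_\lambda$ of $\CC$-schemes of finite presentation with finite (hence affine) transition maps. Standard limit arguments for morphisms of finite presentation (e.g.\ \cite[Tag~07RP]{AlgStackProj}) then yield an index $\lambda_0$ and an étale morphism $U_{\lambda_0} \to X_{\lambda_0}$ whose pullback along $X \to X_{\lambda_0}$ recovers $U \to X$. Artin's classical theorem for finite type complex schemes gives that $U_{\lambda_0,\an} \to X_{\lambda_0,\an}$ is a local homeomorphism. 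Invoking the preceding lemma twice, once for fibre products and once for the limit presentation, identifies $U_\an$ with $U_{\lambda_0,\an} \times_{X_{\lambda_0,\an}} X_\an$ as topological spaces, so that $U_\an \to X_\an$ is the pullback of a local homeomorphism and therefore itself a local homeomorphism.

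The remaining verifications are formal. Preservation of fibre products by $\nu_X^{-1}$ is the first assertion of the preceding lemma. Any étale cover $\{U_i \to U\}$ is sent to a jointly surjective family of local homeomorphisms, because $\CC$ is algebraically closed and hence every $\CC$-point of $U$ lifts to some $U_i$; this suffices for it to be a covering in $\Top(U_\an)$. The diagram commutativity then follows by applying $\nu_X^{-1}$ and $\nu_S^{-1}$ to an étale morphism $V \to S$ and comparing with the fibre product identification above. The main obstacle is the descent step, which crucially uses both Proposition~\ref{prop temkin decomposition} and the topological fact that local homeomorphism is stable under pullback; once the local-homeomorphism property is in hand everything else is bookkeeping.
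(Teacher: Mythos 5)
Your proposal is correct and follows essentially the same route as the paper: the heart of both arguments is that the analytification of an \'etale morphism is a local isomorphism, obtained by descending to the finite type case via a limit presentation and using that $(-)_{\an}$ commutes with fibre products and filtered limits with affine transition maps. The only cosmetic difference is that you establish exactness of $\nu_X^\ast$ through preservation of finite limits and covers, whereas the paper checks it on stalks by reducing to $X = \Spec \CC$; both are standard and the remaining bookkeeping is identical.
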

 \begin{proof}
  The main point is to show that the analytification of an \'etale morphism $g\colon U \to X$ is a local isomorphism. Since being \'etale is compatible with limits, $g$ is the base change of an \'etale morphism $g_0\colon X_0 \to Y_0$ of separated $\CC$-schemes of finite type. Since $(-)_\an$ commmutes with fibre products, the $g_\an$ is a local isomorphism as $g_{0,\an}$ is a local isomorphism (see e.g.\ \cite[Exp.~XI, 4.0]{SGA4.3}). 
  
 In other words, analytification is a continuous functor and hence $\nu_{X,\ast}$ and $\nu_X^\ast$ are defined. The commutativity of above diagram (as diagram of continuous functors) is easily checked. It remains to show that $\nu_X^\ast$ is exact. Since exactness can be checked on stalks, it suffices to show that for any $x \in X(\CC)$ and sheaf $\Fscr$ on $X_\et$, we have $(\nu_X^\ast \Fscr)_x = \Fscr_x$. By the commutativity of the above diagram this claim is reduced to the case $X = \Spec \CC$, where it is trivial.
 \end{proof}

 In order to compare compactly supported cohomology, we need the following lemma.
 \begin{lemma} \label{lemma GAGA proper}
  Let $f\colon X \to S$ be a universally closed morphism of separated $\CC$-schemes of finite expansion. Then $f_\an$ is proper.
 \end{lemma}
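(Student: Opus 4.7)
The plan is to apply Proposition~\ref{prop temkin decomposition}(i) to factor $f$ as $f = h \circ g$, where $g\colon X \to X_0$ is integral and $h\colon X_0 \to S$ is proper. Since the composition of proper maps of topological spaces is proper, it suffices to show that $h_{\rm an}$ and $g_{\rm an}$ are each proper.

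For $h_{\rm an}$, since $h$ is a proper morphism of finite type $\CC$-schemes, this is the classical analytic GAGA statement (e.g.\ \cite[Exp.~XII]{SGA4.3} or Serre's GAGA); note that $X_{0,\rm an}$ and $S_{\rm an}$ are locally compact Hausdorff because $X_0$ and $S$ are separated.

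The main content is therefore the integral case. The idea is to write $g = \varprojlim g_\lambda$ as a cofiltered limit of finite $X_0$-schemes $g_\lambda\colon X_\lambda \to X_0$ with finite (in particular affine) transition maps. By the preceding lemma, analytification commutes with such limits, so $X_{\rm an} = \varprojlim X_{\lambda,\rm an}$ as topological spaces and $g_{\rm an} = \varprojlim g_{\lambda,\rm an}$. Each $g_{\lambda,\rm an}$ is a finite map of finite type complex schemes, hence is proper in the topological sense.

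To conclude topological properness of $g_{\rm an}$, I would use that $X_{0,\rm an}$ is locally compact Hausdorff and argue that for every compact $K \subset X_{0,\rm an}$, the preimage
\[
 g_{\rm an}^{-1}(K) \;=\; \varprojlim_\lambda\, g_{\lambda,\rm an}^{-1}(K)
\]
is compact Hausdorff, being a cofiltered inverse limit of non-empty compact Hausdorff spaces (Tychonoff). Continuity is immediate, and closedness of $g_{\rm an}$ then follows from properness over compact neighbourhoods by a standard local argument. Combined with the proper case above, this yields properness of $f_{\rm an} = h_{\rm an} \circ g_{\rm an}$.

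The main obstacle is the integral step, and within it the delicate point is checking that the set-theoretic equality $g_{\rm an}^{-1}(K) = \varprojlim g_{\lambda,\rm an}^{-1}(K)$ actually holds \emph{as topological spaces}; this is where the preceding lemma, asserting that $(-)_{\rm an}$ commutes with limits along affine transition maps, is essential. Once that is in hand, the remainder is a soft compactness argument using only local compactness of the base and Tychonoff's theorem.
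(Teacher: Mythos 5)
Your proposal is correct and follows essentially the same route as the paper: Temkin's decomposition from Proposition~\ref{prop temkin decomposition}, the classical statement for proper finite type morphisms, and the fact that $(-)_{\rm an}$ commutes with cofiltered limits along affine transition maps, the paper merely writing $f$ itself as a limit of proper $S$-schemes rather than treating the proper and integral factors separately. The only cosmetic quibble is that non-emptiness of the spaces $g_{\lambda,\rm an}^{-1}(K)$ is neither needed nor guaranteed; compactness of the limit follows simply because it is a closed subspace of the Tychonoff product of compact Hausdorff spaces.
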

 \begin{proof}
  This is known to be true if $f$ is proper. Since we can write $f$ as limit of proper morphisms by Temkin's decomposition of universally closed morphisms (Proposiion~\ref{prop temkin decomposition}), the claim follows.
 \end{proof}

  Given a diagram as in Proposition~\ref{propdef analytification} we obtain an exchange morphism of functors on  $K \in \D^+_{\rm tors}(X,\ZZ_X)$
  \begin{equation}
   \nu_S^\ast \R f_\ast \to \R f_{\an, \ast} \nu_X^\ast \label{BC normal}
  \end{equation}
  If $X$ and $S$ are of separated finite type complex schemes and $f$ is proper, Artin's comparison theorem states that (\ref{BC normal}) is an isomorphism. As a consequence, one obtains an isomorphism
  \begin{equation}
   \nu_S^\ast \R f_! \to \R f_{\an, !} \nu_X^\ast. \label{BC shriek}
  \end{equation}
  whenever $f$ is an morphism of finite type schemes over $\CC$. The result can be generalised to schemes of finite expansion.
  
  \begin{theorem}
   Let $f\colon X \to S$ be a universally closed morphism of separated $\CC$-schemes of finite expansion. Then (\ref{BC normal}) is an isomorphism.
  \end{theorem}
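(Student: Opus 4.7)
My approach is to reduce to two cases via Temkin's decomposition and handle each with separate techniques.

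\emph{Reduction step.} By Proposition~\ref{prop temkin decomposition} we may factor $f = h \circ g$ with $g$ integral and $h$ proper. Since the exchange morphism (\ref{BC normal}) is compatible with the Leray spectral sequences on both the \'etale and analytic sides, it suffices to prove that (\ref{BC normal}) is an isomorphism for $g$ and for $h$ separately.

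\emph{Integral case.} The fibres of $g$ have dimension zero, so by Lemma~\ref{lemma cd of higher direct image} one has $\R g_\ast = g_\ast$. On the analytic side, $g_\an$ is proper by Lemma~\ref{lemma GAGA proper}, and each fibre $g_\an^{-1}(s)$ is discrete, since it equals the set of $\CC$-points of $X_s = \Spec(A \otimes_{\Oscr_{S,s}} \CC)$, where $A \otimes_{\Oscr_{S,s}} \CC$ is integral over $\CC$ and thus a product of copies of $\CC$. A proper continuous map with discrete fibres has vanishing higher direct images, so also $\R g_{\an,\ast} = g_{\an,\ast}$. The isomorphism can then be checked on stalks at each $s \in S(\CC)$: by Proposition~\ref{prop proper BC} the left stalk is $\prod_{x \in X_{\bar s}} \Fscr_x$, while by properness of $g_\an$ the right stalk is $\prod_{x \in g_\an^{-1}(s)} (\nu_X^\ast \Fscr)_x$; these products agree since $X_{\bar s}$ and $g_\an^{-1}(s)$ coincide as sets and $\nu_X^\ast$ preserves stalks by Proposition~\ref{propdef analytification}.

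\emph{Proper case.} Applying Proposition~\ref{prop temkin decomposition} to the structure morphisms of the source and target of $h$, we present both as cofiltered limits of finite-type $\CC$-schemes with affine transition maps and approximate $h$ by a proper morphism $h_0 \colon X_0' \to S_0$ between finite-type $\CC$-schemes. Any torsion sheaf on $X$ is a filtered colimit of constructible sheaves, and both sides of (\ref{BC normal}) commute with such colimits (by properness of $h$ and $h_\an$), so we may assume $\Fscr$ is constructible and descends to a sheaf $\Fscr_0$ on $X_0'$. Proper base change applied to the Cartesian square relating $h$ to $h_0$ identifies $\R h_\ast \Fscr$ with the pullback of $\R h_{0,\ast} \Fscr_0$. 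The analogous identification on the analytic side follows from $S_\an = \varprojlim S_{\mu,\an}$ (via Lemma~\ref{lemma GAGA proper} and the commutation of $(-)_\an$ with affine cofiltered limits) together with classical topological proper base change applied to $h_{0,\an}$. Finally, Artin's classical comparison theorem for the finite-type proper morphism $h_0$ yields the desired isomorphism after pullback.

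\emph{Main obstacle.} The most delicate point is the compatibility of $\R h_{\an,\ast}$ with the cofiltered limit $S_\an = \varprojlim S_{\mu,\an}$ in the proper case; this requires careful use of topological proper base change on the site $\Top(X_\an)$ of local isomorphisms, where stalks must be computed via local-isomorphism neighbourhoods rather than open subsets. In particular, one must verify that the constructible sheaf $\nu_X^\ast \Fscr$ on $\Top(X_\an)$ is genuinely the pullback of its analogue on $\Top(X'_{0,\an})$, so that the classical topological proper base change applies verbatim at each finite-type stage.
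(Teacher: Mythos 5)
Your overall strategy (Temkin's decomposition followed by Leray, treating the proper and the integral factor separately) is workable, and your proper case is essentially the paper's argument: reduce to constructible coefficients, descend $h$ and $\Fscr$ to a proper morphism $h_0$ of finite-type $\CC$-schemes, and combine algebraic proper base change, topological proper base change and Artin's classical comparison theorem. The paper organises the reduction a little differently (it proves the proper case first and then writes $X$ as a cofiltered limit of proper $S$-schemes $X_\lambda$, comparing $\varinjlim_\lambda \R f_{\lambda,\ast}f_\lambda^\ast$ with $\R f_\ast f^\ast$ on both sides via \cite[Exp.~VI,~Prop.~8.5.3]{SGA4.2} and its topological analogue), but that difference is cosmetic.

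Your integral case, however, contains a genuine error. The fibres of an integral morphism of finite expansion are in general \emph{profinite} (compact and totally disconnected), not discrete: an algebra integral over $\CC$ need not be a finite product of copies of $\CC$. For instance $A=\varinjlim_n \CC[x]/(x^{2^n}-1)$ is integral over $\CC$, and since $(-)_{\an}$ commutes with cofiltered limits along affine transition maps, $(\Spec A)_{\an}=\varprojlim_n \mu_{2^n}(\CC)$ is a Cantor set, nowhere discrete. Consequently the stalk of $g_\ast\Fscr$ at a geometric point is the group of sections over the profinite fibre --- a filtered colimit of finite products indexed by clopen decompositions --- and not the full product $\prod_x\Fscr_x$; the same applies on the analytic side, so your identification of the two stalks as equal products is unjustified as written (as is the vanishing of higher direct images ``because the fibres are discrete''). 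The conclusion is still true, but the honest route is the limit argument: write $g$ as a cofiltered limit of \emph{finite} morphisms $g_\lambda$, observe that both $\R g_\ast g^\ast$ and $\R g_{\an,\ast}g_{\an}^\ast$ are the filtered colimits of the corresponding finite-level functors, and invoke the finite-type comparison for each $g_\lambda$. This is exactly how the paper disposes of the non-finite-type part, and it also renders your worry about stalks on the site of local isomorphisms moot, since everything is checked at finite level where the classical theory applies.
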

  \begin{proof}
   Since $\R f_\ast$ commutes with filtered colimits it suffices to check that (\ref{BC normal}) is an isomorphism for bounded complexes of constructible sheaves $K \in \D^b_{{\rm tors},c}(X,\ZZ_X)$.
  
  We first assume that $f$ is a proper morphism. Then both $f$ and $K$ are defined over finite type complex schemes, i.e. there exists a commutative cube
  \begin{center}
   \begin{tikzcd}[row sep={20}, column sep={20}]
    & X_{\an} \ar{rr}\ar{dd}{g_\an} \ar{dl}[swap]{f_\an} & & X \ar{dd}{g} \ar{dl}{f} \\
    S_{\an} \ar[crossing over]{rr} \ar{dd}[swap]{g'_\an} & & S  \\
    &  X'_{\an}  \ar{rr}\ar{dl}[swap]{f'_\an}  & &  X' \ar{dl}{f'} \\
    S'_\an \ar{rr} && S'  \ar[<-,crossing over]{uu}[swap]{g'},
   \end{tikzcd}
  \end{center}
 where the squares on the left and right side are Cartesian and moreover there exists a $K' \in \D^b_{{\rm tors},c}(X',\ZZ_{X'})$ such that $K = g^\ast K'$. We denote $h \coloneqq g \circ \nu_X$ and $h' \coloneqq g' \circ \nu_S$.  The exchange morphisms give rise to a commutative diagram
 \begin{center}
  \begin{tikzcd}
   \nu_S^\ast g'^\ast \R f'_\ast \arrow{r}{\sim} \arrow[equal]{d}& \nu_S^\ast \R f_\ast g^\ast \arrow{r} & \R f_{\rm an,\ast}  \nu_X^\ast g^\ast \arrow[equal]{d} \\
   h'^\ast \R f'_\ast \arrow[equal]{d} \arrow{rr} & & \R f_{\an,\ast} h^\ast \arrow[equal]{d} \\
   g_{\an}^\ast \nu_S^\ast \R f_\ast' \arrow{r}{\sim} & g_{\an}^\ast \R f_{\an,\ast} \nu_X^\ast \arrow{r}{\sim} & \R f_{\an,\ast} g_{\an}^\ast \nu_X^\ast.
  \end{tikzcd}
 \end{center}
 Here the bottom left morphism is an isomorphism by Artin's comparison theorem for complex varieties, the bottom right morphism is an isomorphism by the proper base change theorem in topology and the top left morphism is an isomorphism by proper base change theorem in algebraic geometry. Thus the top right morphism is also an isomorphism, which we wanted to show.

 Now consider the general case that $f$ is separated universally closed. We write $X = \varprojlim X_\lambda$ as limit of proper $S$-schemes and denote by $f_\lambda\colon X_\lambda \to S$ and $g_\lambda\colon X \to X_\lambda$ the obvious morphisms. We choose $\lambda_0$ such that there exists a $K' \in \D^+_{{\rm tors},c} (X_{\lambda_0},\ZZ_{X_{\lambda_0}})$ such that $K = g_{\lambda_0}^\ast K$. Now the exchange morphisms give rise to a commutative diagram
 \begin{center}
 \begin{tikzcd}
  \nu_S^\ast \R f_{\an,\ast} \arrow[equal]{d} \arrow{rr} & & \nu_X^\ast \R f_{\an,\ast} \nu_X^\ast \arrow[equal]{d} \\
  \nu_S^\ast \R g_{\lambda_0,\ast} \R f_{\lambda_0,\ast} \arrow{r} & \R g_{\lambda_0,\an,\ast} \nu_{X_{\lambda_0}}^\ast \R f_{\lambda_0,\ast} \arrow{r} & \R g_{\lambda_0,\an,\ast} \R f_{\lambda_0,\an,\ast} \nu_X^\ast .
 \end{tikzcd}
 \end{center}
 Here the bottom left morphism is an isomorphism by our considerations above. Thus we assume $S=X_{\lambda_0}$ from now on. We obtain a commutative diagram of functors
 \begin{center} 
  \begin{tikzcd}
   \varinjlim \nu_S^\ast \R f_{\lambda,\ast} f_\lambda^\ast = \nu_S^\ast  \varinjlim \R f_{\lambda,\ast} f_\lambda^\ast \ar{r} \ar{d} & \nu_S^\ast \R f_\ast f^\ast \ar{d} \\
   \varinjlim \R f_{\lambda,\an,\ast} \nu_{X_\lambda}^\ast f_\lambda^\ast = \varinjlim \R f_{\lambda,\an,\ast} f_{\lambda,\an}^\ast \nu_{X_\lambda}^\ast \ar{r} & \R f_{\an, \ast} f_{\an}^\ast \nu_{S,\ast}
  \end{tikzcd}
 \end{center}
 The horizontal morphisms are isomorphisms by \cite[Exp.~VI,~Prop.~8.5.3]{SGA4.2} and the morphism on the left is the exchange morphism (\ref{BC normal}) for $f_\lambda$, which we have shown to be an isomorphism above. Thus the morphism on the right, the exchange morphism for $f$, must also be an isomorphism.
 \end{proof}
  
 \begin{corollary}
  Let $f\colon X \to S$ be a morphism of separated complex schemes of finite expansion. Then we obtain a natural isomorphism $\nu_S^\ast \R f_! \to \R f_{\an, !} \nu_X^\ast$, in particular we have $\H_c^q(X_{\an},\nu_X^\ast \Fscr) \cong \H_c^q(X,\Fscr)$ for every $q \in \NN_0$ and every torsion sheaf $\Fscr$ on the \'etale site of $X$.
 \end{corollary}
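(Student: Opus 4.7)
The plan is to reduce the general statement to the universally closed case treated in the previous theorem by using the compactification of $f$. By Theorem~2.12 (applied to $f$ itself), we may write $f = p \circ j$ where $j\colon X \hookrightarrow \Xbar$ is an open immersion and $p\colon \Xbar \to S$ is separated universally closed. By definition, $\R f_! = \R p_\ast \circ j_!$. On the topological side, the analytification is functorial, so $f_\an = p_\an \circ j_\an$; moreover $p_\an$ is proper by Lemma~3.5, and $j_\an$ is an open embedding (being a local isomorphism that is injective), so $\R f_{\an,!} = \R p_{\an, \ast} \circ j_{\an, !}$. It therefore suffices to produce a natural isomorphism
\[
 \nu_S^\ast \R p_\ast j_! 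K \isom \R p_{\an,\ast} j_{\an, !} \nu_X^\ast K
\]
for $K \in \D^+_{\rm tors}(X,\ZZ_X)$.

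I would construct this isomorphism in two steps. First, applying the preceding theorem to the universally closed morphism $p$ and the complex $j_! K \in \D^+_{\rm tors}(\Xbar,\ZZ_{\Xbar})$ yields a natural isomorphism $\nu_S^\ast \R p_\ast (j_! K) \isom \R p_{\an, \ast} \nu_{\Xbar}^\ast (j_! K)$. Second, $\nu^\ast$ commutes with extension by zero along an open immersion: the natural map $j_{\an,!} \nu_X^\ast K \to \nu_{\Xbar}^\ast j_! K$ is an isomorphism because both sides have the same stalks, namely $K_x$ at a point $x \in X_\an$ and $0$ at a point of $\Xbar_\an \setminus X_\an$, and $\nu^\ast$ preserves stalks by Proposition~3.4. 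Composing these two isomorphisms produces the desired map, and replacing $K$ by $\Fscr$ concentrated in degree zero and taking cohomology gives the stated identification of $\H^q_c$.

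Finally, to justify that this construction is really the arrow $\nu_S^\ast \R f_! \to \R f_{\an,!} \nu_X^\ast$ obtained by composing the pointwise exchange morphisms, and hence is independent of the choice of compactification, one appeals to the coherence statement of Theorem~2.8: the compositions $c_{p,j}$ together with the base-change isomorphism (\ref{BC normal}) for $p$ and the natural map $j_{\an,!} \nu_X^\ast \isom \nu_{\Xbar}^\ast j_!$ patch together into a morphism of functors that is characterized by its definition on the building blocks $p$ (universally closed) and $j$ (open immersion). The main obstacle is really only this coherence bookkeeping; the geometric content is entirely concentrated in the previous theorem, while the open-immersion part is formal from the description of stalks.
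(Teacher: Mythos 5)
Your proof is correct and follows essentially the same route as the paper: choose a compactification $f = \fbar \circ j$, apply the preceding theorem to the universally closed part, and use the stalkwise identification $j_{\an,!}\,\nu_X^\ast \cong \nu_{\Xbar}^\ast\, j_!$ (which the paper leaves as ``one easily checks''). Your additional remarks on properness of $p_\an$ and on independence of the compactification are sound but not needed beyond what the paper records.
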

 \begin{proof}
  We choose $f = \fbar \circ j$, where $j\colon X \mono \Xbar$ is an open embedding and $\fbar\colon \Xbar \to S$ universally closed. One easily checks that $j_{\an,!} \circ \nu_{X} \cong \nu_{\Xbar} \circ j_!$ and thus
  \[
   \nu_S^\ast \R f_! = \nu_S^\ast \R \fbar_\ast j_! \cong \R \fbar_{\an,\ast} \nu_{\Xbar}^\ast j_! \cong  \R \fbar_{\an,\ast}  j_{\an,!} \nu_{X}^\ast = \R f_{\an, !} \nu_{X}^\ast. 
  \]
 \end{proof}

\def\cprime{$'$}
\providecommand{\bysame}{\leavevmode\hbox to3em{\hrulefill}\thinspace}
\providecommand{\MR}{\relax\ifhmode\unskip\space\fi MR }
\providecommand{\MRhref}[2]{%
  \href{http://www.ams.org/mathscinet-getitem?mr=#1}{#2}
}
\providecommand{\href}[2]{#2}

\end{document}